\newcommand{\R}{{\mathbb{R}}}
\newcommand{\C}{\mathbb{C}}
\newcommand{\N}{\mathbb{N}}
\newcommand{\Z}{\mathbb{Z}}
\newcommand{\T}{\mathbb{T}}
\newcommand{\D}{\mathbb{D}}
\newcommand{\LongDiscussion}[1]{}
\newtheorem{theorem}{Theorem}[section]
\newtheorem{proposition}[theorem]{Proposition}
\newtheorem{lemma}[theorem]{Lemma}
\newtheorem{corollary}[theorem]{Corollary}
\theoremstyle{definition}
\newtheorem{definition}[theorem]{Definition}
\theoremstyle{remark}
\newtheorem{remark}[theorem]{Remark}
\newcommand{\citet}[1]{\cite{#1}}
\numberwithin{equation}{section}                
\newcommand{\clos}[1]{\overline{#1}}
\newcommand{\Sector}[1]{\Sigma_{#1}}
\renewcommand{\Re}{{\rm Re\,}}
\renewcommand{\Im}{{\rm Im\,}}
\newcommand{\rst}[1]{|{#1}}
\newcommand{\Lscr} {{\mathcal L}}
\newcommand{\sbm}[1]{\left[\begin{smallmatrix} #1
             \end{smallmatrix}\right]}
\renewcommand{\p@enumii}{}
\let\oldlabel=\label
\renewcommand{\label}[1]{\smash{\raise 10pt\llap{\fbox{\scriptsize
#1}}}\oldlabel{#1}}
\newcommand{\mathlabel}[1]{\smash{\raise 9pt\llap{\scriptsize
(#1)}}\label{#1}}
\renewcommand{\label}[1]{\oldlabel{#1}}
\renewcommand{\mathlabel}[1]{\label{#1}}
\newcommand{\BLO}{\Lscr}
\newcommand{\abs}[1]{\mathopen\vert #1 \mathclose\vert}
\newcommand{\norm}[1]{\mathopen\Vert #1 \mathclose\Vert}
\newcommand{\ConsiderThis}[1]{}
\newcommand{\leaveout}[1]{}
\def\norm#1{\|#1\|}
\title{Microspectral analysis of \\  quasinilpotent operators}
\author{Jarmo Malinen, Olavi Nevanlinna, and Jaroslav Zem\'anek}
\address{Jarmo Malinen,
Department of Mathematics and Systems Analysis,
Aalto University,
P.O.Box 11100,
00076 AALTO, Finland.
{\tt Jarmo.Malinen@aalto.fi.}}
\begin{document}

\begin{abstract}
  We develop a microspectral theory for quasinilpotent linear
  operators $Q$ (i.e., those with $\sigma(Q) = \{ 0 \}$) in a Banach
  space.  When such $Q$ is not compact, normal, or nilpotent, the
  classical spectral theory gives little information, and a somewhat
  deeper structure can be recovered from microspectral sets in
  $\C$. Such sets describe, e.g., semigroup generation, resolvent
  properties, power boundedness as well as Tauberian properties
  associated to $zQ$ for $z \in \C$.
\end{abstract}

\keywords{Quasinilpotent, power-bounded, semigroup, microspectral.}

\subjclass{47A10, 47B06, 47B10, 47B60.}


\bibliographystyle{amsalpha}

\maketitle

\pagestyle{myheadings}
\thispagestyle{plain}
\markboth{J. MALINEN, O. NEVANLINNA, AND J. ZEM\'ANEK}{MICROSPECTRAL ANALYSIS}



\section{\label{IntroSec} Introduction}

Let $T$ be a bounded linear operator on a complex Banach space $X$
with its spectrum denoted by $\sigma(T)$. Local spectral theory deals
with the local resolvent
\begin{equation*}
  \lambda \mapsto (\lambda - T)^{-1} x, \quad x \in X.
\end{equation*}
The domain and the analytic properties of such functions depend on the
choice of the vector $x$; see \cite{N-R-R:WRLO,F-N-R-R:WRLOII} for
background in this area.  In this paper we discuss related
microspectral questions.

Let $\lambda_0 \in \sigma(T)$ be an isolated point. If $\lambda_0$ is
a \emph{pole} of the global resolvent
\begin{equation*}
  \lambda \mapsto (\lambda - T)^{-1}, \quad \lambda \notin \sigma(T),
\end{equation*}
the the resolvent appears of the same size when approaching the
singularity $\lambda_0$ from all directions; see Proposition
\ref{AlgebraicOperatorProp} below. If, however, $\lambda_0$ is an
\emph{essential singularity}, then the growth depends on the direction
from which the singularity is approached. In order to study this in a
more detailed manner, we proceed as follows:

Let $P_0$ be the Riesz spectral projection of $X$ to the invariant
subspace with respect to $\lambda_0 \in \sigma(T)$, and define $X_0 :=
P X$ and $T_0 = T\rst{X_0}$. Then clearly $Q := T_0 - \lambda_0$ is a
quasinilpotent operator on $X_0$. Rather than studying $(\lambda -
Q)^{-1}$ for $\lambda \neq 0$, we change the variable and consider the
entire function
\begin{equation} \mathlabel{IntroResolvent}
  z \mapsto (I - z Q)^{-1}, \quad z \in \C.
\end{equation}
Refined spectral information for $Q$ can be obtained from the mapping
\begin{equation*}
  j \mapsto (I + z Q)^{j}, \quad j \in \N := \{1, 2, \ldots \}
\end{equation*}
in terms of the set
\begin{equation}
    \mathlabel{PowerBoundedSet} 
    \mathcal B_{Q} := \{ z \in \C: \exists N_z < \infty \text{ such that }
    \norm{(1 + z Q)^{j}} \leq N_z \text{ for all } j \in \N \}; 
\end{equation}
i.e., our requirement is the power-boundedness at the point $z$ with
the bound $N_z$ that possibly depends on $z \in \mathcal B_{Q}$. It is
easy to see that always $0 \in \mathcal B_{Q}$ and that $ \mathcal
B_{Q}$ is convex by the binomial formula for commuting operators.

In addition to the power-bounded set $\mathcal B_{Q}$, a number of
additional sets in $\C$ are defined in \eqref{ResolventCondSet} --
\eqref{TauberianSet} below. Studying these sets is a powerful tool for
understanding the asymptotic behaviour of powers of the families of
operators
\begin{equation} \mathlabel{OperatorFamilies}
 T(z) := I + zQ \quad \text{ and } \quad T_z := (I - zQ)^{-1} 
\end{equation}
for complex $z$. Intuitively speaking, the proposed
\emph{microspectal analysis} amounts to looking at $Q$ from all
possible directions and using all possible magnifying glasses. 

We shall see below in Section \ref{InteriorSec} that topological
properties of a point $z_0$ with respect to the set $\mathcal B_{Q}$
(such as $z_0 \in \mathcal B_Q^{\circ}$, the open interior of
$\mathcal B_Q$) correspond to additional properties of powers
$T(z)^{j}$ (such as growth condition on their consecutive differences
$(I - T(z)) T(z)^{j}$) or the analytic properties of the resolvent
\eqref{IntroResolvent} (like the Ritt resolvent condition).

In particular, it is known that the differences of consecutive powers
$(I - T(z)) T(z)^{j}$ cannot decay arbitrarily fast since either
$\liminf_{j \to \infty}{(j + 1)\norm{(I - T(z)) T(z)^{j}}} \geq 1/e$ or
$Q = 0$; see \cite{JE:QRCIPCBA,MB:IPSAB,K-M-O-T:PBORNE,M-N-Y:LBOXX}.
The question arises whether $z \in \mathcal B_Q$ if the fastest
possible speed of decay is attained, i.e., 
\begin{equation} \mathlabel{TauberianConditionT}
  \sup_{j \geq 1}{(j + 1)\norm{(I - T(z)) T(z)^{j}}} < \infty.  
\end{equation}
Theorem \ref{PhragmenLindelofMainThm} gives an affirmative answer for
\emph{real} operators.

\subsection*{Notation}

The bounded linear operators in a Banach space $X$ are denoted by
$\BLO(X)$. The norm of $X$ and induced operator norm of $\BLO(X)$ are
both denoted by $\norm{\cdot}$. Throughout this paper we assume that
$Q \in \BLO(X)$ with $\sigma(Q) = \{0 \}$.

The natural numbers are $\N := \{1, 2, \ldots \}$.  The complex plane
and the real axis are denoted by $\C$ and $\R$, respectively.  For any
set $A \subset \C$, we denote by $\overline{A}$, $A^c$, $\partial A$,
and $A^\circ$ the closure, complement, boundary, and the (open)
interior of $A$, respectively.  The positive real axis is denoted by
$\R_+ = (0, \infty)$ with $\clos{\R}_+ = [0, \infty)$, and
$\D_{z_0,r} := \{ z \in \C : \abs{ z - x_0} < r \}$. We define the
unit disc $\D := \D_{0,1}$ and its boundary, the unit circle $\T :=
\partial \D$.  If $A,B \subset \C$, we define their product set by $
AB := \{ z s : z \in A \text{ and } s \in B\}$. We say that a set $A
\subset \C$ is \emph{star-like} or that it \emph{consists of full
  rays} if
\begin{equation*}
  A (0,1] \subset A \quad \text{ or }\quad A \R_+ \subset A, 
\end{equation*}
respectively. Note that the set $\{0\}$ satisfies both of these
conditions. 

\emph{Sectors} are convex sets that consist of full rays.  We denote
the \emph{balanced open sectors} in $\C$ by
  \begin{equation} \mathlabel{BalancedOpenSector}
    \Sector{\theta} 
    := \{ r e^{i \theta_0} : 
    r > 0 \text{ and } \theta_0 \in (-\theta, \theta)  \}
\quad \text{ for } \quad  0 < \theta < \pi .
  \end{equation}
  We write $\C_+ := \Sector{\pi/2}$.  General open sectors are the sets
  $e^{i \phi} \Sector{\theta}  $ for $\phi \in [-\pi,\pi)$ and $\theta \in
  (0,\pi)$. The central angle of $e^{i \phi} \Sector{\theta}$ is defined as
  $2 \theta$.  Closed sectors are closures of open sectors or rays
  $e^{i \phi} \clos{\R}_+$ for some $\phi \in [-\pi,\pi)$.

\newpage

\section{\label{BasicSec} Elementary properties}
In addition to the set $\mathcal B_Q$ already introduced in
\eqref{PowerBoundedSet}, we also consider the sets
\begin{align} 
  \mathlabel{ResolventCondSet} \mathcal A_{Q}^k & := \{ z \in \C :
  \limsup_{s \to +\infty}{ \norm{(1 - s z Q)^{-k}}} < \infty \}
  \quad \text{ for } \quad k \in \N,   \\
  \mathlabel{HilleYoshidaSet} \mathcal G_{Q} & := \{ z \in \C :
  \limsup_{t \to +\infty}{ \norm{e^{t z Q}}} < \infty\}, \\
  \mathlabel{KreissSet} \mathcal K_{Q} & := \{ z \in \C:
  \sup_{\Re s > -1/2}{(\abs{s + 1} - \abs{s})\norm{(1 - szQ)^{-1}}} < \infty  \} , \\
  \mathlabel{IteratedKreissSet} \mathcal K_{Q}^\infty & := \{ z \in \C:
  \sup_{\Re s > -1/2, k \in \N}{(\abs{s + 1} - \abs{s})^k\norm{(1 - szQ)^{-k}}} < \infty  \} , \\
  \mathlabel{RittSet} \mathcal R_{Q} & := \{ z \in \C: \sup_{\Re s >
    -1/2}{\norm{(1 - szQ)^{-1}}} < \infty \},
  \quad \text{ and } \\
  \mathlabel{TauberianSet} \mathcal T_{Q}^\alpha & := \{ z \in \C:
  \limsup_{j \to \infty}{(j+1)^\alpha \norm{zQ (1 + z Q)^j}} < \infty
  \} \quad \text{ for } \quad \alpha > 0
\end{align}
as well as the constants defined by
\begin{align*}
  M_z & := \sup_{j \geq 0}{(j + 1) \norm{z Q (1 + z Q)^j}}
  \quad \text{ for } \quad z \in \mathcal T_Q,  \quad \text{ and } \\
  N_z & := \sup_{j \geq 0}{\norm{(1 + z Q)^j}} \quad \text{ for }
  \quad z \in \mathcal B_Q.
\end{align*}
We shall abbreviate $\mathcal T_{Q}^1 = \mathcal T_{Q}$ and $\mathcal
A_{Q}^1 = \mathcal A_{Q}$. If $Q = 0$, then all these sets coincide
with $\C$.

The set $\mathcal A_{Q}^k$ is referred to as the \emph{Abel set} of
order $k$ for obvious reasons.  The \emph{Kreiss set} is so defined
that $z \in \mathcal K_Q$ if and only if $T(z)$ in
\eqref{OperatorFamilies} satisfies the Kreiss resolvent condition
$\norm{(\lambda - T(z))^{-1}} \leq M/(\abs{\lambda} - 1)$ for all
$\abs{\lambda} > 1$. Similarly, the set $\mathcal K_Q^\infty$ relates
to the iterated Kreiss condition $\norm{(\lambda - T(z))^{-k}} \leq
M/(\abs{\lambda} - 1)^k$ for $k \in \N$. For the \emph{Ritt set} we
have $z \in \mathcal R_Q$ if and only if $\norm{(\lambda - T(z))^{-1}}
\leq M/(\abs{\lambda - 1})$ for all $\abs{\lambda} > 1$. Out of the
\emph{Tauberian sets} $\mathcal T_{Q}^\alpha$, only the cases $\alpha
= 1/2$ and $\alpha = 1$ will be studied in this paper, and they
correspond to the differences of consecutive powers of $T(z)$.

Before going any further, let us give examples:
\begin{remark}
  Suppose $Q = \left [ \begin{smallmatrix} 0 & 1 \\ 0 & 0
    \end{smallmatrix} \right ]$.  We get $(1 + zQ)^{k} = \left [
    \begin{smallmatrix} 1 & k z \\ 0 & 1
  \end{smallmatrix} \right ]$, $(1 - zQ)^{-k} = (1 + zQ)^k$, and $e^{z Q} = \left [
    \begin{smallmatrix} 1 & z \\ 0 & 1
  \end{smallmatrix} \right ]$.
It follows $\mathcal A_Q^k = \mathcal B_{Q} = \mathcal G_{Q} =  \{ 0
\}$ for any $k$.  Moreover, $z Q (1 + zQ)^{k} = \left [
    \begin{smallmatrix} 0 &  z \\ 0 & 0
  \end{smallmatrix} \right ]$, and thus $\mathcal T_{Q} = \mathcal T^{1/2}_{Q} = \{ 0 \}$.
\end{remark}
The higher dimensional Jordan matrices and even all \emph{algebraic}
(quasi)nilpotent operators have exactly the same properties:
\begin{proposition} \label{AlgebraicOperatorProp}
   Let $Q \in \BLO(X)$, $Q \neq 0$, be a nilpotent operator. Then
   $\mathcal A_Q = \mathcal B_Q = \mathcal G_Q = \mathcal T^{1/2}_{Q}
   = \{ 0 \}$.
\end{proposition}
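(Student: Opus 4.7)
The plan is to exploit the fact that nilpotency converts the infinite series defining resolvents, exponentials, and powers into finite polynomial expressions, whose leading term blows up polynomially with the parameter.

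Let $n \geq 2$ be the nilpotency index, so $Q^n = 0$ but $Q^{n-1} \neq 0$ (the case $n=1$ is excluded by $Q \neq 0$). Since $0$ lies in every set in question, it suffices to show that every $z \neq 0$ fails to belong to $\mathcal A_Q$, $\mathcal B_Q$, $\mathcal G_Q$, and $\mathcal T^{1/2}_Q$. Fix such a $z$ and expand via the binomial, Neumann, or exponential series, truncated at index $n-1$ because $Q^n = 0$:
\begin{align*}
  (1 + zQ)^j &= \sum_{k=0}^{n-1} \binom{j}{k} z^k Q^k, \\
  (1 - szQ)^{-1} &= \sum_{k=0}^{n-1} s^k z^k Q^k, \\
  e^{tzQ} &= \sum_{k=0}^{n-1} \frac{(tz)^k}{k!}\, Q^k.
\end{align*}
In each expression the coefficient of the leading $Q^{n-1}$ term is polynomial in the parameter of degree $n-1 \geq 1$, while all lower-order terms grow strictly slower. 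Thus, choosing any functional $\varphi$ with $\varphi(Q^{n-1} u) \neq 0$ for some unit vector $u$ and applying it to the expansions, one obtains scalar polynomials of exact degree $n-1$ in $j$, $s$, or $t$. Consequently $\norm{(1+zQ)^j} \to \infty$ as $j \to \infty$, $\norm{(1-szQ)^{-1}} \to \infty$ as $s \to +\infty$, and $\norm{e^{tzQ}} \to \infty$ as $t \to +\infty$, which rules out membership in $\mathcal B_Q$, $\mathcal A_Q$, and $\mathcal G_Q$ respectively.

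For the Tauberian set $\mathcal T^{1/2}_Q$ the argument is analogous but requires care because the prefactor $zQ$ absorbs one power of $Q$: using $Q^n = 0$,
\begin{equation*}
  zQ(1 + zQ)^j = \sum_{k=0}^{n-2} \binom{j}{k} z^{k+1} Q^{k+1},
\end{equation*}
so $\norm{zQ(1+zQ)^j}$ grows like $j^{n-2}$ for $z \neq 0$. Then $(j+1)^{1/2}\norm{zQ(1+zQ)^j}$ grows like $j^{n-3/2}$, and since $n-3/2 \geq 1/2 > 0$ whenever $n \geq 2$, this is unbounded, so $z \notin \mathcal T^{1/2}_Q$.

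The only delicate point is the borderline case $n = 2$: then $zQ(1+zQ)^j = zQ$ is independent of $j$, but the $(j+1)^{1/2}$ weight still forces divergence, which is exactly where the specific exponent $\alpha = 1/2$ is used. In all other cases the blow-up comes from the polynomial growth of the binomial, power, or exponential coefficient of $Q^{n-1}$, and the proof reduces essentially to a degree count.
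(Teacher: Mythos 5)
Your proof is correct, and it rests on the same central idea as the paper's: expand the relevant series to a polynomial in $Q$ using nilpotency, and observe that the dominant term forces polynomial growth in the parameter $j$, $s$, or $t$. The differences are in mechanics and organization. To isolate the dominant term you pair with a functional $\varphi$ satisfying $\varphi(Q^{n-1}u) \neq 0$ and count degrees of the resulting scalar polynomials, then handle all four sets $\mathcal A_Q$, $\mathcal B_Q$, $\mathcal G_Q$, $\mathcal T^{1/2}_Q$ directly and in the same way. The paper instead argues directly only for $\mathcal T^{1/2}_Q$ (and $\mathcal A_Q$), extracting the leading term via a convex-combination normalization by $\sum_{k}\binom{j}{k-1}$ rather than a functional, and then derives $\mathcal B_Q = \{0\}$ and $\mathcal G_Q = \{0\}$ from the inclusion $\mathcal B_Q[0,1) \subset \mathcal T^{1/2}_Q$ and the chain $\mathcal B_Q \subset \mathcal G_Q \subset \mathcal A_Q$ of Theorem~\ref{InclusionThm}. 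Your version is self-contained and slightly more elementary, avoiding a forward reference to Theorem~\ref{InclusionThm}; the paper's version is shorter at the cost of that dependency, and its normalization argument yields a clean quantitative lower bound $\norm{(I-T(z))T(z)^j x} > \tfrac12 \binom{j}{n-1}\norm{z^n Q^n x}$ that holds for every $x$ with $Q^n x \neq 0$, whereas your method only produces a bound tied to a fixed pair $(\varphi,u)$ — which is enough here but gives less uniform information. Your remark on the $n = 2$ borderline case is well taken and explicitly identifies where the exponent $\alpha = 1/2$ is doing work.
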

\begin{proof}
  If $Q^{n + 1} = 0$, $Q^n \neq0$, and $j > n + 1$, we have $(I -
  T(z))T(z)^j = -zQ (1 + zQ)^j = - \sum_{k = 1}^{n}{\left
    ( \begin{smallmatrix} j \\ k - 1 \end{smallmatrix} \right) z^{k}
    Q^{k} }$.  Now
\begin{equation*}
\lim_{j \to \infty}{\frac{\left ( \begin{smallmatrix} j \\ n - 1 \end{smallmatrix} \right)}
{\sum_{k = 1}^n{\left ( \begin{smallmatrix} j
      \\ k - 1 \end{smallmatrix} \right)}}} = 1, \quad \text{ and hence } \quad 
\lim_{j \to \infty}{\frac{ \sum_{k = 1}^n{\left ( \begin{smallmatrix} j
      \\ k - 1 \end{smallmatrix} \right) z^k Q^k} }
{\sum_{k = 1}^n{\left ( \begin{smallmatrix} j
      \\ k - 1 \end{smallmatrix} \right)}}} = z^n Q^n
\end{equation*}
by convex combinations. We conclude from this that
\begin{equation*}
\lim_{j \to \infty}{\frac{ (I - T(z))T(z)^j x}{ \sum_{k = 1}^n{\left ( \begin{smallmatrix} j
      \\ k - 1 \end{smallmatrix} \right)}}} =  z^n Q^n x \quad \text{ for all } \quad x \in X
\end{equation*}
which gives the estimate 
\begin{equation} \mathlabel{AlgebraicOperatorPropEq1}
   \norm{ (I - T(z))T(z)^j x} > \frac{1}{2} \sum_{k = 1}^n{\left ( \begin{smallmatrix} j
      \\ k - 1 \end{smallmatrix} \right)} \norm{z^n Q^n x} > \frac{1}{2} 
\left ( \begin{smallmatrix} j
      \\ n - 1 \end{smallmatrix} \right)  \norm{z^n Q^n x}
\end{equation}
for all $j$ large enough.  If $Q^n x \neq 0$ and $z \neq 0$, we
conclude from \eqref{AlgebraicOperatorPropEq1} that $z \notin \mathcal
T_Q^{1/2}$ since $\left ( \begin{smallmatrix} j \\ n -
  1 \end{smallmatrix} \right)$ is a polynomial of degree $n - 1$ in
variable $j$.  We have now proved $\mathcal T_Q^{1/2} = \{ 0 \}$ which
implies $\mathcal B_Q = \{ 0 \}$ by claim \eqref{InclusionThmClaim5}
of Theorem \ref{InclusionThm}.  To prove the remaining claims, it is
sufficient (by the same theorem) to treat $\mathcal A_Q$ in a similar
manner.
\end{proof}
\begin{remark} \label{VolterraRemark}
  Let us describe the sets \eqref{ResolventCondSet} --
  \eqref{TauberianSet} in the case $Q = -V^{\alpha}$ where
\begin{equation*}
  (V^{\alpha}f)(x) := \frac{1}{\Gamma(\alpha)} 
  \int_{0}^x {(x - v)^{\alpha - 1} f(v) \, dv}
\end{equation*}
is the quasinilpotent Riemann--Liouville operator on $L^2(0,1)$ for
$\alpha \in (0,1]$.  When
$\alpha = 1$ we have the Volterra operator that satisfies
\begin{equation*}
  \mathcal B_{Q} = \mathcal G_{Q} = \mathcal T^{1/2}_{Q} =  \clos{\R}_+, \quad
\mathcal A^{k}_{Q} = \C_+, \quad \text{ and }  \quad\mathcal T_{Q} = \mathcal R_{Q} = \{ 0 \}
\end{equation*}
see \cite[Theorem 1]{DT:OPBCVOP}, \cite[Theorem 1.1]{YL:PWRCDCVO} as
well as Theorem \ref{InclusionThm} below. Further examples of quasinilpotent
operators are given by $(V_\phi f)(x) = (V f)(\phi(x))$ for $\phi \in
C[0,1]$, in which case $V_\phi$ is quasinilpotent if and only if
$\phi(x) \leq x$ for all $x \in [0,1]$; see \cite{YT:QIO} and
\cite{RW:SVCO}. For $\alpha \in (0,1)$, we have $\clos{\R}_+
\subset \mathcal R_{Q}$; see \cite[p.  137]{YL:SPSOSRRC} and also
\cite{ND:OIFPO}.  This provides us with an example of a bounded
analytic semigroup generated by a quasinilpotent operator.
\end{remark}


We give next the elementary properties of the sets defined
in \eqref{TauberianSet}--\eqref{HilleYoshidaSet} based on a direct
application of well-known results.

\begin{proposition} \label{TrivialitiesProp}
  Let $Q \in \BLO(X)$ be quasinilpotent. Then the following holds:
  \begin{enumerate}
  \item \label{TrivialitiesPropClaim2}
  The sets $\mathcal B_{Q}$
  and $\mathcal G_{Q}$ are convex.
  \item \label{TrivialitiesPropClaim4} The sets $\mathcal B_{Q}$ and
    $\mathcal K_{Q}^\infty$ are star-like.
  \item \label{TrivialitiesPropClaim1} $\mathcal A_{Q}^k \R_+ =
    \mathcal A_{Q}^k$ for $k \in \N$ and $\mathcal G_{Q} \R_+ =
    \mathcal G_{Q}$; i.e., they consist of full rays.
  \end{enumerate}
\end{proposition}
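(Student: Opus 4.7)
The plan is to handle the three claims separately, using commutativity of functions of $Q$ throughout, together with the elementary observation that whenever we make a change of variables in the defining parameter (the integer $j$, the real $s$ or $t$), the relevant supremum or limsup is preserved.

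\smallskip
\textbf{Claim (i), convexity.} For $\mathcal B_Q$, given $z_1,z_2\in\mathcal B_Q$ and $t\in[0,1]$, let $z=tz_1+(1-t)z_2$. I would write
\[
1+zQ \;=\; t(1+z_1Q)+(1-t)(1+z_2Q),
\]
and, since $(1+z_1Q)$ and $(1+z_2Q)$ commute, apply the binomial theorem to expand $(1+zQ)^j$ as a convex combination (with weights $\binom{j}{k}t^k(1-t)^{j-k}$) of the products $(1+z_1Q)^k(1+z_2Q)^{j-k}$. Taking norms yields $\|(1+zQ)^j\|\le N_{z_1}N_{z_2}$ uniformly in $j$, so $z\in\mathcal B_Q$. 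For $\mathcal G_Q$, since $z_1Q$ and $z_2Q$ commute we have $e^{szQ}=e^{s t z_1Q}\,e^{s(1-t)z_2Q}$, and boundedness of each factor as $s\to+\infty$ (using $st,s(1-t)\to+\infty$ for $t\in(0,1)$; the endpoints are trivial) gives the conclusion.

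\smallskip
\textbf{Claim (ii), star-likeness.} For $\mathcal B_Q$, since $0\in\mathcal B_Q$ trivially, star-likeness is an immediate corollary of (i): for $z\in\mathcal B_Q$ and $\lambda\in(0,1]$, $\lambda z=\lambda z+(1-\lambda)\cdot 0$ lies in $\mathcal B_Q$. (Alternatively one can expand $1+\lambda zQ=\lambda(1+zQ)+(1-\lambda)I$ and invoke the binomial formula directly.) For $\mathcal K_Q^\infty$, the plan is a rescaling argument. Given $z\in\mathcal K_Q^\infty$ and $\lambda\in(0,1]$, and $s$ with $\Re s>-1/2$, set $s'=\lambda s$; then $\Re s'>-\lambda/2\ge -1/2$ and $(1-s\lambda zQ)^{-k}=(1-s'zQ)^{-k}$. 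The task reduces to comparing the weights. Here I expect the main technical step: proving that
\[
|s+\lambda|-|s|\;\le\;\lambda\bigl(|s+1|-|s|\bigr)\quad\text{for all }\lambda\in(0,1].
\]
This will follow from convexity of $\lambda\mapsto|s+\lambda|$: the secant slope $(|s+\lambda|-|s|)/\lambda$ is nondecreasing in $\lambda>0$, so its value at $\lambda\le 1$ is bounded above by its value at $\lambda=1$. Combining this with the hypothesis on $z$ yields a uniform bound on $(|s+1|-|s|)^k\|(1-s\lambda zQ)^{-k}\|$, hence $\lambda z\in\mathcal K_Q^\infty$. This convexity inequality is the one slightly nontrivial step.

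\smallskip
\textbf{Claim (iii), full rays.} For $z\in\mathcal A_Q^k$ and $r>0$, substitute $s'=rs$: since $s\to+\infty$ iff $s'\to+\infty$,
\[
\limsup_{s\to+\infty}\|(1-srzQ)^{-k}\|=\limsup_{s'\to+\infty}\|(1-s'zQ)^{-k}\|<\infty,
\]
so $rz\in\mathcal A_Q^k$. The reverse inclusion $\mathcal A_Q^k\subset\mathcal A_Q^k\R_+$ is trivial ($1\in\R_+$). The identical substitution $t'=rt$ handles $\mathcal G_Q\R_+=\mathcal G_Q$.
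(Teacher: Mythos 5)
Your proof is correct. Claims (i), (iii), and the star-likeness of $\mathcal B_Q$ in (ii) follow the same elementary route as the paper (convex combinations of power-bounded operators, commuting exponential factorisation, the obvious substitution of scaled parameters).

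The interesting divergence is the star-likeness of $\mathcal K_Q^\infty$. The paper rephrases membership in $\mathcal K_Q^\infty$ via the Hille--Yoshida theorem: $z\in\mathcal K_Q^\infty$ if and only if every $e^{i\phi}-1+e^{i\phi}zQ$, $\phi\in[-\pi,\pi)$, generates a semigroup bounded uniformly in $\phi$, and then a direct exponential estimate
\[
\norm{e^{t(e^{i\phi}-1+e^{i\phi}\alpha zQ)}}\leq e^{-t(1-\alpha)(1-\cos\phi)}\norm{e^{t(e^{i\phi}-1+e^{i\phi}zQ)}}
\]
handles the rescaling $z\mapsto\alpha z$. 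You instead stay on the resolvent side and reduce the question to the scalar inequality $\abs{w+\lambda}-\abs{w}\le\lambda\bigl(\abs{w+1}-\abs{w}\bigr)$, deduced from monotonicity of secant slopes of the convex map $\lambda\mapsto\abs{w+\lambda}$. After the substitution $s'=\lambda s$ (noting $\Re s'>-\lambda/2\ge -1/2$ and that both Kreiss weights are positive in this range, so the inequality survives the $k$-th power), this gives $(\abs{s+1}-\abs{s})^k\norm{(1-s\lambda zQ)^{-k}}\le(\abs{s'+1}-\abs{s'})^k\norm{(1-s'zQ)^{-k}}\le M_z$ directly. Your route is more self-contained and avoids the appeal to the generation theorem, at the cost of the (genuinely the crux, as you note) secant-slope inequality; the paper's route makes the analogy with the semigroup bound \eqref{TrivialitiesPropEq2} transparent and reuses machinery from \cite{ON:GROPB}, which the authors exploit again immediately afterward for the inclusion $\mathcal K_Q^\infty\subset\mathcal G_Q$ in Theorem \ref{InclusionThm}.
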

\begin{proof}
  Since convex combinations of power bounded operators are
  power-bounded, we have $\alpha z_1 + \beta z_2 \in \mathcal B_{Q}$
  if $z_1, z_2 \in \mathcal B_{Q}$ and $\alpha, \beta \geq 0$ satisfy
  $\alpha + \beta = 1$. That $\mathcal B_{Q}$ is star-like follows
  from convexity and the fact that $0 \in \mathcal B_{Q}$.  The full
  ray property of the sets is trivial as well as convexity of
  $\mathcal G_{Q}$.

  It remains to prove that $\mathcal K_{Q}^\infty$ is star-like.
  Using the Hille--Yoshida generator theorem we see that each $e^{i
    \phi} - 1 + e^{i \phi} z Q$ for $\phi \in [-\pi, \pi)$ generates a
    bounded semigroup (with an upper bound $M_z$ not depending on
    $\phi$) if and only if $z \in \mathcal K_Q^\infty$; see
    \cite[p. 248--249]{ON:GROPB}.  For $\alpha \in [0,1]$ and $t \geq
    0$ we have
  \begin{equation} \mathlabel{TrivialitiesPropEq2}
    \norm{e^{t ( e^{i \phi} - 1 + e^{i \phi} \alpha z Q )}} \leq
    e^{-t(1 - \alpha)(1 - \cos{\phi})}     \norm{e^{t ( e^{i \phi} - 1 + e^{i \phi} z Q )}} \leq M_z
  \end{equation}
  and the claim follows.
\end{proof}
\begin{remark}
\label{ZeroBoundaryPointProp}
  We conclude that  $\mathcal G_{Q}$ is a convex sets consisting of full rays,
  i.e., a sector, a single ray, or just the set $\{ 0 \}$. Thus, either $\mathcal
  G_{Q} \subset e^{i \phi} \clos{\C}_+$ for some $\phi \in [-\pi, \pi)$ or
    $\mathcal G_{Q} = \C$ which implies the boundedness of the entire
    function $e^{tQ}$; hence $Q = 0$ by the Liouville's theorem.
\end{remark}

The Hille--Yoshida generator theorem for continuous semigroups takes
the following form:
\begin{proposition} \label{HilleYosidaProp}
  Defining the operators $T_z$ for $z \in \C$ by
  \eqref{OperatorFamilies}, we have
  \begin{equation} \mathlabel{HilleYoshidaChar}
    \mathcal G_Q  = \{ z \in \C :
    \sup_{s > 0,k \in \N} {\norm{T_{sz}^k}}  < \infty  \}
     = \{ z \in \C : \limsup_{s \to +\infty} \left ( \sup_{k \in \N}
      {\norm{T_{sz}^k}} \right ) < \infty \}.
  \end{equation}
\end{proposition}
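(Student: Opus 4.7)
The plan is to apply the Hille--Yoshida generator theorem to the bounded operator $zQ$. Quasinilpotence of $Q$ gives $\sigma(zQ) = \{0\}$, so $(0, \infty) \subset \rho(zQ)$ and the resolvent $R(\mu, zQ) = (\mu I - zQ)^{-1}$ is well-defined for every $\mu > 0$. A direct calculation with $s = 1/\mu$ yields the key identity
\[
\mu^k R(\mu, zQ)^k = (I - zQ/\mu)^{-k} = T_{sz}^k, \qquad s = 1/\mu > 0,\ k \in \N.
\]

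For the first equality I would use this identity to recast $\sup_{s > 0,\, k \in \N} \norm{T_{sz}^k}$ as the standard Hille--Yoshida expression $\sup_{\mu > 0,\, k \in \N} \mu^k \norm{R(\mu, zQ)^k}$. The generator theorem then identifies its finiteness with $zQ$ generating a uniformly bounded $C_0$-semigroup on $[0, \infty)$; continuity of $t \mapsto e^{tzQ}$ at $t = 0$ (with value $I$) matches uniform boundedness on $[0, \infty)$ with the defining condition $\limsup_{t \to +\infty} \norm{e^{tzQ}} < \infty$ of $\mathcal{G}_Q$.

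For the second equality, the inclusion of the middle set into the third is immediate since $\limsup \leq \sup$. For the reverse, assume $\limsup_{s \to +\infty} \sup_{k} \norm{T_{sz}^k} \leq M$ and choose $s_0 > 0$ with $\sup_k \norm{T_{sz}^k} \leq 2M$ for $s \geq s_0$. In resolvent terms, this translates to the partial Hille--Yoshida bound $\mu^k \norm{R(\mu, zQ)^k} \leq 2M$ for $\mu \in (0, 1/s_0]$ and all $k \in \N$. I would then show this partial bound already forces the full one for every $\mu > 0$, and hence $z \in \mathcal{G}_Q$. The quasinilpotence of $zQ$ is essential here: since $\sigma(zQ) = \{0\}$ is the only spectral singularity of the resolvent, the bound near $\mu = 0$ captures the critical data for semigroup generation, while the behavior of $R(\mu, zQ)$ for $\mu$ bounded away from $0$ is controlled by the Neumann series, which converges on $\C \setminus \{0\}$ by quasinilpotence.

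The main obstacle is precisely this last extension of the resolvent bound from $(0, 1/s_0]$ to all $\mu > 0$. In a generic bounded setting, a one-sided bound near the spectral singularity would not be enough, so the argument must genuinely exploit $\sigma(zQ) = \{0\}$---for instance via a Cauchy-type integral expressing $T_{s'z}^k$ for $s' \in (0, s_0)$ through resolvent values at $\mu \in (0, 1/s_0]$, or through a Chernoff-type approximation of the semigroup using only resolvent data near $\mu = 0$.
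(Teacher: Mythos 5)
Your handling of the first equality is exactly what the paper intends: it is the Hille--Yoshida theorem after the substitution $\mu = 1/s$, together with the elementary observation that continuity of $t \mapsto e^{tzQ}$ lets one replace the $\limsup$ in the definition of $\mathcal G_Q$ with a supremum over $[0,\infty)$. Likewise, the inclusion of the middle set into the third is trivially correct. The paper gives no proof at all, treating the whole proposition as a restatement of Hille--Yoshida, so you are aligned with the paper's intended approach up to this point.

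The genuine gap is precisely the step you label ``the main obstacle,'' and you do not close it. Worse, the routes you sketch are unlikely to succeed. A Cauchy-type integral reconstructing $T_{s'z}^k$ for small $s'$ would need resolvent data on a closed contour surrounding $\lambda = 0$ in $\C$, not merely on the real segment $\mu \in (0, 1/s_0]$. And the Chernoff/implicit-Euler/Post--Widder family of approximations, $e^{tA} = \lim_{n\to\infty}(I - \tfrac{t}{n}A)^{-n}$, uses the iterated resolvent at $\mu = n/t \to \infty$; they require the bound for \emph{large} $\mu$, while your hypothesis supplies it only for \emph{small} $\mu$. What quasinilpotence of $zQ$ does give cheaply, via the Neumann expansion of $(\mu - zQ)^{-1}$ around $\mu_0$ (which converges for all $\mu \in (0, 2\mu_0)$ because the spectral radius of $(\mu_0 - zQ)^{-1}$ equals $1/\mu_0$), is the \emph{downward} propagation: $\mu_0^k\norm{(\mu_0 - zQ)^{-k}} \leq M$ for all $k$ already forces $\mu^k\norm{(\mu - zQ)^{-k}} \leq M$ for all $\mu \in (0,\mu_0]$; equivalently $s \mapsto \sup_k\norm{T_{sz}^k}$ is non-increasing. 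But that merely re-derives what the hypothesis already gives you, and leaves the required \emph{upward} extension to $\mu > \mu_0$ completely unaddressed---and the same Neumann estimate now produces only the non-uniform bound $M\bigl(\mu/(2\mu_0 - \mu)\bigr)^k$, which blows up with $k$ once $\mu > \mu_0$. Until you supply a Tauberian-type argument (or cite one) showing that the Hille--Yoshida estimate valid near the spectral singularity propagates to all of $(0,\infty)$, the inclusion of the third set into the middle one is not established.
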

This Gelfand--Hille theorem is a consequence of Proposition
\ref{HilleYosidaProp}:
\begin{proposition} \label{GelfandHilleProp}
  Let $Q \in \BLO(X)$ be a quasinilpotent operator. Suppose there is
  $z \in \mathcal G_Q$, $z \neq 0$, such that $-z \in \mathcal
  T_Q^{1/2}$. Then $Q = 0$.
\end{proposition}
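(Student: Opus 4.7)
\medskip

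\noindent\textbf{Proof plan.} The idea is to combine the two hypotheses into a single operator identity that forces $zQ$ to be arbitrarily small in norm.

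First, I would unpack the hypothesis $z \in \mathcal G_Q$ by invoking Proposition \ref{HilleYosidaProp}, which characterises $\mathcal G_Q$ in terms of power-boundedness of the resolvent family $T_{sz} = (I - s z Q)^{-1}$. Specialising to $s = 1$ yields a constant $M < \infty$ with
\begin{equation*}
  \norm{(I - zQ)^{-j}} \leq M \quad \text{for all } j \in \N.
\end{equation*}
Second, I would translate the hypothesis $-z \in \mathcal T_Q^{1/2}$ directly from the definition \eqref{TauberianSet}: replacing $z$ by $-z$ in $\norm{zQ(1 + zQ)^j}$ gives $\norm{zQ(I - zQ)^j}$, so there exists $C < \infty$ such that
\begin{equation*}
  (j+1)^{1/2}\norm{zQ(I - zQ)^j} \leq C
\end{equation*}
for infinitely many $j \in \N$ (in fact, for all $j$ sufficiently large).

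The key algebraic step is the trivial factorisation
\begin{equation*}
  zQ = \bigl[zQ(I - zQ)^j\bigr] \cdot (I - zQ)^{-j},
\end{equation*}
which uses only that $(I-zQ)$ and $(I-zQ)^{-1}$ are well-defined commuting inverses (both operators exist because $\sigma(zQ) = \{0\}$). Taking norms and combining the two estimates gives
\begin{equation*}
  \norm{zQ} \leq \norm{zQ(I - zQ)^j}\cdot\norm{(I - zQ)^{-j}} \leq \frac{CM}{\sqrt{j+1}}
\end{equation*}
for infinitely many $j$. Letting $j \to \infty$ forces $zQ = 0$, and since $z \neq 0$ by hypothesis we conclude $Q = 0$.

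I do not foresee a significant obstacle here: the hypothesis $z \in \mathcal G_Q$ provides uniform control of negative powers of $I - zQ$, while $-z \in \mathcal T_Q^{1/2}$ provides decay of $zQ$ times positive powers of $I - zQ$, and these two inputs fit together by multiplication. The only point requiring any care is making sure the decay rate from the Tauberian condition beats the boundedness constant $M$, which it does since $(j+1)^{-1/2} \to 0$; any positive exponent $\alpha > 0$ in $\mathcal T_Q^\alpha$ would suffice for this argument, which is consistent with the $\alpha = 1/2$ threshold appearing in the statement.
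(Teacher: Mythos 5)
Your proof is correct and is essentially the same as the paper's: the paper writes $\norm{I - T(-z)} \leq \norm{T(-z)^{-j}}\cdot\norm{(I-T(-z))T(-z)^j}$, which with $T(-z) = I - zQ$ is exactly your factorisation $zQ = \bigl[zQ(I-zQ)^j\bigr](I-zQ)^{-j}$, and both arguments feed the Hille--Yosida bound on $(I-zQ)^{-j}$ from $z \in \mathcal G_Q$ and the $O(j^{-1/2})$ decay from $-z \in \mathcal T_Q^{1/2}$ into that product. Your closing remark that any exponent $\alpha>0$ suffices is also accurate.
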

\noindent The conclusion can be written as $ (- \mathcal G_Q) \cap
\mathcal T_Q^{1/2} = \{ 0 \}$ for $Q \neq 0$.
\begin{proof}
We clearly have
\begin{equation*}
  \norm{I - T(-z)}  \leq \norm{T(-z)^{-j}} \cdot \norm{(I - T(-z)) T(-z)^j}.
\end{equation*}
Now $T(-z)^{-1} = (I  -z Q)^{-1} = T_{z}$. If $z \in \mathcal G_Q$,
then $\norm{T_{z}^j} \leq M < \infty$ for all $j > 0$.  If $-z \in
\mathcal T_Q^{1/2}$, we have $\norm{(I - T(-z)) T(-z)^j} \leq
\frac{C}{\sqrt{j + 1}}$ for all $j > 0$. Then 
\begin{equation*}
  \norm{I - T(-z)}  \leq \frac{ M C}{\sqrt{j + 1}} \to 0 \quad \text{ as } j \to \infty; 
\end{equation*}
hence $T(-z) = I$, and $Q = 0$ follows if $z \neq 0$.
\end{proof}
We remark that the classical Gefand--Hille theorem (see \cite[Theorem
  1]{JZ:OGHT})) implies that neither of the sets $\mathcal B_Q$ or
$\mathcal G_Q$ contains a line $e^{i \phi} \R$ for $\phi \in
[-\pi/2, \pi/2)$ unless $Q = 0$.  Indeed, if $\R \subset \mathcal G_Q$
  then $T := e^Q$ would be an operator with $\sigma(T) = \{ 1 \}$ and
  $\sup_{k \in \Z}{\norm{T^k}} < \infty$. The claim about $\mathcal
  B_Q$ follows from the inclusion $\mathcal B_Q \subset \mathcal G_Q$
  given in Theorem \ref{InclusionThm} below.
\begin{proposition} \label{NevanlinnaLubichProp} 
  The following are equivalent for $z = re^{i \theta} \in \C$:
  \begin{enumerate}
  \item \label{NevanlinnaLubichPropCondition2} There exists a constant
    $C < \infty$ such that
    \begin{equation*}
      \norm{(1 - sz  Q)^{-1}} \leq C \quad \text{  for all } s \text{ with }
      \Re s > -1 / 2.
    \end{equation*}
    (i.e., $z \in \mathcal R_Q$) 
  \item \label{NevanlinnaLubichPropCondition3a} There exists $\delta >
    0$ with the following property: For any $\eta \in [0,\delta)$,
    there exists a constant $C_{\eta} < \infty$ such that
    \begin{equation} \mathlabel{ExtendedRitta}
      \norm{(1 - s z Q)^{-1}} \leq C_{\eta}
      \quad \text{ for all } \quad  s \in \Sector{\pi / 2 +
        \eta} \cup \{ \Re{s} > -1/2\}.
    \end{equation}
  \item \label{NevanlinnaLubichPropCondition3} There exists $\delta >
    0$ with the following property: For any $\eta \in [0,\delta)$,
    there exists a constant $C_{\eta} < \infty$ such that
    \begin{equation} \mathlabel{ExtendedRitt}
      \norm{(1 - sQ)^{-1}} \leq C_{\eta}
      \quad \text{ for all } \quad  s \in e^{i \theta} \Sector{\pi / 2 +
        \eta} \cup \{ \Re{(e^{-i \theta} s)} > -r/2\}.
    \end{equation}
  \item \label{NevanlinnaLubichPropCondition1} $T(z) = I + z Q$ is
    power-bounded, and it satisfies the Tauberian condition $\sup_{n
      \in \N}{(n+1) \norm{\left (I - T(z) \right ) T(z)^n}} <
    \infty$.
  \end{enumerate}
\end{proposition}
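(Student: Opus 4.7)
The strategy is to reduce all four conditions to the classical Ritt resolvent theorem applied to the operator $T(z) := I + zQ$, noting that $\sigma(T(z)) = \{1\}$ by quasinilpotency of $Q$; the two sectorial reformulations (ii) and (iii) then follow from a Neumann-series perturbation argument in the resolvent variable.

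The equivalence of (ii) and (iii) is the direct change of variable $s' = sz$, which converts $(1 - szQ)^{-1}$ into $(1 - s'Q)^{-1}$ and carries the region $\Sector{\pi/2+\eta} \cup \{\Re s > -1/2\}$ onto $e^{i\theta}\Sector{\pi/2+\eta} \cup \{\Re(e^{-i\theta}s') > -r/2\}$. The implication (ii) $\Rightarrow$ (i) is immediate since $\{\Re s > -1/2\}$ is contained in the larger region appearing in (ii). For (i) $\Leftrightarrow$ (iv), I apply the M\"obius transformation $s = 1/(\lambda - 1)$, which maps $\{\abs{\lambda} > 1\}$ bijectively onto $\{\Re s > -1/2\}$ and satisfies $(\lambda - T(z))^{-1} = s(1 - szQ)^{-1}$; hence (i) is precisely the Ritt resolvent condition $\norm{(\lambda - T(z))^{-1}} \leq C\abs{\lambda - 1}^{-1}$ on $\abs{\lambda} > 1$, and the classical Ritt--Nagy--Zem\'anek--Lyubich theorem identifies this with power-boundedness of $T(z)$ together with $\sup_n (n+1)\norm{(I - T(z))T(z)^n} < \infty$, i.e., (iv).

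The main step is (i) $\Rightarrow$ (ii). For any $s_0$ with $\Re s_0 > -1/2$, the algebraic identity $zQ(1 - s_0zQ)^{-1} = s_0^{-1}\bigpar{(1 - s_0zQ)^{-1} - I}$ combined with (i) yields $\norm{zQ(1-s_0zQ)^{-1}} \leq (C + 1)/\abs{s_0}$. The Neumann expansion
\begin{equation*}
(1 - szQ)^{-1} = \sum_{k \geq 0}(s-s_0)^k\bigpar{zQ(1-s_0zQ)^{-1}}^k (1 - s_0zQ)^{-1}
\end{equation*}
then converges for $(C+1)\abs{s - s_0} < \abs{s_0}$, with bound $\norm{(1-szQ)^{-1}} \leq C\bigpar{1 - (C+1)\abs{s-s_0}/\abs{s_0}}^{-1}$. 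Sliding the base point $s_0$ along a vertical line $\Re s_0 = -1/2 + \varepsilon$ and letting $\abs{\Im s_0} \to \infty$, these perturbation discs cover a sector about $\R_+$ of opening angle approaching $\pi + 2\arctan(1/(C+1))$; together with the uniform bound on $\{\Re s > -1/2\}$ from (i) and the trivial bound near $s = 0$, this yields (ii) for every $\eta < \delta := \arctan(1/(C+1))$.

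The principal obstacle lies precisely in this sectorial extension: one must organize the covering of each sub-sector $\Sector{\pi/2 + \eta}$ (with $\eta < \delta$) by perturbation discs in a way that is uniform in the angle, and verify that the resulting constants $C_\eta$ remain finite as $\eta$ approaches the threshold $\delta$.
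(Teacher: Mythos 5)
Your argument is correct and supplies precisely the machinery underlying the paper's one-line proof, which merely cites the characterisation of Ritt operators in \cite[Proposition 1.1]{M-N-Y:OTCFBLO} applied to $T(z) = I + zQ$: the M\"obius substitution $s = 1/(\lambda - 1)$ identifies (i) with the classical Ritt resolvent estimate for $T(z)$, the Ritt--Nagy--Zem\'anek--Lyubich theorem gives (i) $\Leftrightarrow$ (iv), and the Neumann-series perturbation produces the sectorial extension (i) $\Rightarrow$ (ii). Your closing concern about uniformity as $\eta \to \delta$ is not actually needed, since the statement permits $C_\eta$ to diverge as $\eta \to \delta$: for each fixed $\eta < \delta$ simply shrink the perturbation discs to radius $\alpha\abs{s_0}/(C+1)$ with $\alpha := (C+1)\tan\eta \in (0,1)$, which covers $\Sector{\pi/2+\eta}$ with the finite constant $C_\eta = C/(1-\alpha)$.
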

\noindent   This is plainly the characterisation of Ritt's operators in
  \cite[Proposition 1.1]{M-N-Y:OTCFBLO} when applied to $T(z)$ in
  \eqref{OperatorFamilies}. 
  
There is a number of inclusions that the sets in
\eqref{ResolventCondSet} -- \eqref{TauberianSet} satisfy:
\begin{theorem} \mathlabel{InclusionThm}
Let  $Q \in \BLO(X)$ be quasinilpotent. Then the following holds:
\begin{enumerate}
\item \label{InclusionThmClaim2} $\mathcal R_Q = \mathcal B_Q
  \cap \mathcal T_Q $;
\item \label{InclusionThmClaim4} $\mathcal G_Q \C_+ \subset
  \mathcal A_Q^k$ for all $k \in \N$ and, in particular, $\mathcal R_Q
  \C_+ \subset \mathcal A_Q$;
\item \label{InclusionThmClaim1} $\mathcal B_{Q} \subset \mathcal
  B_{Q} \R_+ \subset \mathcal G_{Q} \subset \mathcal A_Q^k$ for all $k
  \in \N$;
\item \label{InclusionThmClaim2b} $\mathcal R_Q = \mathcal A_Q^k
  \cap \mathcal T_Q $  for all $k \in \N$;
\item \label{InclusionThmClaim3} $\mathcal T_Q \subset \mathcal
  T_Q^{1/2}$ and $\mathcal G_Q \cap \mathcal T_Q^{1/2}   \subset
  \mathcal B_Q$;
\item \label{InclusionThmClaim6} $\mathcal R_Q = \mathcal G_Q
  \cap \mathcal T_Q$;
\item \label{InclusionThmClaim7} $\mathcal R_Q \R_+ = \mathcal
  R_Q$;
\item \label{InclusionThmClaim5} $\mathcal B_Q [0,1) \subset
  \mathcal T_Q^{1/2}$;
\item \label{InclusionThmClaim8} $\mathcal B_{Q} \subset \mathcal
  K_{Q}$ and $\mathcal K_{Q} \C_+ \subset \mathcal A_Q$;
\item \label{InclusionThmClaim8b} $\mathcal B_{Q} \subset \mathcal
  K_{Q}^\infty \subset \mathcal G_Q \cap \mathcal K_{Q}$; and
\item \label{InclusionThmClaim9}  
  $\left(- \mathcal B_{Q^2} \right )^{1/2} \cap \mathcal G_Q \subset \mathcal B_Q$.
\end{enumerate}
\end{theorem}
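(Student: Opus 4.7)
The plan is to establish the algebraic and Laplace-transform inclusions first, taking claim (i)---which is a direct restatement of Proposition \ref{NevanlinnaLubichProp} applied to $T(z)$---as the cornerstone, and then to use the sole deep Tauberian statement (v) to bootstrap the rest. The third inclusion of (iii), $\mathcal G_Q\subset \mathcal A_Q^k$, and claim (ii) both come from the single Laplace identity
\[
(1-swuQ)^{-k}=\frac{(sw)^{-k}}{(k-1)!}\int_0^\infty t^{k-1}e^{-t/(sw)}e^{tuQ}\,dt,
\]
valid whenever $u\in\mathcal G_Q$ and $\Re(1/(sw))>0$ (i.e., $w\in\C_+$ and $s>0$); a direct estimate yields $\norm{(1-swuQ)^{-k}}\leq M_u(|w|/\Re w)^k$, uniformly in $s$. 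The middle inclusion $\mathcal B_Q\R_+\subset \mathcal G_Q$ of (iii) follows from $e^{rzQ}=e^{-r}\sum_k(r^k/k!)(I+zQ)^k$, which gives $\norm{e^{rzQ}}\leq N_z$ for every $r>0$ and $z\in\mathcal B_Q$. The trivial halves of (iii), (v), (ix), (x) need no comment.

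The algebraic claims (ix), (x), (xi) rest on short identities. For (ix), the Neumann series $(\lambda-T(z))^{-1}=\sum_j\lambda^{-j-1}T(z)^j$ for $|\lambda|>1$ yields the Kreiss bound from power-boundedness, and $|s+1|-|s|\to\cos(\arg s)>0$ as $|s|\to\infty$ inside $\C_+$ gives $\mathcal K_Q\C_+\subset\mathcal A_Q$. For (x), the same Neumann expansion for higher-order resolvents gives $\mathcal B_Q\subset\mathcal K_Q^\infty$, while $\mathcal K_Q^\infty\subset\mathcal G_Q\cap\mathcal K_Q$ is the Hille--Yoshida step already used in the proof of Proposition \ref{TrivialitiesProp}(ii), specialised to $\phi=0$. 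For (xi), the factorisation $T(z)T(-z)=I-z^2Q^2$ gives $T(z)^j=(I-z^2Q^2)^j T_z^j$; the first factor is bounded because $-z^2\in\mathcal B_{Q^2}$, and the second because $z\in\mathcal G_Q$ via Proposition \ref{HilleYosidaProp}.

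The remaining claims (iv), (vi), (vii), (viii) are assembled once (v) is available. Claim (vi) is the chain $\mathcal R_Q=\mathcal B_Q\cap\mathcal T_Q\subset\mathcal G_Q\cap\mathcal T_Q\subset\mathcal G_Q\cap\mathcal T_Q^{1/2}\subset\mathcal B_Q\cap\mathcal T_Q=\mathcal R_Q$; claim (iv) is analogous once a short inverse-Laplace Tauberian step establishes $\mathcal A_Q^k\cap\mathcal T_Q\subset\mathcal G_Q$. For (vii), (vi) reduces matters to showing $r\cdot\mathcal T_Q\subset\mathcal T_Q$ on $\mathcal G_Q$; Proposition \ref{NevanlinnaLubichProp}(iii) already extends the bound on $(1-sQ)^{-1}$ into an open sector through the origin, and Phragmén--Lindelöf applied to the entire function $s\mapsto(1-sQ)^{-1}$ (entire thanks to quasinilpotency of $Q$) covers the enlarged shifted half-plane that the scaling requires. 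For (viii), substituting $T(\lambda z)=\lambda T(z)+(1-\lambda)I$ into the binomial expansion and performing Abel summation reduces $\norm{\lambda zQ(I+\lambda zQ)^j}$ to $N_z$ times the total variation of the $\mathrm{Binomial}(j,\lambda)$ distribution, which is $O(1/\sqrt{j+1})$ by the standard local-limit estimate.

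The main obstacle is the non-trivial half of (v), $\mathcal G_Q\cap\mathcal T_Q^{1/2}\subset\mathcal B_Q$: one must convert the Hille--Yoshida bound on $(1-szQ)^{-1}$ together with the square-root decay $\sqrt{j+1}\norm{zQ(I+zQ)^j}\leq C$ into the uniform power-bound $\norm{T(z)^j}\leq N$. I would aim for a square-function estimate of the shape $\norm{T(z)^j x}^2\lesssim\norm{x}^2+\sum_k(k+1)\norm{zQ\,T(z)^k x}^2$, obtained by representing $T(z)^j$ as a Mellin--Barnes or Euler-limit integral of $e^{tzQ}$ and applying Parseval on the Laplace side. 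The technical difficulty is passing from the Hilbert-space inequality to a genuine operator-norm bound in a general Banach space; whatever device succeeds here will also drive the Tauberian step invoked for (iv).
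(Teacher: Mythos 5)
Your overall architecture matches the paper's: elementary Laplace/algebraic identities for claims (ii), (iii), (ix), (x), (xi), a Hille--Yoshida step, and then deep Tauberian input for (iv) and (v). But there are two substantive gaps and one serious misstep.

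First, the hard part of (v), namely $\mathcal G_Q \cap \mathcal T_Q^{1/2} \subset \mathcal B_Q$, is not actually established. You identify it as the crux, sketch a Mellin--Barnes/Parseval square-function approach, and then candidly note that you have no device to pass from the Hilbert-space inequality to a genuine operator-norm bound in an arbitrary Banach space. That is exactly the obstacle, and you have not crossed it. The paper treats this as known input: it cites the classical Tauberian theorem \cite[Theorem III.5, p.~68]{AP:LOS} (Peyerimhoff) via \cite{M-N-Y:OTCFBLO}, which works in any Banach space. Without some such external Tauberian theorem, your proof of (v) is empty, and since (iv) and (vi) depend on (v) (or on the equally deep \cite[Theorem 1]{M-N-Y:OTCFBLO} for (iv)), the whole second half of your plan collapses.

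Second, your route for claim (vii) is wrong as stated. You propose to invoke Phragm\'en--Lindel\"of on the entire function $s \mapsto (1 - sQ)^{-1}$. But for a general quasinilpotent $Q$ this function need not be of any finite exponential type; the paper makes this point explicitly at the start of Section~\ref{LindelofSec} and again in Proposition~\ref{HalfPlaneRittProp}, where failure of exponential type is shown to be compatible with a nontrivial Ritt set. Phragm\'en--Lindel\"of therefore cannot be applied here without additional compactness hypotheses, which are absent from the theorem. The paper's actual proof of (vii) is elementary and avoids this: condition \eqref{NevanlinnaLubichPropCondition3} of Proposition~\ref{NevanlinnaLubichProp} gives uniform boundedness of $(1 - sQ)^{-1}$ on the region $e^{i\theta}\Sector{\pi/2+\eta} \cup \{\Re(e^{-i\theta}s) > -r/2\}$; replacing $z$ by $hz$ changes this region to $e^{i\theta}\Sector{\pi/2+\eta} \cup \{\Re(e^{-i\theta}s) > -r/2h\}$, a set that differs from the original only by a precompact piece near the origin, and boundedness there follows by continuity. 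No growth control on the resolvent at infinity is needed.

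Third, your treatment of (iv) replaces one hard Tauberian step with another unproved one: "a short inverse-Laplace Tauberian step establishes $\mathcal A_Q^k \cap \mathcal T_Q \subset \mathcal G_Q$" is precisely the content of \cite[Theorem 1]{M-N-Y:OTCFBLO} (applied to $1 + zQ$), which the paper cites; calling it short does not make it so.

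The remaining claims ((i), (ii), (iii), (ix), (x), (xi), and your binomial-distribution reading of (viii), which is a correct rephrasing of \cite[Theorem 4.5.3]{ON:CIL}) are handled correctly and in essentially the same way as the paper.
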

\begin{proof}
  Claim \eqref{InclusionThmClaim2} and the latter part of
  \eqref{InclusionThmClaim4} follow directly from Proposition
  \ref{NevanlinnaLubichProp}. The first inclusion of
  \eqref{InclusionThmClaim4} follows from the Hille--Yosida theorem:
  if $z \in \mathcal G_Q$, then for all $s \in \C_+$and $k \in \N$ we
  have $\norm{(s^{-1} - z Q )^{-k}} \leq M/(\Re{s^{-1}})^k$; i.e.,
  $\norm{(I - s z Q )^{-k}} \leq M /cos^k{\phi}$ for some constant $M <
  \infty$ where $\cos{\phi} = \Re{s}/\abs{s}$.  Claim
  \eqref{InclusionThmClaim1} follows from the fact that the power
  series coefficients of $e^x$ are positive, together with
  \eqref{HilleYoshidaChar} and the fact that $\mathcal G_{Q} \R_+ =
  \mathcal G_{Q}$ by Proposition \ref{TrivialitiesProp}.
  Let us prove the first part of claim \eqref{InclusionThmClaim2b}.
  The inclusion $\mathcal A_Q \cap \mathcal T_Q \subset \mathcal B_Q$
  follows by applying \cite[Theorem 1]{M-N-Y:OTCFBLO} to operators $1
  + z Q$. Hence $\mathcal A_Q \cap \mathcal T_Q \subset \mathcal B_Q
  \cap \mathcal T_Q = \mathcal R_Q$ by claim
  \eqref{InclusionThmClaim2}. The converse inclusion follows from
  claims \eqref{InclusionThmClaim2} and \eqref{InclusionThmClaim1}.
    
  Claim \eqref{InclusionThmClaim3} is another tauberian theorem, and
  it follows from \cite[Theorem III.5 on p. 68]{AP:LOS} as pointed out
  in \cite{M-N-Y:OTCFBLO}.

  Claim \eqref{InclusionThmClaim7} is given in \cite[Proposition
    2]{DT:OPBCVOP} but we prove it here, too.  Let $z = re^{i
    \theta}\in \mathcal R_Q$ and $h > 0$ be given.  Then there exists
  $\delta > 0$ such that $Q$ satisfies condition
  \eqref{NevanlinnaLubichPropCondition3} of Proposition
  \ref{NevanlinnaLubichProp}. This is equivalent with having
  $\norm{(1 - s\cdot hQ)^{-1}} \leq C_{\theta,\delta}$ for any $\eta
  \in [0,\delta)$ and all $s \in e^{i \theta} \Sector{\pi / 2 + \eta} \cup
  \{ \Re{(e^{-i \theta} s)} > -r/2h\}$ since $h^{-1}e^{i \theta}
  \Sector{\pi / 2 + \eta} = e^{i \theta} \Sector{\pi / 2 + \eta}$ and $h^{-1}\{
  \Re{(e^{-i \theta} s)} > -r/2\} = \{ \Re{(e^{-i \theta} s)} >
  -r/2h\}$. If $h > 1$, we do not have the inclusion $\{ \Re{(e^{-i
      \theta} s)} > -r/2\} \subset \{ \Re{(e^{-i \theta} s)} >
  -r/2h\}$ but it is nevertheless easy to see that the estimate
  $\norm{(1 - s\cdot hQ)^{-1}} \leq C'_{\theta,\delta}$ holds (with a
  larger constant $C'_{\theta,\delta} < \infty$ in place of
  $C_{\theta,\delta}$) even for $s$ in the larger set $e^{i \theta}
  \Sector{\pi / 2 + \eta} \cup \{ \Re{(e^{-i \theta} s)} > -r/2\}$ since
  this set differs from $e^{i \theta} \Sector{\pi / 2 + \eta} \cup \{
  \Re{(e^{-i \theta} s)} > -r/2h\}$ only by a precompact set.
  Proposition \ref{NevanlinnaLubichProp} proves now claim
  \eqref{InclusionThmClaim2}.
  
  Claim \eqref{InclusionThmClaim5} follows from the fact that for
  $\alpha \in [0,1)$ and power-bounded $T$, the bound $\sqrt{j + 1} \,
  \norm{(I - T_\alpha)T_\alpha^j} \leq M_\alpha < \infty$ holds for
  all operators $T_\alpha = \alpha + (1 - \alpha)T$, see \cite[Theorem
  4.5.3]{ON:CIL}. See also \cite[Remark 2]{DT:OPBCVOP} and \cite[Lemma
  2.1]{F-W:CPSCMO}.
  
  Let us prove claim \eqref{InclusionThmClaim8}.  The inclusion
  $\mathcal B_{Q} \subset \mathcal K_{Q}$ follows because
  power-boundedness implies (even the iterated) the Kreiss resolvent
  condition by a straightforward argument.  Let us prove the latter
  inclusion. Clearly for all $s \in \C$ we have $\abs{s + 1} - \abs{s}
  = \frac{2 \Re{s} + 1}{\abs{s + 1} + \abs{s}}$, and $\abs{s} < \abs{s
    + 1}$ for $s \in \C_+$.  Thus for $s \in \C_+$ we have
  $\cos{\theta} - \frac{1}{2\abs{s + 1}} < \abs{s + 1} - \abs{s} <
  \cos{\phi} + \frac{1}{2\abs{s}}$ where $\cos{\theta} = \frac{\Re{(s
      + 1)}}{\abs{s + 1}} > \frac{\Re{s}}{\abs{s}} = \cos{\phi}$ and
  $0 \leq \phi < \theta$.  It follows that for $\phi \in [0, \pi/2)$,
    there are costants $m_\phi, M_\phi$ such that
  \begin{equation} \mathlabel{TrivialitiesPropEq1}
    0 < m_\phi \leq \abs{s + 1} - \abs{s} < M_\phi < \infty
  \end{equation}
 for  all $s \in \Sector{\phi}$. 
 
 Now, let $z \in \mathcal K_Q$ and $\phi \in [0, \pi/2)$. Then for
 any $\alpha \in [-\phi, \phi]$ we get
 \begin{equation*}
   \sup_{r \geq 0}{\norm{(1 - r \cdot e^{i \alpha} zQ)^{-1}}}
   \leq \sup_{s \in \Sector{\phi}}{\norm{(1 - s zQ)^{-1}}} < \infty
 \end{equation*}
 by equation \eqref{TrivialitiesPropEq1}. Thus $e^{i \alpha} z \in
 \mathcal A_Q$ and also $\Sector{\phi} z \subset \mathcal A_Q$.
 Because $z \in \mathcal K_Q$ and $\phi \in (-\pi/2, \pi/2)$ are
 arbitrary, we conclude that $\mathcal K_{Q} \C_+ \subset \mathcal
 A_Q$.  

The nontrivial part $\mathcal K_Q^\infty \subset \mathcal G_Q$ of
claim \eqref{InclusionThmClaim8b} follows from the estimate
$\norm{e^{t e^{i \phi} zQ}} \leq M_z e^{t(1 - \cos{\phi})}$ for all
$\phi \in [-\pi, \pi)$ that can be proved in a similar way as
  \eqref{TrivialitiesPropEq2}.
 
 Finally claim \eqref{InclusionThmClaim9}: Note that
 \eqref{OperatorFamilies} implies $T(z)^k = (1 - z^2 Q^2)^k T_z^k$ for
 all $k \in \N$. If $z^2 \in -\mathcal B_{Q^2}$, then $1 - z^2 Q^2$ is
 power-bounded.  Since $z \in \mathcal G_Q$, the operator $T_z$ is
 power-bounded, and thus $z \in \mathcal B_Q$.
\end{proof}
\begin{remark}
  Claim \eqref{InclusionThmClaim2} is strengthened in Theorem
  \ref{InteriorBQoThm}.  It is instructive to compare claims
  \eqref{InclusionThmClaim4} and \eqref{InclusionThmClaim8}.  By claim
  \eqref{InclusionThmClaim5} we get $\mathcal B_Q^\circ \subset
  \mathcal T_Q^{1/2}$ where $\mathcal B_Q^\circ$ denotes the open
  interior of $\mathcal B_Q$. A stronger form of this claim is given
  by Proposition \ref{InterioInclusionProp}.
\end{remark}
\begin{remark}
  Note that \eqref{InclusionThmClaim4} and \eqref{InclusionThmClaim2b}
  imply that the inclusion $\mathcal R_Q = \mathcal A_Q \cap \mathcal
  T_Q \subset \mathcal A_Q$ is strict if $\mathcal R_Q$ has a nonempty
  interior.  Hence the inclusion $\mathcal A_Q \subset \mathcal T_Q$
  does not generally hold, and even $\mathcal A_Q \subset \mathcal
  T_Q^{1/2}$ fails by the remark following Theorem
  \ref{PhragmenLindelofMainThm}.  It is an open question whether the
  converse inclusion $\mathcal T_Q^{1/2} \subset \mathcal A_Q$ always holds.
  This and the stronger inclusion $\mathcal T_Q \subset \mathcal B_Q$
  are given in Theorem \ref{PhragmenLindelofMainThm} under quite
  restrictive assumptions.
\end{remark}

\begin{corollary} \label{InclusionThmCor} 
For a quasinilpotent $Q \in \BLO(X)$ we have $\mathcal B_Q = \mathcal
G_Q$ if and only if $\mathcal G_Q \subset \mathcal T_Q^{1/2}$.
\end{corollary}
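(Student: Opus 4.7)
The plan is to deduce both implications directly from the relevant inclusions already established in Theorem \ref{InclusionThm}, using the full-ray structure of $\mathcal G_Q$ provided by Proposition \ref{TrivialitiesProp}.

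For the reverse implication, suppose $\mathcal G_Q \subset \mathcal T_Q^{1/2}$. By claim \eqref{InclusionThmClaim1} of Theorem \ref{InclusionThm} we always have $\mathcal B_Q \subset \mathcal G_Q$, so it suffices to establish the opposite inclusion. By hypothesis, $\mathcal G_Q = \mathcal G_Q \cap \mathcal T_Q^{1/2}$, and claim \eqref{InclusionThmClaim3} of the same theorem yields $\mathcal G_Q \cap \mathcal T_Q^{1/2} \subset \mathcal B_Q$. Combining these two inclusions gives $\mathcal G_Q \subset \mathcal B_Q$, hence equality.

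For the forward implication, assume $\mathcal B_Q = \mathcal G_Q$. The key observation is that $\mathcal G_Q$ consists of full rays by Proposition \ref{TrivialitiesProp}\eqref{TrivialitiesPropClaim1}, and therefore under our hypothesis $\mathcal B_Q$ does as well. Hence for any $z \in \mathcal G_Q = \mathcal B_Q$ we have $2z \in \mathcal G_Q = \mathcal B_Q$, so that $z = \tfrac{1}{2}(2z) \in \mathcal B_Q[0,1)$. Applying claim \eqref{InclusionThmClaim5} of Theorem \ref{InclusionThm} we conclude $z \in \mathcal T_Q^{1/2}$, which shows $\mathcal G_Q \subset \mathcal T_Q^{1/2}$.

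There is no serious obstacle here: the entire argument is a bookkeeping exercise on the inclusions \eqref{InclusionThmClaim1}, \eqref{InclusionThmClaim3}, and \eqref{InclusionThmClaim5} of Theorem \ref{InclusionThm}. The only subtlety, which it is important not to miss, is that claim \eqref{InclusionThmClaim5} only provides the inclusion for the scaled set $\mathcal B_Q [0,1)$ rather than $\mathcal B_Q$ itself, so one must exploit the full-ray property of $\mathcal G_Q$ to lift each $z \in \mathcal G_Q$ into the interior of a full ray lying in $\mathcal B_Q$.
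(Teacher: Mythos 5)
Your proof is correct, and in fact it is more careful than the paper's own one-line justification. Both directions check out: for the ``if'' direction, intersecting with the hypothesis $\mathcal G_Q \subset \mathcal T_Q^{1/2}$ and applying claim \eqref{InclusionThmClaim3} gives $\mathcal G_Q \subset \mathcal B_Q$, and claim \eqref{InclusionThmClaim1} supplies the reverse inclusion. For the ``only if'' direction, your observation that $\mathcal G_Q$ (and therefore, under the hypothesis, $\mathcal B_Q$) consists of full rays is exactly what is needed to pass from $z \in \mathcal B_Q$ to $z \in \mathcal B_Q[0,1)$, after which claim \eqref{InclusionThmClaim5} of Theorem \ref{InclusionThm} concludes the argument.

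The one noteworthy point of comparison: the paper states that the corollary ``follows from claims \eqref{InclusionThmClaim1} and \eqref{InclusionThmClaim3} of Theorem \ref{InclusionThm},'' but these two claims alone only settle the ``if'' direction. The ``only if'' direction genuinely requires claim \eqref{InclusionThmClaim5} together with the full-ray property of $\mathcal G_Q$ from Proposition \ref{TrivialitiesProp}, which is precisely what you invoke. So your account is not merely equivalent to the paper's — it supplies a citation that the paper's terse sketch omits, and the subtlety you flag at the end (that \eqref{InclusionThmClaim5} covers only $\mathcal B_Q[0,1)$, not $\mathcal B_Q$) is indeed where the full-ray structure has to enter.
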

\noindent Indeed, this follows from claims \eqref{InclusionThmClaim1}
and \eqref{InclusionThmClaim3} of Theorem \ref{InclusionThm}.

A slight improvement of Proposition \ref{NevanlinnaLubichProp}   is possible:
\begin{proposition} \label{RittSetProp}
  For any quasinilpotent $Q \in \BLO(X)$ we have
\begin{equation} \mathlabel{RittSetImproved}
 \mathcal R_{Q} 
 = \{ z \in \C: \sup_{\Re s > 0 }{\norm{(1 - szQ)^{-1}}} < \infty \}.  
\end{equation}
\end{proposition}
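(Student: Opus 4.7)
The easy direction $\mathcal R_Q \subset$ (RHS of \eqref{RittSetImproved}) is immediate from $\{\Re s > 0\} \subset \{\Re s > -1/2\}$. For the converse, I would let $M_0 := \sup_{\Re s > 0}\|(1-szQ)^{-1}\| < \infty$ and set $A := zQ$, then pass to the resolvent variable $\mu = 1/s$. Since $(1 - sA)^{-1} = \mu(\mu - A)^{-1}$ and the M\"obius map $s \mapsto 1/s$ sends $\{\Re s > 0\}$ to $\{\Re\mu > 0\}$ and $\{\Re s > -1/2\}$ to $\{|\mu+1|>1\}$, the hypothesis becomes $\|(\mu - A)^{-1}\| \leq M_0/|\mu|$ on the right half-plane, and the goal $z \in \mathcal R_Q$ reduces to the same estimate (with a larger constant) on $\{|\mu+1|>1\}$. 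Continuity of the resolvent on $\C \setminus \{0\}$ then extends the hypothesis to $\|(it - A)^{-1}\| \leq M_0/|t|$ for $t \neq 0$.

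It remains to control $|\mu|\,\|(\mu-A)^{-1}\|$ on the ``gap'' $G := \{|\mu+1| > 1,\, \Re\mu < 0\}$, which I would split into three regimes. For $|\mu| \geq 2\|A\|$ the Neumann series yields $\|(\mu-A)^{-1}\| \leq 2/|\mu|$. For $\mu = -\epsilon + it \in G$ with $|\mu| \leq r_0 := 1/(4M_0^2)$, the defining condition $|\mu+1| > 1$ forces $t^2 > \epsilon(2-\epsilon) \geq \epsilon$ (for $\epsilon \leq 1$), hence $\epsilon M_0/|t| \leq M_0 \sqrt{\epsilon} \leq 1/2$. The resolvent identity
\begin{equation*}
(\mu - A)^{-1} = (it - A)^{-1}\bigl(I - \epsilon(it-A)^{-1}\bigr)^{-1}
\end{equation*}
combined with the boundary estimate $\|(it-A)^{-1}\| \leq M_0/|t|$ then gives $\|(\mu-A)^{-1}\| \leq 2M_0/|t|$, and together with the elementary bound $|\mu|/|t| \leq \sqrt{2}$ (valid since $\epsilon^2 \leq t^2$ in this regime), this yields $|\mu|\,\|(\mu-A)^{-1}\| \leq 2\sqrt{2}\, M_0$. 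In the remaining intermediate regime $r_0 \leq |\mu| \leq 2\|A\|$ (if non-empty), the closure of $G \cap \{r_0 \leq |\mu| \leq 2\|A\|\}$ is a compact subset of $\C \setminus \{0\}$, so the resolvent is uniformly bounded there by continuity. Combining the three regimes with the hypothesis on $\{\Re\mu \geq 0,\,\mu\neq 0\}$ produces a uniform bound on $|\mu|\,\|(\mu-A)^{-1}\|$ over $\{|\mu+1|>1\}$, which translates back via $s = 1/\mu$ to $\sup_{\Re s > -1/2}\|(1-szQ)^{-1}\| < \infty$, i.e., $z \in \mathcal R_Q$.

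The main obstacle, I expect, is the small-$|\mu|$ regime. The hypothesis region $\{\Re\mu > 0\}$ and the target region $\{|\mu+1|>1\}$ are tangent at the accumulation point $\mu = 0$ of the spectrum with quadratic contact, and one must exploit precisely this tangency (encoded in the constraint $\epsilon < t^2$) to keep the perturbation $\epsilon(it-A)^{-1}$ contractive in norm. Choosing the perturbation scale $r_0 = 1/(4M_0^2)$ correctly is the key technical point; once this is in place, the remainder of the argument is routine.
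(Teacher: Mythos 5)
Your proof is correct, and it takes a genuinely different route from the paper's. The paper works directly in the $s$-variable: it uses the same resolvent-identity perturbation from the imaginary axis to get $\norm{(I - szQ)^{-1}} \leq 2M$ on the half-strip $\{-\alpha < \Re s \leq 0\}$ with $\alpha = 1/(2M\norm{zQ})$, and then, since this only shows $2\alpha z \in \mathcal R_Q$ rather than $z \in \mathcal R_Q$, it appeals to the ray property $\mathcal R_Q \R_+ = \mathcal R_Q$ (claim (vii) of Theorem \ref{InclusionThm}) to remove the scaling factor; the authors even remark afterwards on the apparent circularity this invites. Your change of variables to $\mu = 1/s$ avoids that external input entirely: the Neumann series at infinity, the perturbative bootstrap near $i\R$ for small $|\mu|$, and compactness on the intermediate annulus patch together into a single uniform bound over $\{|\mu+1|>1\}$, with no rescaling step. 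Both arguments hinge on the same perturbation computation near the imaginary axis; what yours buys is self-containment and an explicit handling of the quadratic tangency of $\{\Re\mu>0\}$ and $\{|\mu+1|>1\}$ at $\mu=0$, at the cost of bookkeeping three regimes. One minor point to make explicit: you need $M_0 \geq 1$ (which holds automatically since $(1-szQ)^{-1} \to I$ as $s\to 0$ through $\Re s > 0$) so that $r_0 \leq 1/4$ and hence $\epsilon \leq 1$, which is what justifies the step $\epsilon(2-\epsilon) \geq \epsilon$.
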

\begin{proof}
  It is clear by continuity that $\sup_{\Re s > 0 }{\norm{(1 -
      szQ)^{-1}}} = \sup_{\Re s \geq 0 }{\norm{(1 - szQ)^{-1}}}$.  By
  \eqref{RittSet}, it is thus enough to show that 
  \begin{equation*}
 A := \{ z \in \C:
  \sup_{\Re s \geq 0 }{\norm{(1 - szQ)^{-1}}} < \infty \} \subset
   \mathcal R_{Q}.   
  \end{equation*}
  Fix $z \in A$, and define $M := \sup_{\Re s \geq 0 }{\norm{\left (I
        - s z Q \right )^{-1}}} < \infty$ and $\alpha := \frac{1}{2 M
    \norm{z Q}} > 0$.  Take any $s = x + yi \in \C$ where $x \in (-\alpha,
  0]$ and $y \in \R$. Now
  \begin{equation*}
    \norm{(I - s z Q)^{-1}} \leq \norm{(I - iy z Q)^{-1}} \cdot
    \norm{\left (I - (I - iy z Q)^{-1} (s - i y) z Q \right )^{-1}}
  \end{equation*}
  where $\norm{(I - iy z Q)^{-1}} < M$ and
  \begin{equation*}
   \norm{ (I - iy z Q)^{-1} (s - i y ) z Q } \leq M \alpha \norm{z Q} = \frac{1}{2}.
  \end{equation*}
  It follows from this that
  \begin{equation} \mathlabel{RittSetPropEq1}
    \norm{\left (I - s z Q \right )^{-1}} \leq M_\alpha \quad \text{ for all }
    s \text{ satisfying }  - \alpha < \Re{s} \leq 0
  \end{equation}
  holds with $M_\alpha = 2M$, and hence for all $s$ with $\Re{s} > -
  \alpha$ because $z \in A$.  Condition \eqref{RittSetPropEq1} for
  $\Re{s} > - \alpha$ is clearly equivalent with
  \begin{equation*} 
    \norm{\left (I - \tfrac{s}{2 \alpha} \cdot 2 \alpha z Q \right )^{-1}} < M_\alpha \quad \text{ for all }
    s \text{ satisfying } \Re{\tfrac{s}{2 \alpha}} > - 1/2.
  \end{equation*}
  which by \eqref{RittSet} is equivalent with $2 \alpha z \in \mathcal
  R_Q$, and further equivalent with $z \in \mathcal R_Q$ by claim
  \eqref{InclusionThmClaim7} of Theorem \ref{InclusionThm}.
 \end{proof} 
Clearly \eqref{RittSetImproved} implies trivially $\mathcal R_Q \R_+
\subset \mathcal R_Q$ but we use it in the above proof.  Using the
sectorial extension property in claim
\eqref{NevanlinnaLubichPropCondition3a} of Proposition
\ref{NevanlinnaLubichProp} and the boundedness of the resolvent in any
compact set, we see that whenever $z \in \mathcal R_Q$ holds, we have
   \begin{equation*}
     \sup_{\Re{s} > \alpha}{\norm{\left (I
         - s z Q \right )^{-1}}} < \infty \quad \text{ for all } \quad \alpha < 0. 
   \end{equation*}
  The resolvent estimation technique in the proof of Proposition
  \ref{RittSetProp} shows also the following: If $M_\alpha :=
  \sup_{\Re{s} > \alpha}{\norm{\left (I - s z Q \right )^{-1}}} <
  \infty$ for \emph{some} $\alpha > 0$, then $M_\beta < \infty$ holds
  for some $\beta \in (0,\alpha)$. Indeed, for any $\alpha > 0$ it
  follows from $\sup_{\Re{s} > \alpha}{\norm{\left (I - s z Q \right
      )^{-1}}} < \infty$ by continuity that $\sup_{\Re{s} \geq
    \alpha}{\norm{\left (I - s z Q \right )^{-1}}} < \infty$. This
  implies $\sup_{\Re{s} > \beta}{\norm{\left (I - s z Q \right
      )^{-1}}} < \infty$ for some $\beta \in (0,\alpha)$ by an
  estimation using the resolvent identity.  However, we cannot exclude
  the possibilities that $\gamma := \inf\{\beta : M_\beta < \infty\}
  > 0$ or $M_\gamma = \infty$ even in the case when $\gamma = 0$.

An interpolation between $\mathcal R_Q$ and $\mathcal T_Q^{1/2}$
produces the following result:
\begin{proposition} \label{RittWeakTauberianInterpProp}
  For a quasinilpotent $Q \in \BLO(X)$ we have
  \begin{equation*}
    \left \{ \alpha z_1 + (1 - \alpha) z_2 : \alpha \in [0,1], z_1 \in \mathcal T_Q^{1/2}, z_2 \in \mathcal R_Q \right \}
 \subset \mathcal T_Q^{1/2}.
  \end{equation*}
\end{proposition}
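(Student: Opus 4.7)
The plan is to exploit that all relevant operators are polynomials in $Q$ and hence commute, and to split $(I-T(z))T(z)^j$ into two pieces, each handled by one of the assumed properties. Set $T_1 := T(z_1) = I + z_1 Q$, $T_2 := T(z_2) = I + z_2 Q$ and $T := T(z) = \alpha T_1 + (1-\alpha) T_2$. Since $T_1$ and $T_2$ commute, the binomial theorem gives
\begin{equation*}
T^j = \sum_{k=0}^j \binom{j}{k}\alpha^k(1-\alpha)^{j-k} T_1^k T_2^{j-k},
\end{equation*}
and $I - T = \alpha(I-T_1) + (1-\alpha)(I-T_2)$, so $(I-T)T^j$ splits into a ``$T_1$-piece'' multiplied by $\alpha$ and a ``$T_2$-piece'' multiplied by $1-\alpha$. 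The goal is to show each piece is $O(1/\sqrt{j+1})$.

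For the $T_1$-piece, $z_1 \in \mathcal T_Q^{1/2}$ supplies $\|(I-T_1)T_1^k\| \leq C_1/\sqrt{k+1}$, while $z_2 \in \mathcal R_Q$ and Theorem \ref{InclusionThm}\eqref{InclusionThmClaim2} make $T_2$ power-bounded, $\|T_2^m\| \leq N_2$. The remaining binomial sum is the expectation $\mathbb{E}[1/\sqrt{X+1}]$ with $X \sim \mathrm{Bin}(j,\alpha)$. I would bound this by Cauchy--Schwarz and the closed-form identity $\mathbb{E}[1/(X+1)] = (1-(1-\alpha)^{j+1})/((j+1)\alpha) \leq 1/((j+1)\alpha)$, which produces the bound $C_1 N_2 \sqrt{\alpha/(j+1)}$ for this piece.

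For the $T_2$-piece, $z_2 \in \mathcal R_Q$ gives Ritt-type decay $\|(I-T_2)T_2^{j-k}\| \leq C_2/(j-k+1)$ via Proposition \ref{NevanlinnaLubichProp}, so the remaining unknown is $\|T_1^k\|$. This is the main obstacle: $z_1 \in \mathcal T_Q^{1/2}$ does not obviously yield power-boundedness of $T_1$. The key observation is a telescoping identity: $T_1^k = I - \sum_{m=0}^{k-1}(I-T_1)T_1^m$, whence
\begin{equation*}
\|T_1^k\| \leq 1 + C_1\sum_{m=0}^{k-1}\frac{1}{\sqrt{m+1}} \leq (1 + 2C_1)\sqrt{k+1}.
\end{equation*}
Plugging this in and estimating $\sqrt{k+1} \leq \sqrt{j+1}$, the surviving binomial factor is $\mathbb{E}[1/(Y+1)]$ with $Y \sim \mathrm{Bin}(j,1-\alpha)$, bounded by $1/((j+1)(1-\alpha))$ by the same closed-form computation; the $(1-\alpha)$ factors cancel and the piece is $O(1/\sqrt{j+1})$. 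Adding the two estimates yields $z \in \mathcal T_Q^{1/2}$, which is exactly the claim. The cases $\alpha = 0,1$ fall out trivially from $\mathcal R_Q \subset \mathcal T_Q^{1/2}$ (via claims \eqref{InclusionThmClaim2} and \eqref{InclusionThmClaim3} of Theorem \ref{InclusionThm}) and from $z_1 \in \mathcal T_Q^{1/2}$ itself.
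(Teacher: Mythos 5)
Your proposal is correct, and the core decomposition is the same as the paper's: split $T = \alpha T_1 + (1-\alpha)T_2$ by the binomial theorem for commuting operators, split $I - T = \alpha(I - T_1) + (1-\alpha)(I - T_2)$, feed in $\|(I-T_1)T_1^k\| = O((k+1)^{-1/2})$, $\|T_1^k\| = O((k+1)^{1/2})$ (via the telescoping identity, exactly as in the paper), $\|T_2^m\| = O(1)$ and $\|(I-T_2)T_2^m\| = O((m+1)^{-1})$. Where you diverge is in the final reduction of the two binomial sums. The paper uses exact combinatorial identities of the form
\[
\sqrt{\tfrac{k+1}{j+1}}\binom{k}{j} = \sqrt{\tfrac{j+1}{k+1}}\binom{k+1}{j+1} \le \binom{k+1}{j+1}, \qquad
\tfrac{\sqrt{(k+1)(j+1)}}{k-j+1}\binom{k}{j} \le \binom{k+1}{j},
\]
which absorb the $\sqrt{\cdot}$ and $1/(k-j+1)$ weights exactly and then close via the binomial theorem. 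You instead interpret the sums as binomial expectations, apply Cauchy--Schwarz to pass from $\mathbb{E}[(X+1)^{-1/2}]$ to $\mathbb{E}[(X+1)^{-1}]^{1/2}$, and use the closed form $\mathbb{E}[(X+1)^{-1}] = (1-(1-\alpha)^{j+1})/((j+1)\alpha)$ together with the crude bound $\sqrt{k+1}\le\sqrt{j+1}$ in the second piece. Your version is a little less tight pointwise but perfectly adequate: the $(1-\alpha)$ factors cancel in the $T_2$-piece and the $\sqrt{\alpha}\le1$ factor helps in the $T_1$-piece, so the final bound is uniform in $\alpha\in(0,1)$, and you handle $\alpha\in\{0,1\}$ separately (noting $\mathcal R_Q\subset\mathcal T_Q^{1/2}$). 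What the paper's route buys is a cleaner explicit constant (and no appeal to Cauchy--Schwarz); what yours buys is a more transparent, ``recognize-the-expectation'' computation that avoids hunting for the right binomial reindexing.
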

\noindent Since always $0 \in \mathcal R_Q$, we conclude that $\mathcal T_Q^{1/2}$ is a star-like set.
\begin{proof}
Fix $z_1 \in T_Q^{1/2}$, $z_2 \in \mathcal R_Q$, and $\alpha \in
[0,1]$. Then there are constants $0 < M_1, M_2, M_3, M_4 < \infty$ such that the
estimates
\begin{equation}
\mathlabel{RittWeakTauberianInterpPropEq1}
\begin{aligned}
& \norm{z_1 Q T(z_1)^j} \leq \frac{M_1}{\sqrt{j + 1}}, \quad \norm{T(z_1)^{j} } \leq M_2 \sqrt{j + 1} \\
&  \norm{z_2 Q T(z_2)^{k - j}} \leq \frac{M_3}{k - j + 1}, \text{ and } \norm{T(z_2)^{k - j}} \leq M_4
\end{aligned}
\end{equation}
hold for all $j,k$ satisfying $0 \leq j \leq k$. For the
second estimate, it is enough to estimate the sum $I - T(z_1)^j =
\sum_{l = 0}^{j - 1}{(I - T(z_1))T(z_1)^l}$ using the first inequality
in \eqref{RittWeakTauberianInterpPropEq1} to obtain $\norm{T(z_1)^j}
\leq 1 + 2 M_1 \left (\sqrt{j + 1} - 1 \right )$.  The latter two
estimates follow from claim \eqref{InclusionThmClaim2} of Theorem
\ref{InclusionThm}.

Now, define $\tilde T(\alpha) := T(\alpha z_1 + (1 - \alpha) z_2 ) =
\alpha T(z_1) + (1 - \alpha) T(z_2)$ for $\alpha \in [0,1]$.  For
all $k$ we have
\begin{align*}
  -\left (I - \tilde T(\alpha) \right ) \tilde T(\alpha)^k & = 
\sum_{j = 0}^{k}{ \left ( \begin{matrix} k \\ j \end{matrix}\right ) \alpha^{j + 1} [z_1 Q T(z_1)^j] (1 - \alpha)^{k - j} T(z_2)^{k - j} } \\
& +  \sum_{j = 0}^{k}{ \left ( \begin{matrix} k \\ j \end{matrix}\right ) \alpha^{k} T(z_1)^j  (1 - \alpha)^{k - j + 1} [z_2 Q  T(z_2)^{k - j}] }.
\end{align*}
By using the estimates \eqref{RittWeakTauberianInterpPropEq1} we get 
\begin{align*}
 \norm{\left (I - \tilde T(\alpha) \right ) \tilde T(\alpha)^k}  & 
\leq M_1 M_4 
\sum_{j = 0}^{k}{ \frac{1}{\sqrt{j + 1}}  \left ( \begin{matrix} k \\ j \end{matrix}\right ) \alpha^{j + 1} (1 - \alpha)^{k - j} } \\
& + M_2 M_3 
\sum_{j = 0}^{k}{ \frac{\sqrt{j + 1}}{k -j + 1}  \left ( \begin{matrix} k \\ j \end{matrix}\right ) \alpha^{j} (1 - \alpha)^{k - j + 1} }.
\end{align*}
Note that $\sqrt{ \frac{k + 1}{j + 1}} \left ( \begin{matrix} k
  \\ j \end{matrix}\right ) = \sqrt{ \frac{j + 1}{k + 1}} \left
( \begin{matrix} k + 1 \\ j + 1 \end{matrix}\right ) \leq \left
( \begin{matrix} k + 1 \\ j + 1 \end{matrix}\right )$ and
$\frac{\sqrt{(k + 1) (j + 1)}}{k -j + 1} \left ( \begin{matrix} k
  \\ j \end{matrix}\right ) = \linebreak \sqrt{ \frac{j + 1}{k + 1}}
\left ( \begin{matrix} k + 1 \\ j \end{matrix}\right ) \leq \left
( \begin{matrix} k + 1 \\ j \end{matrix}\right )$ for all $0 \leq j
\leq k$. By the binomial theorem, we get $\sqrt{k + 1} \norm{\left (I
  - \tilde T(\alpha) \right ) \tilde T(\alpha)^k} \leq M_1 M_4 (1 - (1
- \alpha)^{k + 1}) + M_2 M_3 (1 - \alpha^{k + 1})$.
\end{proof}
\begin{remark} \label{SquareRootPowerGrowthRemark}
Using the same technique as in the proof of Proposition
\ref{RittWeakTauberianInterpProp}, we get the estimate $\norm{\left (
  I - T(z) \right )T(z)^j} \leq M \ln{(j + 1)}/\sqrt{j + 1}$ where $z
= \alpha z_1 + (1 - \alpha) z_2$ for $\alpha \in [0,1]$, $z_1 \in
\mathcal T_Q$, and $z_2 \in \mathcal B_Q$. Similarly, we have
$\norm{\left ( I - T(z) \right )T(z)^j} \leq M \ln{(j + 1)}/(j + 1)$
if $z_1 \in \mathcal T_Q$, and $z_2 \in \mathcal R_Q$ instead.  Hence,
if $z_1 \in \mathcal T_Q \setminus \mathcal A_Q$ and $z_2 \in \mathcal
R_Q \subset \mathcal A_Q$ we have at most
logarithmic growth in the powers $T(z)^j$ for those $z = \alpha z_1 +
(1 - \alpha) z_2$, $\alpha \in (0,1]$, that satisfy $z \in \mathcal
  A_Q$; this follow by modifying the proof of Tauberian theorem
  \cite[Theorem 1]{M-N-Y:OTCFBLO}.
 
 Note that in the proof of Proposition
 \ref{RittWeakTauberianInterpProp}, we use for $z \in \mathcal
 T_Q^{1/2}$ the estimate $\norm{T(z)^j} \leq M \sqrt{j + 1}$ which
 cannot be improved by the Tauberian approach used in \cite[Theorem
   1]{M-N-Y:OTCFBLO} even if $z \in \mathcal A_Q \cap \mathcal
 T_Q^{1/2}$ .
\end{remark}

\ConsiderThis{To consider:
  \begin{enumerate}
  \item Note that the only set that is claimed to be a subset of
    $\mathcal T_Q$ is the Ritt's set so far. It is only in prop 4.4
    that another such subset is found. But in Thm \ref{InteriorBQoThm}
it is found that this other subset is the Ritt's set.
  \item there is no usable superset for $\mathcal T_Q$ in the whole
    paper, Right? There are complements of some sets that bound a part of
    $\mathcal T_Q$.
  \item The interior of $B_Q$ consists of rays, but how about its
  boundary?  can you show that $G_Q [0,1] \subset T_Q(1/2)$. If this was
  true, then $G_Q = G_Q [0,1] \subset B_Q$ and so $G_Q = B_Q$. 
  \item  Can you find something like a resolvent condition for $G_Q \cap
  T_Q(1/2)$? Does this weaker tauberian condition imply something
  about the (peripheral) spectrum?
\item Claim \eqref{InclusionThmClaim9} can be improved if $- z^2 \in
  \mathcal B_{Q^2}^{\circ}$ but the improvement might be useless.
\item Is it possible that the Tauberian sets are star-like?
  \end{enumerate} 
  }

\section{\label{InteriorSec} Interior points of $\mathcal B_Q$ and $\mathcal R_Q$}

Recall that $\mathcal B_Q^\circ$ denotes the open interior of
$\mathcal B_Q$.
\begin{proposition} \mathlabel{MaximumPropertyProp}
  The function $\mathcal B_Q^\circ \ni z \mapsto N_z \in \R_+$ defined
  by
  \begin{equation} \mathlabel{MaxPowerFunction}
    N_z := \sup_{n \geq 1}{\norm{(1 + zQ)^n}}
  \end{equation}
  is uniformly bounded on compact subsets of $\mathcal B_Q^\circ$.
\end{proposition}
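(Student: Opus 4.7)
The plan is to exploit the convexity of $\mathcal B_Q$ (from Proposition \ref{TrivialitiesProp}) in a quantitative form: if $z = \sum_{i=1}^m \alpha_i z_i$ is a convex combination of points $z_i \in \mathcal B_Q$, then $N_z \leq \prod_{i=1}^m N_{z_i}$, independently of the coefficients $\alpha_i$. To see this I would write
\begin{equation*}
  1 + zQ = \sum_{i=1}^m \alpha_i (1 + z_i Q), \qquad \alpha_i \geq 0, \quad \sum_{i=1}^m \alpha_i = 1,
\end{equation*}
and use that the operators $1 + z_i Q$ mutually commute (being polynomials in $Q$) to expand by the multinomial theorem:
\begin{equation*}
  (1 + zQ)^n = \sum_{|\beta| = n} \binom{n}{\beta} \alpha^\beta \prod_{i=1}^m (1 + z_i Q)^{\beta_i}.
\end{equation*}
Since $\norm{(1 + z_i Q)^{\beta_i}} \leq N_{z_i}$ and $\sum_{|\beta| = n} \binom{n}{\beta} \alpha^\beta = (\alpha_1 + \cdots + \alpha_m)^n = 1$, taking norms yields $\norm{(1 + zQ)^n} \leq \prod_{i=1}^m N_{z_i}$, a bound independent of $n$ and of the coefficients $\alpha$.

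Next, given $z_0 \in \mathcal B_Q^\circ$ I would choose $r > 0$ with $\overline{\D_{z_0, r}} \subset \mathcal B_Q^\circ$ and take the vertices $z_1, z_2, z_3 \in \mathcal B_Q$ of the equilateral triangle inscribed in $\partial \D_{z_0, r}$. Its inscribed disk is $\overline{\D_{z_0, r/2}}$, so every $z \in \D_{z_0, r/2}$ is a convex combination of $z_1, z_2, z_3$, and the first step produces
\begin{equation*}
  N_z \leq N_{z_1} N_{z_2} N_{z_3} =: C_{z_0}
\end{equation*}
uniformly in $z \in \D_{z_0, r/2}$. A standard finite subcover argument applied to a compact $K \subset \mathcal B_Q^\circ$ then gives $\sup_{z \in K} N_z \leq \max_{j} C_{z_0^{(j)}} < \infty$.

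The approach requires no deep tools such as Banach--Steinhaus, Baire category, or subharmonicity of $z \mapsto \norm{(1 + zQ)^n}$; the only points needing mild care are the multinomial bookkeeping (immediate once one notes that the $1 + z_i Q$ commute) and the elementary geometric fact that the inradius of an equilateral triangle equals half its circumradius. As a byproduct, the same argument bounds $N_z$ locally by an explicit submultiplicative expression in the vertex values $N_{z_i}$, which may be useful for refinements near $\partial \mathcal B_Q$.
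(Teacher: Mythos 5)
Your proof is correct and follows essentially the same route as the paper: both rely on writing $1 + zQ$ as a convex combination of the commuting operators $1 + z_iQ$, expanding $(1+zQ)^n$ by the trinomial/multinomial theorem, and then covering a compact set by finitely many small disks. The only cosmetic difference is that you take an equilateral triangle inscribed in a circle directly (with the circumradius--inradius ratio $2$ providing the interior disk), whereas the paper first encloses a disk in a regular polygon and then decomposes the polygon into triangles; both end with the same submultiplicative bound $N_z \leq N_{z_1} N_{z_2} N_{z_3}$.
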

\begin{proof}
  Let $z_0 \in \mathcal B_Q^\circ$ be arbitrary, and define $\delta =
  \tfrac{1}{2} \mathop{dist}{(z_0, \partial \mathcal B_Q)}$. Define $D
  = \{ z \in \C : \abs{z - z_0} < \delta \}$.
  
  Then there exists a regular, convex polygon around $D$ of, say, $n$
  vertices $v_1, v_2, \ldots , v_n$ such that
  \begin{equation*}
    D \subset \mathop{conv}\{ v_1, v_2, \ldots , v_n  \} 
    \subset \mathcal B_Q^\circ
  \end{equation*}
  where $\mathop{conv}$ denotes the closed, convex hull. Now, the set
  $\mathop{conv}\{ v_1, v_2, \ldots , v_m \}$ can be written as a
  union of $m - 2$ closed triangles whos vertices are in the set $\{
  v_1, v_2, \ldots , v_m \}$. To show that $z \mapsto N_z$ is
  uniformly bounded on $D$, it is enough to show that the same
  function is bounded on all closed triangles inside $\mathcal
  B_Q^\circ$.
  
  So, let $v_1, v_2, v_3$ be the verteces of a triangle, and define $M
  = \max{(M_{v_1}, M_{v_2}, M_{v_3})}$ (see \eqref{MaxPowerFunction}).
  Recall the trinomial formula $(a + b + c)^n = \sum_{j,k = 0}^n
  {c^n_{j,k} a^j b^k c^{n - j - k}}$ where the coefficients
  $c^n_{j,k}$ are natural numbers. We have for any $\alpha, \beta,
  \gamma \in [0,1]$, $\alpha + \beta + \gamma = 1$, and $n \in \N$ the
  estimate
  \begin{align*}
    & \norm{\left ( 1 + (\alpha v_1 + \beta v_2 + \gamma v_3) Q \right
      )^n} = \norm{\left ( \alpha (1 + v_1 Q) + \beta (1 + v_2
        Q) + \gamma (1 + v_3 Q) \right )^n} \\
    & \leq \sum_{j,k = 0}^n {c^n_{j,k} \alpha^j \beta^k \gamma^{n - j
        - k} \norm{(1 + v_1 Q)^j} \cdot \norm{(1 + v_2 Q)^k} \cdot
      \norm{(1 + v_3 Q)^{n - j - k}} } \\
    & \leq M^3 (\alpha + \beta + \gamma)^n = M^3.
  \end{align*}
  Thus $\sup_{z \in \mathop{conv}(v_1, v_2, v_3)}{N_z} \leq M^3 <
  \infty$.  We have now proved that $z \mapsto N_z$ is uniformly
  bounded on any disk $D$ whose closure is in $\mathcal
  B_Q^\circ$. The proof is completed by a usual covering argument.
\end{proof} 


\begin{proposition} \label{InterioInclusionProp}
  Always $\mathcal B_Q^\circ \subset \mathcal T_Q$ for any
  quasinilpotent $Q \in \BLO(X)$.
\end{proposition}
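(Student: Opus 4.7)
The plan is to exploit analyticity of $z \mapsto (1+zQ)^{j+1}$ together with Proposition \ref{MaximumPropertyProp} via a Cauchy integral estimate for the derivative. The key identity is that
\begin{equation*}
  \ddt (1 + tQ)^{j+1} \biggm\vert_{t = z} = (j+1) Q (1 + zQ)^{j},
\end{equation*}
so the quantity we wish to bound is, up to a factor of $z_0/(j+1)$, the derivative of an operator-valued entire function whose values satisfy a $j$-independent bound near $z_0$.

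Fix $z_0 \in \mathcal B_Q^\circ$ and choose $\delta > 0$ so small that the closed disc $\overline{\D_{z_0,\delta}}$ lies in $\mathcal B_Q^\circ$. By Proposition \ref{MaximumPropertyProp}, the constant
\begin{equation*}
  M := \sup_{z \in \overline{\D_{z_0,\delta}}}\; \sup_{n \geq 1} \norm{(1 + zQ)^n}
\end{equation*}
is finite. Since $z \mapsto (1+zQ)^{j+1}$ is an entire $\BLO(X)$-valued function, the Cauchy integral formula for the first derivative gives
\begin{equation*}
  (j + 1) Q (1 + z_0 Q)^{j}
  = \frac{1}{2\pi i} \oint_{\abs{z - z_0} = \delta}
    \frac{(1 + zQ)^{j+1}}{(z - z_0)^2} \dff{z},
\end{equation*}
and estimating the integrand on the circle of radius $\delta$ yields
\begin{equation*}
  (j + 1) \norm{Q (1 + z_0 Q)^{j}} \leq \frac{1}{2\pi} \cdot 2\pi \delta \cdot \frac{M}{\delta^2} = \frac{M}{\delta}.
\end{equation*}
Multiplying by $\abs{z_0}$ gives $(j+1)\norm{z_0 Q (1 + z_0 Q)^j} \leq \abs{z_0} M/\delta$ uniformly in $j$, which is stronger than the $\limsup$ condition defining $\mathcal T_Q$. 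Hence $z_0 \in \mathcal T_Q$.

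There is no serious obstacle here: the whole argument is a standard Cauchy estimate, and the only nontrivial input is Proposition \ref{MaximumPropertyProp}, which supplies a $j$-independent bound for $(1+zQ)^{j+1}$ on a compact neighborhood of $z_0$, precisely what the Cauchy formula requires to convert boundedness of the function into a bound on its derivative. The radius $\delta$ need not be optimized, since we only need finiteness, not the sharpest constant $M_{z_0}$.
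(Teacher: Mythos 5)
Your proof is correct and is essentially the same argument as the paper's: both identify $(j+1)Q(1+z_0Q)^j$ as the derivative of the entire function $z\mapsto(1+zQ)^{j+1}$ at $z_0$, invoke Proposition \ref{MaximumPropertyProp} for a $j$-independent bound on a circle inside $\mathcal B_Q^\circ$, and conclude via the Cauchy estimate for the first derivative.
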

\begin{proof}
  By the Cauchy integral, we have
  \begin{equation*}
    f'(z_0) = \frac{1}{2\pi i} \int_\Gamma{ f'(\xi)(\xi - z_0)^{-1} d\xi }
= \frac{1}{2\pi i} \int_\Gamma{ f(\xi)(\xi - z_0)^{-2} d\xi }
  \end{equation*}
  where $\Gamma$ surrounds $z_0$ inside the domain of analyticity of
  $f$. If $\Gamma = \Gamma_\delta := \{ z \in \C : \abs{z - z_0} =
  \delta \}$, we get the estimate
  \begin{equation*}
    \norm{f'(z_0)} \leq  \frac{1}{2\pi} 
    \sup_{\xi \in \Gamma_\delta}{\norm{f(\xi)}}
    \cdot \frac{1}{\delta^2}\cdot 2 \pi \delta 
    = \frac{1}{\delta}\sup_{\xi \in \Gamma_\delta}{\norm{f(\xi)}}
  \end{equation*}
  Note that $ (n + 1) z Q (1 + z Q)^n = z \tfrac{d}{dz} (1 + z Q)^{n +
    1}$ for all $z \in \C$. Applying the above estimate gives for all
  $z \in \mathcal B_Q^\circ$ and $n \in \N$
  \begin{equation*}
    (n + 1) \norm{z Q (1 + z Q)^n}
    \leq \abs{z} \cdot \sup_{\xi \in \Gamma_\delta}{\norm{(1 + \xi Q)^{n + 1}}}
\leq \abs{z} \cdot \sup_{\xi \in \Gamma_\delta}{M_{\xi}}
  \end{equation*}
  where $\delta = \mathop{dist}(z, \partial \mathcal B_Q^\circ)$.
  Because $\Gamma_\delta$ is a compact subset of $\mathcal B_Q^\circ$,
  the claim follows from Proposition \ref{MaximumPropertyProp}.
\end{proof}
An alternative proof for Proposition \ref{InterioInclusionProp} can be
based on Proposition \ref{KreissSetInteriorProp}.
\begin{theorem} \label{InteriorBQoThm}  
  Let $Q \in \BLO(X)$ be a quasinilpotent operator.  Then {\rm (i)}
  $\mathcal R_Q = \mathcal B_Q^\circ \cup \{ 0 \}$ and $\partial
  \mathcal R_Q = \partial \mathcal B_Q^\circ \cup \{ 0 \}$; {\rm(ii)}
  $\mathcal R_Q \neq \{0 \}$ if and only if the set $\mathcal
  B_Q^\circ$ is a non-empty open sector; and {\rm(iii)}
  either $\mathcal R_Q = \{ 0 \}$, $\mathcal R_Q = \C$ (i.e., $Q =
  0$), or there exists $\phi \in [-\pi, \pi)$ and $\theta \in
    (0,\pi/2]$ such that $\mathcal R_Q = e^{i \phi} \Sector{\theta}
  \cup \{ 0 \}$.  
\end{theorem}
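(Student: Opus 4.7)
My plan is to prove the identity $\mathcal R_Q = \mathcal B_Q^\circ \cup \{0\}$ of claim (i) first, since claims (ii), (iii) and the boundary assertion all fall out structurally once this is in hand, combined with convexity of $\mathcal B_Q$ (Proposition~\ref{TrivialitiesProp}), the ray invariance $\mathcal R_Q \R_+ = \mathcal R_Q$ from claim~\eqref{InclusionThmClaim7} of Theorem~\ref{InclusionThm}, and the classical Gelfand--Hille observation recalled after Proposition~\ref{GelfandHilleProp} that $\mathcal B_Q$ contains no full line through the origin unless $Q = 0$.

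One direction is immediate: $\mathcal B_Q^\circ \subset \mathcal B_Q$ trivially and $\mathcal B_Q^\circ \subset \mathcal T_Q$ by Proposition~\ref{InterioInclusionProp}, so $\mathcal B_Q^\circ \subset \mathcal B_Q \cap \mathcal T_Q = \mathcal R_Q$ via claim~\eqref{InclusionThmClaim2} of Theorem~\ref{InclusionThm}. The substantive direction is to show that $\mathcal R_Q$ is open off the origin. Fixing $z_0 = r_0 e^{i\theta_0} \in \mathcal R_Q$ with $z_0 \neq 0$, I would use condition~\eqref{NevanlinnaLubichPropCondition3} of Proposition~\ref{NevanlinnaLubichProp} to produce $\delta > 0$ and, for each $\eta \in [0,\delta)$, a uniform bound $\norm{(1 - sQ)^{-1}} \leq C_\eta$ on the set
\begin{equation*}
\Omega_\eta := e^{i\theta_0}\Sigma_{\pi/2 + \eta} \cup \{s \in \C : \Re(e^{-i\theta_0} s) > -r_0/2\}.
\end{equation*}
For an arbitrary $z = re^{i\theta}$ with $r > 0$ and $|\theta - \theta_0| < \eta$, the substitution $\tau = sz$ together with Proposition~\ref{RittSetProp} reduces the membership $z \in \mathcal R_Q$ to the half-plane inclusion $\{\tau \in \C : \Re(e^{-i\theta}\tau) \geq 0\} \subset \Omega_\eta$. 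I would verify this by an angular computation: a point $\tau$ of the left-hand half-plane that is not already in $\{\Re(e^{-i\theta_0}\tau) > -r_0/2\}$ must satisfy $\arg(e^{-i\theta_0}\tau) \in [\pi/2, \pi/2 + (\theta - \theta_0)]$ (with the symmetric interval on the lower side when $\theta < \theta_0$), and this interval is contained in $(-\pi/2 - \eta, \pi/2 + \eta)$ because $|\theta - \theta_0| < \eta$, so $\tau \in e^{i\theta_0}\Sigma_{\pi/2+\eta}$. Thus $\mathcal R_Q$ contains an open sectorial neighbourhood of $z_0$, and since $\mathcal R_Q \subset \mathcal B_Q$ this places $z_0$ in $\mathcal B_Q^\circ$.

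Once (i) is in place the rest is short. Convexity of $\mathcal B_Q$ yields convexity of $\mathcal B_Q^\circ$, and the ray invariance of $\mathcal R_Q$ propagates to $\mathcal B_Q^\circ \R_+ \subset \mathcal B_Q^\circ$; a non-empty open convex subset of $\C$ invariant under positive dilations is either an open sector or all of $\C$, giving~(ii). For~(iii), an opening angle strictly larger than $\pi$ would place an entire line minus the origin inside $\mathcal B_Q^\circ$, and together with $0 \in \mathcal B_Q$ an entire line inside $\mathcal B_Q$; this forces $Q = 0$ by the Gelfand--Hille exclusion, ruling out $\theta > \pi/2$ in the non-degenerate case. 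The boundary statement $\partial \mathcal R_Q = \partial \mathcal B_Q^\circ \cup \{0\}$ drops out by case inspection, since $0 \in \partial \mathcal B_Q^\circ$ whenever $\mathcal B_Q^\circ$ is a non-empty proper open sector. The main obstacle is the angular half-plane inclusion in the openness step; Proposition~\ref{RittSetProp} is precisely what makes the reduction a clean half-plane comparison rather than a more delicate sector-against-sector check.
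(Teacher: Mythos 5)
Your proof is correct and follows essentially the same route as the paper's: both establish $\mathcal B_Q^\circ \cup \{0\} \subset \mathcal R_Q$ from Proposition~\ref{InterioInclusionProp} and claim~\eqref{InclusionThmClaim2} of Theorem~\ref{InclusionThm}, and both obtain openness of $\mathcal R_Q \setminus \{0\}$ from the sectorial-extension equivalence in Proposition~\ref{NevanlinnaLubichProp}, with convexity, ray invariance, and the Gelfand--Hille exclusion delivering claims {\rm(ii)} and {\rm(iii)}. The only difference is that you carry out the half-plane-versus-sector angular containment explicitly (streamlined by Proposition~\ref{RittSetProp}'s $\Re s > 0$ formulation), whereas the paper invokes that consequence more abstractly.
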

\begin{proof}
 We know by claim \eqref{InclusionThmClaim2} of Theorem
 \ref{InclusionThm} and Proposition \ref{InterioInclusionProp} that
  \begin{equation} \mathlabel{InteriorBQoThmEq1} 
    \mathcal B_Q^\circ
    \cup \{ 0 \} \subset \mathcal B_Q \cap \mathcal T_Q = \mathcal R_Q
    \subset \mathcal B_Q.  
  \end{equation}
  It follows that $\mathcal B_Q^\circ = \mathcal R_Q^\circ$ where
  $\mathcal R_Q^\circ$ denotes the open interior of $\mathcal R_Q$.
  Because $\mathcal B_Q$ is convex by Proposition
  \ref{TrivialitiesProp}, so is its interior $\mathcal
  B_Q^\circ$. Because $\mathcal R_Q$ consists of full rays by claim
  \eqref{InclusionThmClaim7} of Theorem \ref{InclusionThm}, so does
  its interior $\mathcal R_Q^\circ$.  We conclude that either
  $\mathcal R_Q^\circ = \emptyset$, or it is an open, convex set that
  consists of full rays.  In other words, $\mathcal R_Q^\circ$ is an
  open sector if $\mathcal R_Q^\circ \neq \emptyset$.  It follows from
  this that
\begin{equation*}
  \mathcal R_Q = \mathcal R_Q^{\circ} \cup \{ 0 \} \cup E \quad \text{
    with } \quad E = \cup_{\phi \in A} e^{i \phi} \R_+
\end{equation*}
where $A \subset [-\pi,\pi)$, $\mathcal R_Q^{\circ} \cap E =
  \emptyset$, and $E^\circ = \emptyset$. Thus $e^{i \phi} \R_+
  \subset \partial \mathcal R_Q$ for all $\phi \in A$.

  We proceed to show that $\mathcal R_Q = \mathcal R_Q^\circ \cup \{ 0
  \}$ by excluding the set $E$.  It is clear that this equality may
  fail only if $e^{i \phi} \R_+ \subset \partial \mathcal R_Q \cap
  \mathcal R_Q$ for some $\phi \in [-\pi,\pi)$. This is, however,
    impossible by the equivalence of
    \eqref{NevanlinnaLubichPropCondition2} and
    \eqref{NevanlinnaLubichPropCondition3a} of Proposition
    \ref{NevanlinnaLubichProp}, implying that any ray in $\mathcal
    R_Q$ is in fact contained in an open sector in $\mathcal R_Q$.
    This proves that $\mathcal R_Q = \mathcal B_Q^\circ \cup \{ 0 \}$
    and hence $\partial \mathcal R_Q = \partial \mathcal B_Q^\circ$ if
    $\mathcal R_Q \neq \{ 0 \}$.  This proves {\rm(i)}, and the latter
    two claims are consequences of this.
\end{proof}
Thus, the set $\mathcal R_Q$ is convex sector whose central angle
plays such an important role that it deserves a name:
\begin{definition}
  \label{RittAngleDef}
  Let $Q \in \BLO(X)$ be a quasinilpotent operator. If $\mathcal R_Q
  \neq \{ 0 \}$, we call the angle $\theta \in (0,\pi/2]$ the
  \emph{Ritt angle} of operator $Q$ if $\mathcal R_Q = e^{i \phi}
  \Sector{\theta} \cup \{ 0 \}$ for some $\phi \in [-\pi, \pi)$.  If
    $\mathcal G_Q = e^{i \phi} \R_+$for some $\phi \in [-\pi, \pi)$,
      then we say that the Ritt angle of $Q$ equals $0$.  (If
      $\mathcal G_Q = \{ 0 \}$, the Ritt angle of $Q$ is not
      defined. )
\end{definition}

It is now possible to improve claim \eqref{InclusionThmClaim1} of Theorem 
\ref{InclusionThm} a bit:
\begin{proposition}
\label{BoundaryExcludedProp}
  We have $\overline{\mathcal B_Q} \subset \mathcal A_Q$ and hence
  $\partial \mathcal B_Q \cap \mathcal T_Q  = \{ 0 \}$ for any
  quasinilpotent operator $Q \in \BLO(X)$.
\end{proposition}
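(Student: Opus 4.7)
My plan is first to derive the corollary from the main inclusion, and then to attack $\overline{\mathcal B_Q} \subset \mathcal A_Q$ itself. Assuming the main inclusion, any $z_0 \in \partial \mathcal B_Q \cap \mathcal T_Q$ satisfies $z_0 \in \mathcal A_Q \cap \mathcal T_Q = \mathcal R_Q$ by claim \eqref{InclusionThmClaim2b} of Theorem \ref{InclusionThm}, and Theorem \ref{InteriorBQoThm} identifies $\mathcal R_Q$ with $\mathcal B_Q^\circ \cup \{0\}$; since membership in $\partial \mathcal B_Q$ excludes $\mathcal B_Q^\circ$, this forces $z_0 = 0$.

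For the main inclusion, the case $z_0 \in \mathcal B_Q$ is handled directly by claim \eqref{InclusionThmClaim1} of Theorem \ref{InclusionThm}, which gives $\mathcal B_Q \subset \mathcal G_Q \subset \mathcal A_Q$, so the real content lies in $z_0 \in \partial \mathcal B_Q \setminus \mathcal B_Q$ with $z_0 \neq 0$. My basic tool is the Abel-mean identity
\begin{equation*}
(1 - szQ)^{-1} = \frac{1}{1+s}\sum_{k=0}^{\infty}\Bigl(\frac{s}{1+s}\Bigr)^k (1+zQ)^k, \qquad s > 0,
\end{equation*}
which yields $\|(1 - szQ)^{-1}\| \leq N_z$ uniformly in $s > 0$ for $z \in \mathcal B_Q$. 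Combined with Proposition \ref{MaximumPropertyProp}, this produces a bound on $\|(1 - szQ)^{-1}\|$ uniform simultaneously in $s > 0$ and in $z$ over any compact subset of $\mathcal B_Q^\circ$. The plan is then to transfer the bound to the boundary point $z_0$ via the Cauchy representation
\begin{equation*}
(1 - sz_0 Q)^{-1} = \frac{1}{2\pi i} \oint_\Gamma \frac{(1 - s\xi Q)^{-1}}{\xi - z_0}\,d\xi
\end{equation*}
on a contour $\Gamma$ placed inside $\mathcal B_Q^\circ$ and encircling $z_0$; the right-hand side is then bounded uniformly in $s > 0$ by $(\ell(\Gamma)/2\pi\,\mathrm{dist}(z_0,\Gamma)) \sup_{\xi \in \Gamma} N_\xi$.

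The principal obstacle I anticipate is the geometry of this contour: for $z_0 \in \partial \mathcal B_Q^\circ$ no simple closed curve encircles $z_0$ while remaining inside $\mathcal B_Q^\circ$. A naive direct approximation $z_n \to z_0$ with $z_n \in \mathcal B_Q$ also fails, because $z_0 \notin \mathcal B_Q$ forces $N_{z_n} \to \infty$ (otherwise $\|(1 + z_0 Q)^k\| \leq \liminf_n \|(1 + z_n Q)^k\| \leq \liminf_n N_{z_n}$ would place $z_0$ back into $\mathcal B_Q$), so the uniform-in-$s$ bound $N_{z_n}$ is destroyed in the limit. The way out I would try is to exploit the sectorial structure of $\overline{\mathcal B_Q}$ supplied by Theorem \ref{InteriorBQoThm}: in the nontrivial case $\overline{\mathcal B_Q}$ is a closed sector, $z_0$ lies on a boundary ray, and near $z_0$ the interior $\mathcal B_Q^\circ$ locally fills a half-plane. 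A suitable ``half-disc'' contour together with a resolvent-identity correction that balances $|z_0 - z_n|$ against $N_{z_n}$, or equivalently a Phragm\'en--Lindel\"of propagation of the interior bound to the boundary ray, should close the estimate uniformly in $s$; working out this balance is where the bulk of the technical work will reside.
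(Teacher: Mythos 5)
Your treatment of the corollary (deriving $\partial\mathcal B_Q\cap\mathcal T_Q=\{0\}$ from the main inclusion via $\mathcal A_Q\cap\mathcal T_Q=\mathcal R_Q=\mathcal B_Q^\circ\cup\{0\}$) matches the paper's and is fine, as is the reduction of the main inclusion to $z_0\in\partial\mathcal B_Q\setminus\mathcal B_Q$. But for that remaining case your proposal is not a proof: you correctly diagnose that no closed contour around a boundary point can sit inside $\mathcal B_Q^\circ$, and that the naive limit $z_n\to z_0$ destroys the bound because $N_{z_n}\to\infty$, and then you gesture at ``a half-disc contour plus a resolvent-identity correction'' or ``a Phragm\'en--Lindel\"of propagation'' without supplying the argument. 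That is exactly the step you would have to actually make, and it is genuinely delicate: a half-disc has its diameter passing through (or arbitrarily near) the boundary ray, on which you as yet have no control, so the Cauchy estimate does not close.

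What you are missing is already available in the paper as a black box, and it makes boundary contour surgery unnecessary. Claim \eqref{InclusionThmClaim4} of Theorem \ref{InclusionThm} gives $\mathcal G_Q\,\C_+\subset\mathcal A_Q^k$ and in particular $\mathcal R_Q\,\C_+\subset\mathcal A_Q$. If $\mathcal B_Q^\circ\neq\emptyset$, then by Theorem \ref{InteriorBQoThm} the set $\mathcal R_Q\setminus\{0\}=\mathcal B_Q^\circ$ is an open sector of some positive central angle $2\theta$, and $\partial\mathcal B_Q=\partial\mathcal R_Q$ consists of its two bounding rays. But $\mathcal R_Q\,\C_+$ is an open sector of central angle $2\theta+\pi$, which \emph{strictly} contains $\mathcal R_Q$ together with both of its bounding rays; hence $\partial\mathcal B_Q\subset\mathcal R_Q\,\C_+\subset\mathcal A_Q$ by simple angular geometry (``by proximity,'' in the paper's words), and $\overline{\mathcal B_Q}=\mathcal B_Q\cup\partial\mathcal B_Q\subset\mathcal A_Q$. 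The underlying mechanism is the Hille--Yosida estimate: $z\in\mathcal G_Q$ forces $\|(I-szQ)^{-1}\|\le M/\cos\phi$ for $s=|s|e^{i\phi}\in\C_+$, so boundedness along a ray of $\mathcal G_Q$ automatically spreads to all rays within a half-plane of it -- this is the ``extra $\pi$ of opening'' that pushes $\mathcal A_Q$ past the boundary of $\mathcal B_Q$, and it is precisely what a purely local Cauchy-integral or Phragm\'en--Lindel\"of argument in the $z$-variable cannot see. You also skipped the degenerate case $\mathcal B_Q^\circ=\emptyset$, where $\mathcal B_Q$ collapses to a ray segment and the conclusion follows instead from $\overline{\mathcal B_Q}\subset\mathcal B_Q\R_+\subset\mathcal A_Q$.
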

\begin{proof}
  We show first that $\overline{\mathcal B_Q} \subset \mathcal A_Q$.
  This claim clearly holds when $\mathcal B_Q^\circ = \C$ but this
  situation happens only if $Q = 0$ by Remark 
  \ref{ZeroBoundaryPointProp}. 

  Let us first consider the case $\mathcal B_Q^\circ = \emptyset$.
  Then by convexity, $\mathcal B_Q \subset e^{i \phi} \clos{\R}_+$ for
  some $\phi \in [-\pi, \pi)$. If $\mathcal B_Q = \{ 0 \}$, there is
    nothing to prove. Otherwise $\mathcal B_Q$ is a possibly
    non-closed, possibly finite interval whose one end is at the
    origin, and then $\overline{\mathcal B_Q} \subset \mathcal B_Q
    \R_+ \subset \mathcal A_Q^k$ for all $k \in \N$ by claim
    \eqref{InclusionThmClaim1} of Theorem \ref{InclusionThm}.
  
  Suppose now that $\mathcal B_Q^\circ \neq \emptyset$.  By Theorem
  \ref{InteriorBQoThm}, we have $\partial \mathcal B_Q = \partial
  \mathcal R_Q$ (that is a set consisting of two rays), and thus claim
  \eqref{InclusionThmClaim4} of Theorem \ref{InclusionThm} implies
  that $\partial \mathcal B_Q \subset \mathcal A_Q$ by proximity.  We
  conclude that $\overline{\mathcal B_Q} = \mathcal B_Q \cup \partial
  \mathcal B_Q \subset \mathcal A_Q$ as claimed.
  
  We have now shown that $\partial \mathcal B_Q \subset \mathcal A_Q$
  for all quasinilpotent operators $Q$. If $z$ is in $\partial
  \mathcal B_Q \cap \mathcal T_Q$, then $z \in \mathcal A_Q \cap
  \mathcal T_Q = \mathcal R_Q$ by claim \eqref{InclusionThmClaim2b} of
  Theorem \ref{InclusionThm}. We also have $z \in \partial \mathcal
  R_Q$ because $\partial \mathcal R_Q = \partial \mathcal B_Q$, and
  hence $z = 0$ follows since $\mathcal R_Q = \mathcal B_Q^\circ \cup
  \{ 0 \}$ by Theorem \ref{InteriorBQoThm}.
\end{proof}
\begin{proposition} \label{UniformlyBPonRayProp}
  If $\sup_{r \geq 0}{N_{rz}} < \infty$ in \eqref{MaxPowerFunction}
  for some $z \in \mathcal B_Q$, then $z \in \mathcal R_Q$. In
  particular, $\sup_{r \geq 0}{N_{rz}} = \infty$ for $z \in \partial
  \mathcal B_Q$.
\end{proposition}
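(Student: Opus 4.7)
The plan is to reduce everything to Proposition \ref{RittSetProp}: I will show that the hypothesis $M := \sup_{r \geq 0} N_{rz} < \infty$ forces $\sup_{\Re s > 0}\norm{(I - szQ)^{-1}} \leq M$, whence $z \in \mathcal R_Q$. The second assertion then drops out by contraposition together with the structure theorem for $\mathcal R_Q$ proved in Theorem \ref{InteriorBQoThm}.

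The central trick is a Möbius-type change of variable that converts a uniform power bound along the ray $\R_+ z$ into a uniform resolvent bound on the right half-plane. For each fixed $r > 0$, power-boundedness $\norm{(I + rzQ)^j} \leq M$ combined with the Neumann series gives
\begin{equation*}
  \norm{(I - \lambda (I + rzQ))^{-1}} \leq \frac{M}{1 - \abs{\lambda}} \quad \text{ for all } \quad \abs{\lambda} < 1.
\end{equation*}
Now rewrite $I - \lambda(I + rzQ) = (1 - \lambda)\bigl(I - \tfrac{\lambda r}{1 - \lambda} zQ\bigr)$ and set $s := \lambda r/(1 - \lambda)$, i.e., $\lambda = s/(s + r)$. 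For $\Re s > 0$ and $r > 0$ one has $\abs{s + r} > \abs{s}$, hence $\abs{\lambda} < 1$, and the estimate above becomes
\begin{equation*}
  \norm{(I - szQ)^{-1}} = \frac{r}{\abs{s + r}} \norm{(I - \lambda(I + rzQ))^{-1}} \leq \frac{rM}{\abs{s + r} - \abs{s}}.
\end{equation*}
Since $\abs{s + r} - \abs{s} = (2 r \Re s + r^2)/(\abs{s+r} + \abs{s})$, the factor $r/(\abs{s+r} - \abs{s}) = (\abs{s+r} + \abs{s})/(2\Re s + r)$ tends to $1$ as $r \to \infty$ for every fixed $s$ with $\Re s > 0$. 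Letting $r \to \infty$ yields $\norm{(I - szQ)^{-1}} \leq M$ uniformly for $\Re s > 0$, and Proposition \ref{RittSetProp} gives $z \in \mathcal R_Q$.

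For the boundary assertion, assume $z \in \partial \mathcal B_Q$ with $z \neq 0$. If $z \notin \mathcal B_Q$, then $N_z = \infty$ trivially. Otherwise $z \in \partial \mathcal B_Q \cap \mathcal B_Q$, hence $z \notin \mathcal B_Q^\circ$; by Theorem \ref{InteriorBQoThm}, $\mathcal R_Q = \mathcal B_Q^\circ \cup \{0\}$, so $z \notin \mathcal R_Q$, and the contrapositive of the first part forces $\sup_{r \geq 0} N_{rz} = \infty$.

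The only genuine step of substance is discovering the substitution $\lambda = s/(s+r)$; the rest is routine resolvent algebra and a one-parameter limit. I do not anticipate real obstacles — the delicate point is simply observing that one should exploit the \emph{uniformity in $r$} of the power bound (not just power-boundedness at the single point $z$) by letting $r \to \infty$ after the change of variable, which is precisely what converts a Neumann-series resolvent bound that blows up as $\abs{\lambda} \to 1$ into a bound uniform on the whole half-plane $\{\Re s > 0\}$.
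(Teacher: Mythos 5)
Your proof is correct and takes essentially the same route as the paper: you derive precisely the bound $\norm{(I - szQ)^{-1}} \leq Mr/(\abs{s + r} - \abs{s})$ (the paper's displayed inequality, obtained there via the Kreiss resolvent condition rather than a Neumann series, but these are the same computation), and then let $r \to \infty$ to get $\norm{(I-szQ)^{-1}} \leq M$ on $\C_+$ and invoke Proposition \ref{RittSetProp}. The boundary assertion by contraposition via Theorem \ref{InteriorBQoThm} also matches.
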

\begin{proof}
  Defining $M := \sup_{r \geq 0}{N_{rz}}$, we see that $T(rz)$ satisfies
the Kreiss condition
\begin{equation*}
\norm{\left (\lambda - T(r z) \right )^{-1}} \leq \frac{M}{\abs{\lambda} - 1}
\text{ for all } \abs{\lambda} > 1 \text{ and } r > 0.
\end{equation*}
After some manipulations, this is equivalent with
\begin{equation}
\mathlabel{UniformlyBPonRayPropEq1}
\norm{\left (I - s z Q \right )^{-1}} \leq \frac{M r}{\abs{r + s} - \abs{s}}
\text{ for all } r > 0 \text{ and } s \text{ satisfying } \abs{r + s} >  \abs{s}.
\end{equation}
Now, $\abs{r + s} > \abs{s}$ for $r > 0$ if and only if $\Re{s} >
-r/2$.  Letting $r \to +\infty$ in \eqref{UniformlyBPonRayPropEq1}
implies $\norm{\left (I - s z Q \right )^{-1}} \leq M$ for all $s \in
\C_+$, and it follows from Proposition \ref{RittSetProp} that $z \in
\mathcal R_Q$.
\end{proof}

\ConsiderThis{Consider this:
  \begin{enumerate}
  \item Considering Prop \ref{UniformlyBPonRayProp}: What if $\sup_{r
    \in [0,R]}{N_{rz}} < \infty$? What do you get then? Try $r > 0$
    and $s = i \omega$. Then you will get a lot of growth on the
    imaginary axis... hence no Ritt.
  \item Interior of $\mathcal G_Q$ is what you expect it to be.
    Show that there is no open subset of $\mathcal G_Q \setminus \mathcal B_Q$. 
  \item suppose that $f(t) = e^{t Q}$ is a bounded, analytic, operator
    valued function defined on domain $\mathcal G_Q^\circ \cap \{ 0
    \}$. Can it happen that this $f$ has a bounded extension to
    $\overline{\mathcal G_Q}$?
  \end{enumerate}
}

\section{\label{InteriorFSec} Interior points of $\mathcal G_Q$, $\mathcal K_Q$, and $\mathcal K_Q^\infty$}

Let us recall \cite[Definition 3.7.3]{A-B-H-N:VVLTCP} of bounded
analytic semigroups:
\begin{definition} \label{AnalSemiDef}
  We say that a strongly continuous semigroup $S(t)$, $t \in
  \clos{\R}_+$, is a \emph{bounded analytic (holomorphic)
    semigroup} of angle $\theta \in (0, \pi/2]$ if $S(t)$ has a
  bounded, analytic extension to $\Sector{\theta'}$ for all $\theta' \in
  (0,\theta)$.
\end{definition}
In the context of this paper, the semigroup is given by $S_z(t) = e^{t
  z Q}$ for quasinilpotent $Q$ and $z \in \mathcal G_Q$.  Let us
recall a classical result from function theory that we need in proving
Theorem \ref{SemigroupInteriorThm}:
\begin{proposition} \label{PhragmenLindelofThm} 
Let $X$ be a Banach space.  Suppose that $A, \omega, \tau > 0$, $
\theta \in (0,\pi)$, and that $\Sector{\theta}$ is defined by
\eqref{BalancedOpenSector}. Suppose that th $\mathcal L(X)$-valued
function $f$ is analytic in $\Sector{\theta}$ and continuous in
$\overline{\Sector{\theta}}$.  Suppose that the inequalities
\begin{equation} \mathlabel{PhragmenLindelofThmEq1} 
  2 \theta < \frac{\pi}{\omega} \quad \text{ and } \quad 
  \norm{f(z)} \leq A e^{\tau \abs{z}^\omega} \text{ for all } z \in \Sector{\theta} 
\end{equation}
hold. If $\norm{f(z)} \leq 1$ for all $z \in \partial \Sector{\theta} $, then
also $\norm{f(z)} \leq 1$ for all $z \in \Sector{\theta}$.
\end{proposition}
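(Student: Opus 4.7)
The plan is to prove this classical Phragmén--Lindelöf estimate by first reducing to the scalar case via Hahn--Banach, and then executing the standard auxiliary-function trick with a multiplier of the form $e^{-\varepsilon z^\beta}$ for a carefully chosen exponent $\beta$.

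First I would reduce to scalars. For arbitrary $x \in X$ with $\norm{x} \leq 1$ and $\ell \in X^*$ with $\norm{\ell} \leq 1$, the function $g(z) := \ell(f(z) x)$ is a scalar function that is analytic in $\Sector{\theta}$, continuous on $\overline{\Sector{\theta}}$, satisfies $|g(z)| \leq \norm{f(z)}$, and hence inherits both the boundary bound $\abs{g(z)} \leq 1$ on $\partial \Sector{\theta}$ and the growth bound $|g(z)| \leq A e^{\tau |z|^\omega}$ in $\Sector{\theta}$. Once I prove $\abs{g(z)} \leq 1$ on $\Sector{\theta}$, taking suprema over $x$ and $\ell$ and invoking Hahn--Banach (in the form $\norm{f(z) x} = \sup_{\norm{\ell} \leq 1} \abs{\ell(f(z) x)}$) yields $\norm{f(z)} \leq 1$.

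Next, for the scalar $g$, I would use the hypothesis $2\theta < \pi/\omega$ to choose an auxiliary exponent $\beta$ with $\omega < \beta < \pi/(2\theta)$; this is possible because $\omega < \pi/(2\theta)$. With the principal branch of $z^\beta$ on $\Sector{\theta}$, write $z = r e^{i\phi}$ with $\abs{\phi} \leq \theta$, so that $\Re(z^\beta) = r^\beta \cos(\beta \phi) \geq r^\beta \cos(\beta \theta) =: r^\beta \, c$ with $c > 0$ because $\beta \theta < \pi/2$. For each $\varepsilon > 0$ I would define
\begin{equation*}
  g_\varepsilon(z) := g(z) \, e^{-\varepsilon z^\beta},
\end{equation*}
which is analytic in $\Sector{\theta}$ and continuous on the closure. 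The boundary estimate becomes
\begin{equation*}
  \abs{g_\varepsilon(z)} \leq \abs{g(z)} \, e^{-\varepsilon r^\beta \cos(\beta \theta)} \leq 1 \quad \text{on } \partial \Sector{\theta},
\end{equation*}
while in the interior
\begin{equation*}
  \abs{g_\varepsilon(z)} \leq A \, e^{\tau r^\omega - \varepsilon c r^\beta} \longrightarrow 0 \quad \text{as } r \to \infty,
\end{equation*}
uniformly in $\phi$, because $\beta > \omega$. Consequently $g_\varepsilon$ is bounded on $\overline{\Sector{\theta}}$ and, being analytic inside with modulus at most $1$ on the boundary, the classical maximum modulus principle applied to a large truncated sector (letting the arc radius tend to infinity, which is legal thanks to the decay just established) gives $\abs{g_\varepsilon(z)} \leq 1$ everywhere in $\Sector{\theta}$.

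Finally, I would let $\varepsilon \to 0^+$: for fixed $z \in \Sector{\theta}$, $e^{-\varepsilon z^\beta} \to 1$, so $\abs{g(z)} \leq 1$. Combined with the Hahn--Banach reduction above, this proves $\norm{f(z)} \leq 1$ throughout $\Sector{\theta}$. The only real subtlety is the choice of $\beta$, which is exactly where the hypothesis $2 \theta \omega < \pi$ is consumed; once $\beta$ is secured, the auxiliary function $g_\varepsilon$ does all the work and the argument is routine complex analysis. The Banach-space-valued character of $f$ poses no obstacle beyond the initial reduction.
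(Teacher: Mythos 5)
Your proposal is correct and follows essentially the same route as the paper's proof: a Hahn--Banach reduction to scalar functionals, followed by the classical Phragm\'en--Lindel\"of multiplier trick with an exponent $\beta$ chosen in the window $\omega < \beta < \pi/(2\theta)$ opened by the hypothesis $2\theta\omega < \pi$. The only cosmetic difference is that the paper uses the auxiliary factor $e^{-\varepsilon(z+1)^\beta}$ rather than your $e^{-\varepsilon z^\beta}$; both yield $\Re$ of the exponent bounded below by a positive multiple of $|z|^\beta$ (respectively $|z+1|^\beta$) on the sector, and both give the needed decay at infinity and the boundary bound, so the argument goes through identically.
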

\begin{proof}
  By considering the functions $z \mapsto \left <x^*, f(z) x \right
  >_{X^*,X}$ for $x \in X$ and $x^* \in X^*$, the claim can be reduced
  to the scalar case which we prove next.  Let $\theta < \frac{\pi}{2
    \omega}$ and take $z \in \Sector{\theta}$.  Then for any $\beta$
  satisfying $\omega < \beta < \frac{\pi}{2 \theta}$ we have the
  inequality
  \begin{equation*}
   \Re{(z + 1)^\beta} > \abs{z + 1}^\beta \delta 
  \end{equation*}
  where $\delta := \cos{\beta \theta} > 0$ since $\abs{\arg{(z + 1)}}
  < \abs{\arg{z}} \leq \theta$ and $0 < \beta \theta < \pi/2$. Define
  now $h_\epsilon(z) = e^{- \epsilon (z + 1)^\beta}$ for any $\epsilon
  > 0$.  This is an analytic function for $z \in \Sector{\theta} \neq \C$,
  and it is continuous in $\overline{\Sector{\theta}}$.  Then for all $z \in
  \overline{\Sector{\theta}}$ we have the estimate
  \begin{equation*}
    \abs{f(z) h_\epsilon(z)} 
    \leq Ae^{\tau \abs{z}^\omega - \epsilon \Re{(z + 1)^\beta}}
    \leq Ae^{\tau \abs{z}^\omega - \epsilon  \delta \abs{z + 1}^\beta}
\to 0 \text{ as } \abs{z} \to \infty.
  \end{equation*}
The maximum modulus theorem says that
\begin{equation*}
  \max_{z \in \overline{\Sector{\theta}}} \abs{f(z) h_\epsilon(z)}
=   \max_{z \in \partial {\Sector{\theta}}} \abs{f(z) h_\epsilon(z)}
\leq \max_{z \in \partial {\Sector{\theta}}} \abs{h_\epsilon(z)}
\end{equation*}
for all $\epsilon > 0$. Letting $\epsilon \to 0+$ completes the proof.
\end{proof}
  Note that $f(z) = e^z$ satisfies $\abs{f(z)} \leq A e^{\tau
    \abs{z}^\omega}$ for $A = \omega = \tau = 1$ and all $z \in C$. If
  $2 \theta = \pi/\omega = \pi$ and $\Sector{\theta} = \C_+$, we see
  that $f$ is bounded on $\partial \Sector{\theta} = i \R$ but not on
  $\Sector{\theta}$.  Hence the inequality $2 \theta <
  \frac{\pi}{\omega}$ cannot be replaced by the weaker $2\theta \leq
  \frac{\pi}{\omega}$ in Proposition \ref{PhragmenLindelofThm}.

\begin{theorem} \label{SemigroupInteriorThm}
  Let $Q \in \BLO(X)$ be a quasinilpotent operator. Then the following
  are equivalent for $z \in \C$:
  \begin{enumerate}
  \item \label{SemigroupInteriorThmClaim1} $z \in  \mathcal G_Q^\circ$;
  \item \label{SemigroupInteriorThmClaim2} $z \neq 0$ and the
    operator $zQ$ generates an analytic semigroup $t \mapsto e^{t z
      Q}$; and
  \item \label{SemigroupInteriorThmClaim3} $z \in  \mathcal R_Q\setminus\{0\}$.
  \end{enumerate}
\end{theorem}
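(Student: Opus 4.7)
The plan is to prove the equivalences by the cycle $(iii)\Rightarrow(i)\Rightarrow(ii)\Rightarrow(iii)$. The implication $(iii)\Rightarrow(i)$ is essentially a corollary of the structure theorems already in place: by Theorem \ref{InteriorBQoThm}, when $\mathcal R_Q\neq\{0\}$ the set $\mathcal R_Q\setminus\{0\}$ is an \emph{open} sector, and by claim \eqref{InclusionThmClaim1} of Theorem \ref{InclusionThm} we have $\mathcal R_Q\subset\mathcal B_Q\subset\mathcal G_Q$; consequently $\mathcal R_Q\setminus\{0\}$ is an open subset of $\mathcal G_Q$ and hence lies inside $\mathcal G_Q^\circ$.

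For $(i)\Rightarrow(ii)$ I would invoke Proposition \ref{PhragmenLindelofThm}. Assuming $Q\neq 0$ (the case $Q=0$ being trivial), Remark \ref{ZeroBoundaryPointProp} tells us that $\mathcal G_Q$ is a convex sector with apex at the origin and opening at most $\pi$, so in particular any $z\in\mathcal G_Q^\circ$ must be nonzero, and there is some $\eta\in(0,\pi/2)$ with $e^{\pm i\eta}z\in\mathcal G_Q$. Setting $M:=\max\bigl(\sup_{t\geq 0}\|e^{te^{i\eta}zQ}\|,\sup_{t\geq 0}\|e^{te^{-i\eta}zQ}\|\bigr)<\infty$, the entire $\BLO(X)$-valued function $f(w):=M^{-1}e^{wzQ}$ satisfies $\|f(w)\|\leq 1$ on $\partial\Sector{\eta}=e^{i\eta}\clos{\R}_+\cup e^{-i\eta}\clos{\R}_+$ and enjoys the crude bound $\|f(w)\|\leq M^{-1}e^{|w|\|zQ\|}$ of exponential order $\omega=1$ in $\Sector{\eta}$. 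Since $2\eta<\pi=\pi/\omega$, Proposition \ref{PhragmenLindelofThm} yields $\|e^{wzQ}\|\leq M$ throughout $\Sector{\eta}$, which in view of Definition \ref{AnalSemiDef} exhibits $zQ$ as the generator of a bounded analytic semigroup of angle $\eta$.

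For $(ii)\Rightarrow(iii)$ I would call on the classical characterisation of generators of bounded analytic semigroups by sectorial resolvent estimates: if $zQ$ generates a bounded analytic semigroup of angle $\theta>0$, then for every $\theta'\in(0,\theta)$ one has $\sup_{\lambda\in\Sector{\pi/2+\theta'}}\|\lambda(\lambda-zQ)^{-1}\|<\infty$. The substitution $\lambda=1/s$ maps $\Sector{\pi/2+\theta'}$ to itself and satisfies the algebraic identity $\lambda(\lambda-zQ)^{-1}=(1-szQ)^{-1}$, so we obtain $\sup_{\Re s>0}\|(1-szQ)^{-1}\|<\infty$, from which Proposition \ref{RittSetProp} delivers $z\in\mathcal R_Q$; the condition $z\neq 0$ is already part of (ii). The main technical content lies in the Phragm\'en--Lindel\"of step in $(i)\Rightarrow(ii)$, where the semigroup bound is known only on the two boundary rays of $\Sector{\eta}$ and must be propagated to the interior of the sector; the angle requirement $2\eta<\pi$ needed for Proposition \ref{PhragmenLindelofThm} is automatic since the opening of $\mathcal G_Q$ is at most $\pi$ by Remark \ref{ZeroBoundaryPointProp}.
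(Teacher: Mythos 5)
Your proof is correct and follows essentially the same route as the paper: the Phragm\'en--Lindel\"of step for (i)$\Rightarrow$(ii), the classical sectorial-resolvent characterisation of bounded analytic semigroups combined with Proposition~\ref{RittSetProp} for the link between (ii) and (iii), and the inclusion $\mathcal R_Q\setminus\{0\}=\mathcal B_Q^\circ\subset\mathcal G_Q^\circ$ from Theorems~\ref{InteriorBQoThm} and~\ref{InclusionThm} for (iii)$\Rightarrow$(i). The only cosmetic differences are that you close the cycle rather than proving (ii)$\Leftrightarrow$(iii) directly, and you make explicit the substitution $\lambda=1/s$ and the reason $z\neq 0$ once $Q\neq 0$.
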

\noindent Thus $\mathcal R_Q = \mathcal G_Q^\circ \cup \{ 0 \}$. Moreover, we
have $\mathcal R_Q = \{ 0 \}$ if and only if $\mathcal G_Q \subset
e^{i \phi} \clos{\R}_+$ for some $\phi \in [-\pi, \pi)$.
\begin{proof}
  \eqref{SemigroupInteriorThmClaim1} $\Rightarrow$
  \eqref{SemigroupInteriorThmClaim2}: Take $z \in G_Q^\circ$, and let
  $0 < \theta < \pi/2$ such that both $ze^{i\theta}, ze^{-i\theta} \in
  G_Q^\circ$. It follows that the analytic function $g(t) := e^{t zQ}$
  is bounded on both rays $t \in e^{\pm i \theta} \R_+$.  Since
  $\norm{e^{t zQ}} \leq e^{ \tau \abs{t}^\omega}$ with $\tau =
  \norm{zQ}$ and $\omega = 1$, Proposition \ref{PhragmenLindelofThm}
  implies now that $\sup_{t \in \Sector{\theta}}{\norm{g(t)}} < \infty$;
  hence $zQ$ generates a bounded analytic semigroup with angle $\geq
  \theta$.
  
  \eqref{SemigroupInteriorThmClaim2} $\Leftrightarrow$
  \eqref{SemigroupInteriorThmClaim3}: This follows from Proposition
  \ref{RittSetProp} because the semigroup $S_z(t) := e^{t z Q}$
  satisfies the conditions of Definition \ref{AnalSemiDef} for some
  angle in $(0, \pi/2]$ if and only if $\sup_{\Re{s}> 0}{\norm{(1 - s
      zQ)^{-1}}} < \infty$; see, e.g., \cite[Corollary
  3.7.12]{A-B-H-N:VVLTCP}.
  
  \eqref{SemigroupInteriorThmClaim3} $\Rightarrow$
  \eqref{SemigroupInteriorThmClaim1}: Because $\mathcal B_Q \subset
  \mathcal G_Q$ by claim \eqref{InclusionThmClaim1} of Theorem
  \ref{InclusionThm}, also their interiors satisfy $\mathcal B_Q^\circ
  \subset \mathcal G_Q^\circ$.  Thus by Theorem
  \ref{InteriorBQoThm} we get $\mathcal R_Q \setminus \{0\} =
  \mathcal B_Q^\circ \subset \mathcal G_Q^\circ$.
\end{proof}

\begin{proposition} \label{KreissSetInteriorProp}
A quasinilpotent $Q \in \BLO(X)$ satisfies $\mathcal R_Q = \mathcal
K_Q^{\circ} \cup \{ 0 \}$.
\end{proposition}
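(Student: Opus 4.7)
The plan is to establish the two inclusions $\mathcal R_Q \subset \mathcal K_Q^\circ \cup \{0\}$ and $\mathcal K_Q^\circ \subset \mathcal R_Q$ separately, using what we already know about $\mathcal R_Q$ from Theorem \ref{InteriorBQoThm} together with the Kreiss--sector trick of claim \eqref{InclusionThmClaim8} of Theorem \ref{InclusionThm}.

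For the first inclusion, I would simply observe that $\mathcal B_Q \subset \mathcal K_Q$ by claim \eqref{InclusionThmClaim8}, so taking open interiors gives $\mathcal B_Q^\circ \subset \mathcal K_Q^\circ$. Since Theorem \ref{InteriorBQoThm} tells us $\mathcal R_Q = \mathcal B_Q^\circ \cup \{0\}$, this immediately yields $\mathcal R_Q \subset \mathcal K_Q^\circ \cup \{0\}$.

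For the reverse inclusion, fix $z_0 \in \mathcal K_Q^\circ$; I want to show $z_0 \in \mathcal R_Q$. Using Proposition \ref{RittSetProp}, it is enough to produce a uniform bound $\sup_{\Re s > 0}\norm{(1 - s z_0 Q)^{-1}} < \infty$. Choose $\eta > 0$ small enough so that both $e^{i\eta} z_0$ and $e^{-i\eta} z_0$ lie in $\mathcal K_Q$. The estimate \eqref{TrivialitiesPropEq1} used in the proof of claim \eqref{InclusionThmClaim8} shows that the Kreiss condition at a point $z \in \mathcal K_Q$ implies, for every $\phi \in [0, \pi/2)$, the sectorial bound $\sup_{s \in \Sigma_\phi} \norm{(1 - s z Q)^{-1}} < \infty$. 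Applying this to $z = e^{\pm i \eta} z_0$ gives bounded resolvents for $s$ in the rotated sectors $e^{i\eta}\Sigma_\phi$ and $e^{-i\eta}\Sigma_\phi$.

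The key geometric step is then to note that for $\phi < \pi/2$ chosen close enough to $\pi/2$ so that $\phi + \eta > \pi/2$, the union $e^{i\eta}\Sigma_\phi \cup e^{-i\eta}\Sigma_\phi$ covers the open right half-plane $\C_+ = \Sigma_{\pi/2}$. Hence we obtain $\sup_{\Re s > 0} \norm{(1 - s z_0 Q)^{-1}} < \infty$, and Proposition \ref{RittSetProp} gives $z_0 \in \mathcal R_Q$, as desired. Since $0 \in \mathcal R_Q$ trivially, this completes $\mathcal K_Q^\circ \cup \{0\} \subset \mathcal R_Q$.

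The only slightly delicate point is the second inclusion, and within it the geometric covering $\C_+ \subset e^{i\eta}\Sigma_\phi \cup e^{-i\eta}\Sigma_\phi$; everything else is either a direct application of previously established results or a straightforward consequence of $\mathcal K_Q^\circ$ being open.
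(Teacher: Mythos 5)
Your proposal is correct and is, in substance, the same argument as the paper's. The paper also reduces the forward inclusion $\mathcal K_Q^\circ \subset \mathcal R_Q$ to a uniform bound on $\C_+$ via Proposition \ref{RittSetProp}, obtained by rotating $s$ through $e^{\pm i\delta}$ (using $e^{\pm i\delta}z_0 \in \mathcal K_Q$) and applying the Kreiss estimate together with the observation that $\abs{s+1}-\abs{s}$ is bounded away from $0$ on any $\Sector{\phi}$ with $\phi < \pi/2$ (inequality \eqref{TrivialitiesPropEq1}); your sector-covering step $\C_+ \subset e^{i\eta}\Sector{\phi} \cup e^{-i\eta}\Sector{\phi}$ is just the paper's remark that the angle only needs to vary through $(-\pi/2 + \delta/2, \pi/2 - \delta/2)$ stated in a different coordinate. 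The reverse inclusion is also handled the same way: the paper invokes openness of $\mathcal R_Q \setminus\{0\}$ from Theorem \ref{InteriorBQoThm} together with $\mathcal R_Q \subset \mathcal B_Q \subset \mathcal K_Q$, and you pass through the interior of the middle set, which amounts to the same thing.
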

\begin{proof}
Let $s = r e^{i \phi} \in \C_+$ and define $\theta$ by $s + 1 = \abs{s
  + 1}e^{i \theta}$. Then $\abs{s + 1} > \abs{s}$, $\Re{s} > 0$, and
we obtain
\begin{equation*}
\frac{1}{\abs{s + 1} - \abs{s}} < \frac{2 \abs{s + 1}}{2 \Re{s} + 1}
< \frac{2 \abs{s + 1}}{\Re{(s + 1)}} = \frac{2}{\cos{\theta}} \leq
\frac{2}{\cos{\phi}} 
\end{equation*}
where the last inequality holds because $0 \leq \abs{\theta} \leq
\abs{\phi} < \pi / 2$ and thus $0 < \cos{\phi} \leq \cos{\theta}$.
For $z \in \mathcal K_Q$ we thus have a constant $M_z < \infty$ such
that
\begin{equation*}
  \norm{(I - szQ)^{-1}} \leq \frac{M_z}{\cos{\phi}} \quad \text{ for all } \quad  s =  r e^{i \phi} \in \C_+.
\end{equation*}
If $z \in \mathcal K_Q^\circ$, we have a $\delta > 0$ such that
$e^{\pm i \delta} z \in \mathcal K_Q$.  Writing $s_{\pm} := s e^{\pm
  i \delta} = \abs{s} e^{i (\phi \pm \delta)}$ we see that there is
a constant $M < \infty$ such that 
\begin{equation*}
  \norm{(I - s_{\pm} zQ)^{-1}} \leq \frac{M}{\cos{\phi}} \quad \text{ for all } \quad  s =  r e^{i \phi} \in \C_+.
\end{equation*}
Varying $\phi$ through the interval $(- \pi / 2 + \delta/2, \pi / 2 -
\delta/2)$ (in which $1/\cos{\phi}$ is bounded) proves that $\sup_{s'
  \in \C_+} {\norm{(I - s' zQ)^{-1}}} < \infty$, and by Proposition
\ref{RittSetProp} we have $z \in \mathcal R_Q$. The claim follows
since $\mathcal R_Q \setminus \{ 0 \}$ is open by Theorem
\ref{InteriorBQoThm}.
\end{proof}

\begin{proposition} \label{IteratedKreissSetConvexProp}
The set $\mathcal K_Q^\infty$ is convex for all quasinilpotent $Q \in \BLO(X)$.
\end{proposition}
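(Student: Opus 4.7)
The plan is to use the characterization extracted from the proof of Proposition~\ref{TrivialitiesProp}: $z \in \mathcal K_Q^\infty$ if and only if, for every $\phi \in [-\pi, \pi)$, the operator
\begin{equation*}
B_\phi(z) := e^{i\phi} - 1 + e^{i\phi} z Q
\end{equation*}
generates a bounded $C_0$-semigroup with an upper bound $M_z$ that is \emph{independent} of $\phi$. This equivalence has already been invoked by the authors to establish the star-likeness of $\mathcal K_Q^\infty$, so I will simply borrow it.

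Given $z_1, z_2 \in \mathcal K_Q^\infty$ with respective bounds $M_{z_1}, M_{z_2}$, and $\alpha \in [0,1]$, set $z := \alpha z_1 + (1-\alpha) z_2$. The key algebraic observation is that
\begin{equation*}
B_\phi(z) = \alpha B_\phi(z_1) + (1 - \alpha) B_\phi(z_2),
\end{equation*}
and $B_\phi(z_1), B_\phi(z_2)$ commute since both are linear combinations of $I$ and $Q$. Hence for every $t \geq 0$,
\begin{equation*}
e^{t B_\phi(z)} = e^{t\alpha B_\phi(z_1)} \cdot e^{t(1-\alpha) B_\phi(z_2)}.
\end{equation*}
Because $t\alpha \geq 0$ and $t(1-\alpha) \geq 0$, the hypothesis on $z_1$ and $z_2$ gives $\norm{e^{t B_\phi(z)}} \leq M_{z_1} M_{z_2}$, uniformly in $t \geq 0$ and $\phi \in [-\pi, \pi)$. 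Applying the Hille--Yoshida characterization in the reverse direction yields $z \in \mathcal K_Q^\infty$, proving the convexity.

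This is essentially the same trick used in the star-likeness proof, where the ``second factor'' was the scalar exponential $e^{t(1-\alpha)(e^{i\phi}-1)}$ of modulus at most $1$; here it is replaced by the second uniformly bounded semigroup coming from $z_2$. I do not foresee any real obstacle, as both the commutativity and the required Hille--Yoshida equivalence are already in place.
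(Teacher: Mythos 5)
Your proof is correct, and it is a genuinely different (and in fact shorter and more self-contained) route than the one in the paper.

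The paper establishes Proposition~\ref{IteratedKreissSetConvexProp} indirectly: it invokes the inclusion chain $\mathcal R_Q \subset \mathcal K_Q^\infty \subset \mathcal G_Q$ from Theorem~\ref{InclusionThm}, the fact that $\mathcal R_Q$ and $\mathcal G_Q$ are sectors with common interior $\mathcal R_Q \setminus\{0\}$ from Theorem~\ref{SemigroupInteriorThm}, and then reduces the question in the nontrivial case to whether $\mathcal K_Q^\infty \cap \partial\mathcal G_Q$ is star-like, closing the argument with the star-likeness from Proposition~\ref{TrivialitiesProp}. In contrast, you work directly with the Hille--Yosida characterization of $\mathcal K_Q^\infty$ already quoted in the proof of Proposition~\ref{TrivialitiesProp} (that $z \in \mathcal K_Q^\infty$ iff $B_\phi(z)=e^{i\phi}-1+e^{i\phi}zQ$ generates a bounded semigroup with bound independent of $\phi$), observe that $B_\phi(z)$ is affine in $z$, and use commutativity to factor $e^{tB_\phi(\alpha z_1 + (1-\alpha)z_2)} = e^{t\alpha B_\phi(z_1)}\,e^{t(1-\alpha)B_\phi(z_2)}$, which is uniformly bounded by $M_{z_1}M_{z_2}$. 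This exactly generalizes the star-likeness step in the paper, where the author's second factor is the scalar $e^{t(1-\alpha)(e^{i\phi}-1)}$ of modulus at most one (that is, the case $z_2=0$). Your version has the advantage of requiring no geometric information about $\mathcal R_Q$ or $\mathcal G_Q$ and no case distinction $\mathcal R_Q = \{0\}$ versus $\mathcal R_Q \neq \{0\}$, and therefore does not depend on Theorems~\ref{InclusionThm} or~\ref{SemigroupInteriorThm} at all; it also makes plain that the convexity of $\mathcal K_Q^\infty$ and the convexity of $\mathcal G_Q$ are instances of one and the same observation. (A tiny stylistic point: since $B_\phi(z)$ is a bounded operator, its semigroup is uniformly continuous rather than merely $C_0$, but this does not affect anything.)
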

\begin{proof}
We have $\mathcal R_Q \subset \mathcal K_Q^\infty \subset \mathcal
G_Q$ by Theorem \ref{InclusionThm}.  If $\mathcal R_Q = \{0 \}$, then
$\mathcal G_Q \subset e^{i \phi} \clos{\R}_+$ for some $\phi \in
[-\pi,\pi)$ by Theorem \ref{SemigroupInteriorThm}. Then there is
  nothing to prove because $\mathcal K_Q^\infty$ is star-like (hence,
  a convex subset of $e^{i \phi} \clos{\R}_+$) by Proposition
  \ref{TrivialitiesProp}.

If $\mathcal R_Q \neq \{0 \}$, the sets $\mathcal R_Q$ and $\mathcal
G_Q$ are sectors with the same interior $\mathcal R_Q \setminus \{ 0
\}$.  Thus, $\mathcal K_Q^\infty$ may fail to be convex only if the
set $\mathcal K_Q^\infty \cap \partial \mathcal G_Q$ is not star-like
which, however, is excluded by Proposition \ref{TrivialitiesProp}.
\end{proof}

\begin{theorem}
  \label{OpenInteriorCLosureThm}
  For any quasinilpotent operator $Q \in \BLO(X)$ we have 
  \begin{equation*}
    \mathcal R_Q = \mathcal B_Q^\circ \cup \{0 \}  =  \mathcal G_Q^\circ \cup \{0 \} =
    \mathcal K_Q^\circ \cup \{0 \} = (\mathcal
    K_Q^\infty)^\circ \cup \{0 \}
  \end{equation*}
If $\mathcal R_Q \neq \{ 0 \}$, then $\overline{\mathcal B_Q} =
\overline{\mathcal G_Q} = \overline{\mathcal K_Q^\infty} =
\overline{\mathcal R_Q}$ and $\partial \mathcal B_Q = \partial
\mathcal G_Q = \partial \mathcal K_Q^\infty = \partial \mathcal R_Q$.
\end{theorem}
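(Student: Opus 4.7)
The plan is to assemble the four identities for $\mathcal R_Q$ from previously proved results and then handle $\mathcal K_Q^\infty$ by a sandwich argument. By Theorem \ref{InteriorBQoThm} we already have $\mathcal R_Q = \mathcal B_Q^\circ \cup \{0\}$, by Theorem \ref{SemigroupInteriorThm} we have $\mathcal R_Q = \mathcal G_Q^\circ \cup \{0\}$, and by Proposition \ref{KreissSetInteriorProp} we have $\mathcal R_Q = \mathcal K_Q^\circ \cup \{0\}$. The only new ingredient needed is $\mathcal R_Q = (\mathcal K_Q^\infty)^\circ \cup \{0\}$. For this I would invoke claim \eqref{InclusionThmClaim8b} of Theorem \ref{InclusionThm}, which gives the inclusions $\mathcal B_Q \subset \mathcal K_Q^\infty \subset \mathcal G_Q$. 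Taking interiors (which is monotone) yields $\mathcal B_Q^\circ \subset (\mathcal K_Q^\infty)^\circ \subset \mathcal G_Q^\circ$, and since the two outer sets already coincide (both equal $\mathcal R_Q\setminus \{0\}$) the middle set equals them too. Adjoining $\{0\}$ gives the missing identity.

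For the closures under the hypothesis $\mathcal R_Q \neq \{0\}$, I would begin with the chain
\begin{equation*}
\mathcal R_Q \subset \mathcal B_Q \subset \mathcal K_Q^\infty \subset \mathcal G_Q,
\end{equation*}
which follows from claims \eqref{InclusionThmClaim2} and \eqref{InclusionThmClaim8b} of Theorem \ref{InclusionThm} together with $\mathcal R_Q \subset \mathcal B_Q$. Taking closures preserves inclusions, so it is enough to prove the reverse inclusion $\overline{\mathcal G_Q} \subset \overline{\mathcal R_Q}$. By Proposition \ref{TrivialitiesProp} the set $\mathcal G_Q$ is convex, and since $\mathcal R_Q \neq \{0\}$ we have $\mathcal G_Q^\circ = \mathcal R_Q \setminus \{0\} \neq \emptyset$ by Theorem \ref{SemigroupInteriorThm}. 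For a convex set with non-empty interior the identity $\overline{A} = \overline{A^\circ}$ holds, so
\begin{equation*}
\overline{\mathcal G_Q} = \overline{\mathcal G_Q^\circ} = \overline{\mathcal R_Q \setminus \{0\}} = \overline{\mathcal R_Q},
\end{equation*}
and all four closures in the chain collapse to $\overline{\mathcal R_Q}$.

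The boundary identities are then purely formal, via $\partial A = \overline{A} \setminus A^\circ$: for each of $\mathcal B_Q$, $\mathcal G_Q$, $\mathcal K_Q^\infty$, and $\mathcal R_Q$ the closure equals $\overline{\mathcal R_Q}$ by the preceding step, while the interior equals $\mathcal R_Q \setminus \{0\} = \mathcal B_Q^\circ$ by the first part of the theorem; subtraction yields a common boundary. I do not anticipate a real obstacle in this proof — it is essentially careful bookkeeping of which of the earlier identities to invoke in which order. The only subtlety worth flagging is that one must pass the convexity argument through $\mathcal G_Q$ (rather than $\mathcal B_Q$, whose convexity is known, or $\mathcal K_Q^\infty$, whose convexity was only established in Proposition \ref{IteratedKreissSetConvexProp}) to control the outermost closure in a single step; after that, the sandwich does all the work.
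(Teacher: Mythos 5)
Your proposal is correct and follows essentially the same route as the paper: the interior statement is assembled from Theorems \ref{InteriorBQoThm} and \ref{SemigroupInteriorThm}, Proposition \ref{KreissSetInteriorProp}, and the sandwich $\mathcal B_Q^\circ \subset (\mathcal K_Q^\infty)^\circ \subset \mathcal G_Q^\circ$ via claim \eqref{InclusionThmClaim8b} of Theorem \ref{InclusionThm}, and the closure statement rests on the standard fact that a convex set with nonempty interior has $\overline{K} = \overline{K^\circ}$ (the paper spells that fact out with an explicit cone argument rather than citing it). The only cosmetic difference is that you apply the convexity observation once, to $\mathcal G_Q$, and let the inclusion chain $\mathcal R_Q \subset \mathcal B_Q \subset \mathcal K_Q^\infty \subset \mathcal G_Q$ pinch the rest, whereas the paper notes all four sets are convex with common interior and invokes the same abstract lemma uniformly.
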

\begin{proof}
  The first equality is just a composition of claim
  \eqref{InclusionThmClaim8b} of Theorem \ref{InclusionThm} (giving
  first $\mathcal B_Q^\circ \subset (\mathcal K_Q^\infty)^\circ
  \subset \mathcal G_Q^\circ$ and hence the equality of interiors),
  Theorems \ref{InteriorBQoThm}, \ref{SemigroupInteriorThm}, and
  Proposition \ref{KreissSetInteriorProp}.

  By what we have already proved, all of the sets $\mathcal B_Q$,
  $\mathcal G_Q$, $\mathcal K_Q^\infty$, and $\mathcal R_Q$ are convex
  with the same interior $\mathcal R_Q \setminus \{ 0 \}$.  To
  complete the proof, we show that for any convex set $K \subset \C$
  with a \emph{nonempty} interior $K^\circ$, the closures of $K^\circ$
  and $K$ coincide.
  
  Suppose that the inclusion $\overline{K^\circ} \subset \overline{K}$
  of closures is strict, and choose $z \in \overline{K} \setminus
  \overline{K^\circ}$.  Denote $\delta := \mathop{dist}\left (z,
    K^\circ \right ) = \mathop{dist}\left (z, \overline{K^\circ}
  \right ) > 0$. Because $z \in \overline{K}$, there exists a $z' \in
  K$ such that $\abs{z - z'} < \delta / 3$, and thus
  $\mathop{dist}\left (z', K^\circ \right ) > \delta/2$.  Because $K$
  is convex, it follows that
\begin{equation*}
  \mathop{conv}\left ( z',  D \right )
  := \{ \alpha z' + (1 - \alpha) z : \alpha \in (0,1) \text{ and } 
  z \in D  \} \subset K.
\end{equation*}
where $D$ is any open disc of positive radius contained in $K^\circ$
(here we use the assumption that $K^\circ$ is nonempty).  Since the
set $\mathop{conv}\left ( z', D \right )$ is an open cone, it contains
only interior points of $K$, and some of them are arbitrarily close to
its vertex $z'$.  This is a contradiction against $\delta > 0$.
\end{proof}
There is an observation concerning the endpoints of $\mathcal B_Q \cap
\partial \mathcal B_Q$:
\begin{proposition} \label{BoundingRayProp}
  We have $\mathcal T_Q^{1/2} \cap \mathcal G_Q \cap \partial \mathcal
  G_Q \subset \mathcal B_Q \cap \partial \mathcal B_Q \subset
  \overline{\mathcal T_Q^{1/2} \cap \mathcal G_Q \cap \partial
    \mathcal G_Q}$. One of the inclusions is strict if the set
  $\mathcal B_Q \cap \partial \mathcal B_Q$ is non-trivial (i.e.,
  neither $\{ 0 \}$, a full ray of infinite length, or a pair of such
  full rays).
\end{proposition}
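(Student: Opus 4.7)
The plan is first to reduce to the interesting case $\mathcal R_Q \neq \{0\}$, where Theorem \ref{OpenInteriorCLosureThm} gives $\partial \mathcal B_Q = \partial \mathcal G_Q$, a set consisting of (at most) two closed rays from the origin. When $\mathcal R_Q = \{0\}$ both sets have empty interior, the boundaries swallow the sets themselves, and the first inclusion reduces to claim \eqref{InclusionThmClaim3} of Theorem \ref{InclusionThm} while the second follows from the star-like property of $\mathcal B_Q$ together with claim \eqref{InclusionThmClaim5}. Having disposed of that, for the remainder one works on a single boundary ray $\rho := \R_+ w \subset \partial \mathcal G_Q$; the convex, star-like set $\mathcal B_Q \cap \rho$ is then of the form $[0,b)w$ or $[0,b]w$ with $b\in[0,\infty]$, and non-triviality of $\mathcal B_Q \cap \partial \mathcal B_Q$ means some such ray has $b$ finite and positive.

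The two inclusions themselves are routine given Theorem \ref{InclusionThm}. For the first, if $z \in \mathcal T_Q^{1/2} \cap \mathcal G_Q \cap \partial \mathcal G_Q$, then $z \in \mathcal B_Q$ by claim \eqref{InclusionThmClaim3}, and $z \in \partial \mathcal B_Q$ because $\partial \mathcal B_Q = \partial \mathcal G_Q$. For the second, given $z \in \mathcal B_Q \cap \partial \mathcal B_Q$ and $\alpha \in [0,1)$, claim \eqref{InclusionThmClaim5} gives $\alpha z \in \mathcal T_Q^{1/2}$, star-likeness of $\mathcal B_Q$ gives $\alpha z \in \mathcal B_Q \subset \mathcal G_Q$, and $\alpha z$ stays on the same boundary ray, so $\alpha z \in \mathcal T_Q^{1/2} \cap \mathcal G_Q \cap \partial \mathcal G_Q$; letting $\alpha \uparrow 1$ places $z$ in the closure.

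For strictness, assume non-triviality and pick a boundary ray $\rho = \R_+ w$ with $0 < b < \infty$. I would split:

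\emph{Case A} ($bw \notin \mathcal B_Q$). Using the argument of the second inclusion on interior points $rw \in \mathcal B_Q$ with $r \uparrow b$ gives $[0,b)w \subset \mathcal T_Q^{1/2} \cap \mathcal G_Q \cap \partial \mathcal G_Q$, whose closure contains $bw$. But $bw \notin \mathcal B_Q$, so $bw \notin \mathcal B_Q \cap \partial \mathcal B_Q$, and the second inclusion is strict.

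\emph{Case B} ($bw \in \mathcal B_Q$). Since $[0,b)w \subset \mathcal T_Q^{1/2}$ already, the goal is to prove $bw \notin \mathcal T_Q^{1/2}$, which would force the first inclusion to be strict. This is the main obstacle. The approach I would pursue is contradiction: suppose $bw \in \mathcal T_Q^{1/2} \cap \mathcal B_Q$. Combine this with the segment-interpolation Proposition \ref{RittWeakTauberianInterpProp} and claim \eqref{InclusionThmClaim3} of Theorem \ref{InclusionThm} to try to push $\mathcal T_Q^{1/2} \cap \mathcal G_Q$ slightly beyond $bw$ on the ray: any such point is automatically in $\mathcal G_Q$ (since $\mathcal G_Q$ consists of full rays), and by claim \eqref{InclusionThmClaim3} it would land in $\mathcal B_Q$, contradicting the maximality of $b$. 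The alternative, cleaner route is to upgrade Proposition \ref{UniformlyBPonRayProp}: show that the combination of power-boundedness at $bw$ and the half-Tauberian bound $\|bwQ(I+bwQ)^j\| = O(j^{-1/2})$ forces a uniform Kreiss-type resolvent estimate on all of $\R_+ bw$, hence by the Ritt characterisation (Proposition \ref{NevanlinnaLubichProp}) would place $bw$ in $\mathcal R_Q = \mathcal B_Q^\circ \cup\{0\}$ --- contradicting $bw \in \partial \mathcal B_Q$. Making one of these two extension mechanisms rigorous is the delicate part; once either is in hand, strictness of at least one of the two inclusions follows from the case split, completing the proof.
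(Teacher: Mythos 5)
Your treatment of the two inclusions matches the paper exactly: both reduce to claims \eqref{InclusionThmClaim3} and \eqref{InclusionThmClaim5} of Theorem~\ref{InclusionThm} plus the identification $\partial\mathcal B_Q = \partial\mathcal G_Q$ from Theorem~\ref{OpenInteriorCLosureThm} (with the degenerate case $\mathcal R_Q=\{0\}$ handled by $\mathcal B_Q\cap\partial\mathcal B_Q=\mathcal B_Q$, $\mathcal G_Q\cap\partial\mathcal G_Q=\mathcal G_Q$). Your Case~A for strictness (a boundary side that is a half-open finite segment $[0,b)w$) is also sound: the closure on the right picks up $bw$, which is absent from the left side, so the second inclusion is strict. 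That part is fine and is implicit in the paper.

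The genuine gap is in your Case~B, where the boundary side is a \emph{closed} segment $[0,b]w\subset\mathcal B_Q$ and you must show $bw\notin\mathcal T_Q^{1/2}$. Neither of the two mechanisms you sketch delivers this. Route~1 cannot work: $\mathcal R_Q$ is the \emph{open} sector $\mathcal B_Q^\circ\cup\{0\}$ lying on one side of the boundary ray, so for $z_1=bw$ and $z_2\in\mathcal R_Q$ the interpolant $\alpha z_1+(1-\alpha)z_2$ with $\alpha\in[0,1)$ either moves \emph{into} the open sector (away from the boundary ray) or, for $z_2=0$, moves \emph{inward} on the ray; Proposition~\ref{RittWeakTauberianInterpProp} never produces a point strictly beyond $bw$ on $\R_+w$, so no contradiction with the maximality of $b$ arises. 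Route~2 asks for a Kreiss estimate uniform over $\R_+bw$, but power-boundedness at the single point $bw$ together with the half-Tauberian rate there gives no obvious control on $N_{rbw}$ for $r>1$, which is exactly what Proposition~\ref{UniformlyBPonRayProp} needs as a hypothesis; you have not indicated how to close that loop. The paper closes Case~B not by either of your routes but by importing an external result, \cite[Theorem~1.2]{ND:TRRCPBO}: if $T(bw)$ is power-bounded \emph{and} satisfies the half-Tauberian condition, then $T(bw/\beta)$ is already power-bounded for some $\beta\in(0,1)$, i.e.\ $bw/\beta\in\mathcal B_Q$, contradicting that $bw$ is the endpoint of $\mathcal B_Q$ on the ray $\R_+w$. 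Without that (or an equivalent ``endpoint must be open'' theorem for $\mathcal T_Q^{1/2}\cap\mathcal B_Q$), the strictness claim in Case~B remains unproved.
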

\begin{proof}
   If $\mathcal R_Q \neq \{ 0 \}$ the first inclusion follows because
   $\mathcal T_Q^{1/2} \cap \mathcal G_Q \subset \mathcal B_Q$ by
   claim \eqref{InclusionThmClaim3} of Theorem \ref{InclusionThm} and
   $\partial \mathcal G_Q = \partial \mathcal B_Q$ by Theorem 
   \ref{OpenInteriorCLosureThm}.  If $\mathcal R_Q = \{ 0 \}$, use
   $\mathcal G_Q \cap \partial \mathcal G_Q = \mathcal G_Q$ and
   $\mathcal B_Q \cap \partial \mathcal B_Q = \mathcal B_Q$ instead.

  For the second inclusion, we argue as follows: If $z \in \mathcal
  B_Q \cap \partial \mathcal B_Q$, we have for all $\alpha \in [0,1)$
    the inclusion $\alpha x \in \mathcal T_Q^{1/2} \cap \mathcal G_Q
    \cap \partial \mathcal G_Q$ because $\alpha x \in \mathcal B_Q
    \subset \mathcal G_Q$ and $\alpha x \in \mathcal T_Q^{1/2}$ by
    claims \eqref{InclusionThmClaim1} and \eqref{InclusionThmClaim5}
    of Theorem \ref{InclusionThm}, and $\partial \mathcal B_Q =
    \partial \mathcal G_Q$ by Theorem \ref{OpenInteriorCLosureThm}
    if $\mathcal R_Q \neq \{ 0 \}$.  If $\mathcal R_Q = \{ 0 \}$,
    proceed as above. Letting $\alpha \to 1-$ proves that $x \in
    \overline{\mathcal T_Q^{1/2} \cap \mathcal G_Q \cap \partial
      \mathcal G_Q}$ in both cases.

For contradiction, assume that at least one side of $\mathcal B_Q \cap
\partial \mathcal B_Q $ is a closed ray of finite positive length, say
$[0, z]$ for $z \in \C$.  If $z \in \mathcal T_Q^{1/2}$, it follow
from \cite[Theorem 1.2]{ND:TRRCPBO} that $z/\beta \in \mathcal B_Q
\cap \partial \mathcal B_Q$ for some $\beta \in (0,1)$ which is
impossible.
\end{proof}
Note that by \cite[Theorem 1.2]{ND:TRRCPBO}, claim
\eqref{InclusionThmClaim5} of Theorem \ref{InclusionThm} cannot be
improved to the inclusion $\mathcal B_Q = \mathcal B_Q[0,1] \subset
\mathcal T_Q^{1/2}$ whenever $\mathcal B_Q$ does not consist of full
rays.

\ConsiderThis{
\begin{enumerate}
\item write a note about an operator for which $\mathcal G_Q \supset
  \R$. If $Q \neq 0$, then either $\mathcal G_Q = \R, \{ \Im z \geq 0
  \}, \{ \Im z \leq 0 \}$. In all of these cases, it is a closed set.
\end{enumerate}
}

\section{\label{TauberianGrowthSec} Growth on rays throught Tauberian sets}

We now give estimates on the growth of resolvents and semigoups on the
rays that intersect the Tauberian sets $\mathcal T_Q$ or $\mathcal
T_Q^{1/2}$.  
\begin{proposition}
  \label{TauberianGrowthbounds}
  Let $Q \in \BLO(X)$ be a quasinilpotent operator and $z \in \C$.
  \begin{enumerate}
  \item \label{TauberianGrowthboundsClaim1} If $z \in \mathcal
    T_Q^{1/2}$, then there are constants $C_r, C_g < \infty$ (both
    depending on $z$ and given by
    \eqref{TauberianGrowthboundsClaim1Constant}) such that
    \begin{align} 
      \mathlabel{TauberianGrowthboundsClaim1EqA}
      \norm{(1 - s z Q)^{-1}} 
      & \leq C_r (s + 1)^{1/2} + 1 \quad
      \text{ for all } \quad s \geq 0;  \quad \text{ and } \\
      \mathlabel{TauberianGrowthboundsClaim1EqB} 
      \norm{e^{t z Q}} &
      \leq C_g (t^{1/2} + 1) \quad \text{ for all } \quad t \geq 1.
    \end{align}
  \item   \label{TauberianGrowthboundsClaim2}
    If $z \in \mathcal T_Q$, then there are constants $D_r, D_g < \infty$
    (both depending on $z$  and given by
    \eqref{TauberianGrowthboundsClaim2Constant}) such that
    \begin{align} \mathlabel{TauberianGrowthboundsClaim2EqA}
      \norm{(1 - s z Q)^{-1}} & \leq D_r \ln{(s + 1)} + 1 \quad \text{
        for all } \quad s > 0; \quad
      \text{ and } \\
      \mathlabel{TauberianGrowthboundsClaim2EqB} \norm{e^{t zQ}} &
      \leq D_g (\ln{(t + 1/e)} + 4)  \quad \text{ for all } \quad t > 1.
    \end{align}
  \end{enumerate}
\end{proposition}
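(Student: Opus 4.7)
My plan is first to derive uniform bounds for the powers $\norm{T(z)^n}$ from the Tauberian hypothesis, and then to transfer those bounds to the semigroup $e^{tzQ}$ and the resolvent $(1-szQ)^{-1}$ using elementary probabilistic identities. For the first step, the telescope
\[
T(z)^n - I = \sum_{j=0}^{n-1}(T(z)-I)T(z)^j = \sum_{j=0}^{n-1} zQ(1+zQ)^j
\]
together with the hypothesis $z \in \mathcal T_Q^{1/2}$, resp.\ $z \in \mathcal T_Q$, gives bounds $\norm{T(z)^n} \leq 1 + C_1\sqrt{n}$, resp.\ $\norm{T(z)^n} \leq 1 + C_2 \ln(n+1)$, where the constants $C_1, C_2$ are controlled by $\sup_{j \geq 0}(j+1)^{1/2}\norm{zQ(1+zQ)^j}$ and by $M_z$. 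These suprema are finite because $j \mapsto (j+1)^\alpha \norm{zQ(1+zQ)^j}$ is bounded on any finite set, so the $\limsup$ conditions in \eqref{TauberianSet} upgrade to $\sup$ conditions.

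For the semigroup bound the key observation is that $tzQ = tT(z) - tI$ with $tI$ commuting with $tT(z)$, hence
\[
e^{tzQ} = e^{-t}\sum_{n=0}^\infty \frac{t^n}{n!}T(z)^n.
\]
Since $e^{-t}t^n/n!$ is the probability mass function of a Poisson$(t)$ variable $N_t$, the Step 1 bounds give $\norm{e^{tzQ}} \leq 1 + C_1\,\mathbb{E}[\sqrt{N_t}]$ in case (i) and $\norm{e^{tzQ}} \leq 1 + C_2\,\mathbb{E}[\ln(N_t+1)]$ in case (ii). Jensen's inequality applied to the concave maps $x \mapsto \sqrt{x}$ and $x \mapsto \ln(x+1)$, together with $\mathbb{E}[N_t]=t$, then yields $\norm{e^{tzQ}} \leq 1 + C_1\sqrt{t}$ and $\norm{e^{tzQ}} \leq 1 + C_2 \ln(t+1)$, which imply \eqref{TauberianGrowthboundsClaim1EqB} and \eqref{TauberianGrowthboundsClaim2EqB} after absorbing constants.

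For the resolvent I use the same trick in its discrete ``Abel'' form. Writing $1-szQ = (1+s)I - sT(z)$ for $s \geq 0$ and noting that $\sigma(T(z))=\{1\}$ forces $\tfrac{s}{s+1}T(z)$ to have spectral radius $\tfrac{s}{s+1}<1$, the Neumann series yields
\[
(1-szQ)^{-1} = \frac{1}{1+s}\sum_{n=0}^\infty \Bigl(\frac{s}{1+s}\Bigr)^n T(z)^n.
\]
The weights are the pmf of a geometric variable $M_s$ on $\{0,1,2,\ldots\}$ with mean $\mathbb{E}[M_s] = s$, so Step 1 combined with the same Jensen step delivers \eqref{TauberianGrowthboundsClaim1EqA} and \eqref{TauberianGrowthboundsClaim2EqA}. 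The main labour here is really bookkeeping rather than analysis: the $t \geq 1$ restriction in \eqref{TauberianGrowthboundsClaim2EqB} and the slightly unusual form $\ln(t+1/e)+4$ reflect that one must also absorb the trivial continuity bound $\norm{e^{tzQ}}\leq e^{t\norm{zQ}}$ for small $t$ and the $n=0$ term in the telescope into the final constants $C_r, C_g, D_r, D_g$.
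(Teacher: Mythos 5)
Your argument is correct and takes a genuinely different technical route from the paper's. The paper proves the resolvent bound by writing $(1-\tfrac{\xi z}{1-\xi}Q)^{-1} = I - \sum_{j\ge1}\xi^j(I-T(z))T(z)^{j-1}$, majorizes $\sum_j \xi^j j^{-1/2}$ by a Gaussian integral to get $\sqrt{\pi/|\ln\xi|}$, and then proves the semigroup bound by splitting the series $e^{-t}\sum t^j\,\norm{T(z)^j}/j!$ at $J\approx et$ and estimating the tail with Stirling's approximation. You instead observe that the Poisson and geometric weights are probability mass functions with mean $t$ and $s$ respectively, so that after telescoping to $\norm{T(z)^n}\le 1+C\sqrt{n}$ (resp.\ $1+C\ln(n+1)$), a single application of Jensen's inequality for the concave maps $x\mapsto\sqrt{x}$ and $x\mapsto\ln(x+1)$ produces both estimates simultaneously. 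Your probabilistic route is shorter, avoids Stirling entirely, and makes the parallel between the resolvent and semigroup cases transparent. The one thing your proof does \emph{not} deliver is the specific numerical constants $C_r=\pi^{1/2}M$, $C_g=(2M+1)e^{1/2}$, $D_r$, $D_g$ that the proposition asserts in \eqref{TauberianGrowthboundsClaim1Constant}--\eqref{TauberianGrowthboundsClaim2Constant}: your telescoping yields, e.g., $1+2M\sqrt{s+1}$ rather than $1+\pi^{1/2}M\sqrt{s+1}$, so you prove the qualitative bounds with different (comparable) constants. Since you flag at the end that bookkeeping is needed to match the stated forms exactly, this is a point of precision rather than a gap, but if the literal constants matter downstream you would need to tighten the telescoping step (e.g.\ the sharper $\sum_{j=0}^{n-1}(j+1)^{-1/2}\le\int_0^n x^{-1/2}dx=2\sqrt n$ still gives $2M$, not $\pi^{1/2}M$) or settle for slightly larger $C_r,C_g$.
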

\begin{proof}
  The proof is based on estimating the growth of powers of $T(z) = 1 +
  zQ$, $z \in \C$, using the identities
\begin{align*}
  \left (1 - \frac{\xi z}{1 - \xi }Q \right )^{-1} & =
  (1 - \xi ) \left (I - \xi T(z) \right )^{-1} \\
  & = I - \sum_{j \geq 1}{\xi^{j} (I - T(z)) T(z)^{j-1}} \quad
  \text{ for } \quad \xi \in \D, \\
  T(z)^k &= I - \sum_{j = 0}^{k - 1}{ \left (I - T(z) \right ) T(z)^j},
  \quad \text{ and }  
\end{align*}
\begin{align*}
  e^{t zQ} & = e^{-t} e^{t T(z)} = e^{-t} \sum_{j \geq
    0}{\frac{t^j T(z)^j}{j!}} \quad \text{ for } \quad t\in \C.
\end{align*}
Claim \eqref{TauberianGrowthboundsClaim1}: We prove the claims for $z
\in \mathcal T_Q^{1/2}$ with the constants
\begin{equation} \mathlabel{TauberianGrowthboundsClaim1Constant}
\begin{aligned}
  C_r & := \pi^{1/2} M \quad \text{ and } \quad C_g := (2 M + 1)e^{1/2} \quad \text{ where
  } \\
M & := \sup_{j \geq 0}{(j + 1)^{1/2} \norm{(I - T(z)) T(z)^{j}}} < \infty.
\end{aligned}  
\end{equation}
For all $\xi \in [0,1)$ we have
\begin{equation} \mathlabel{TauberianGrowthboundsClaim1Eq1}
  \norm{\left (1 - \frac{\xi z}{1 - \xi }Q  \right )^{-1}}
\leq 1 +  \frac{C_r}{\pi^{1/2}}\sum_{j \geq 1}{\frac{\xi^{j}}{j^{1/2}}}
\end{equation}
where we majorize the sum by an integral, noting that $\xi \in (0,1)$
and $-\infty < \ln{\xi} < 0$:
\begin{align*}
  \sum_{j \geq 1}{\frac{\xi^{j}}{j^{1/2}}} & \leq 
  \int_{0}^\infty{\frac{\xi^{j} \, dj}{j^{1/2}}} =
  \int_{0}^\infty{\frac{\xi^{x^2} \, 2x dx}{(x^2)^{1/2}}}
 =  \frac{1}{\abs{\ln{\xi}}^{1/2}} 
\int_{-\infty}^{\infty}{e^{- y^2} \,  dy} = \sqrt{\frac{\pi}{\abs{\ln{\xi}}}}.
\end{align*}
Now for $s \in \R_+$ we have $s = \frac{\xi}{1 - \xi}$ if and only if
$\xi = \frac{s}{s + 1}$. Thus $\abs{\ln{\xi}} = -\ln{\xi} =
\ln{\frac{s + 1}{s}} = \ln{(s + 1)}- \ln{s} \geq \frac{1}{s + 1}$ by
the mean value theorem, and thus \eqref{TauberianGrowthboundsClaim1EqA}
follows. 

To estimate $\norm{e^{tzQ}}$, we note  that $\norm{T(z)^j} \leq 1
+ 2 M(j - 1)^{1/2} \leq (2M + 1) j^{1/2}$ for $j \geq 1$, and we get
\begin{equation} \mathlabel{TauberianGrowthboundsClaim1Eq2}
\norm{e^{tzQ}} \leq (2M + 1) e^{-t}
\left( 1 + \sum_{j \geq 1}{\frac{t^{j} \sqrt{j}}{j!}} \right )
\text{ for } t \geq 0.  
\end{equation}
Assume that $t > 1/e$, and let $J$ be the integer satisfying $1
\leq J \leq et < J + 1$. Then
\begin{align*}
  & \sum_{j \geq 1}{\frac{t^{j} \sqrt{j}}{j!}}  \leq \sum_{j =
    1}^J{\frac{t^{j} \sqrt{j}}{j!}} + \sum_{j \geq J + 1}{\frac{t^{j}
      \sqrt{j}}{j!}}
  \leq J^{1/2} e^{t}  + \sum_{j \geq J + 1}{\frac{t^{j} }{(j-1)!}} \\
  & \leq J^{1/2} e^{t}
  + \frac{t^{J + 1}}{J!} \left (1 + \frac{t}{J + 1} + \frac{t^2}{(J + 1)(J + 2) } \cdots  \right ) \\
  & \leq J^{1/2} e^{t} + \frac{t^{J + 1}}{J!}  \sum_{k \geq 0}{\left (
      \frac{t}{J + 1} \right )^k} =  J^{1/2} e^{t} + \frac{t^{J +
      1}}{J!} \left ( 1 - \frac{t}{J + 1} \right )^{-1}.
\end{align*}
Since $J \geq 1$ we get
\begin{equation*}
  \frac{t^{J + 1}}{J!} \leq \frac{\left (\frac{J + 1}{e} \right )^{J + 1}}{
    \sqrt{2 \pi J} \left
      (\frac{J}{e} \right )^J} \leq 
  \frac{1}{\sqrt{2 \pi J}} \left(J + 1 \right) \cdot 
  e^{-1}\left(1 + \frac{1}{J} \right)^J 
  \leq \frac{1}{\sqrt{2 \pi}} \frac{J + 1}{J^{1/2}}
\end{equation*}
where we used Stirling's approximation $J! > \sqrt{2 \pi J} \left
  (\frac{J}{e} \right )^J$. Noting that $et - 1 < J \leq e t$, $\tfrac{t}{J +
  1} < e^{-1}$ and hence  $\frac{1}{\sqrt{2 \pi}} \left ( 1 - \frac{t}{J + 1}
\right )^{-1} < 1$,  we get
\begin{equation*}
  \sum_{j \geq 1}{\frac{t^{j} \sqrt{j}}{j!}}  \leq
  (e t )^{1/2} e^t + (e t + 1)(e t - 1)^{-1/2}. 
\end{equation*}
Now \eqref{TauberianGrowthboundsClaim1EqB} follows from 
\eqref{TauberianGrowthboundsClaim1Eq2} and $\max_{t \geq
  1}{e^{-t}\left ( 1 + 2(e t + 1)(e t - 1)^{-1/2}\right )} <
\sqrt{2}$.

Claim \eqref{TauberianGrowthboundsClaim2}: Take $z \in \mathcal T_Q$
and define the constants
\begin{equation} \mathlabel{TauberianGrowthboundsClaim2Constant}
  D_r := \sup_{j \geq 0}{(j + 1) \norm{(I - T(z)) T(z)^{j}}} < \infty \quad \text{and} \quad 
  D_g := D_r + 1.
\end{equation}
The resolvent estimate of claim \eqref{TauberianGrowthboundsClaim2} is
proved as above but instead of equation
\eqref{TauberianGrowthboundsClaim1Eq1} we now compute for $\xi \in
      [0,1)$ the sum
\begin{equation*}
 \sum_{j \geq 1}{\frac{\xi^{j}}{j}} = \int_{0}^\xi{\sum_{j \geq 1}{x^{j-1}} \, d x}
= \int_{0}^\xi{\frac{d x }{1 - x} } = - \ln{(1 - \xi)} = \ln{(s + 1)}.  
\end{equation*}

 It remains to estimate $\norm{e^{tzQ}}$ when $z \in \mathcal T_Q$.
 Again, we have $\norm{T(z)^j} \leq 1 + D_r(1 + \ln{j} ) \leq D_g
 \ln{ej}$ for $j \geq 1$, which gives the estimate just like
 \eqref{TauberianGrowthboundsClaim1Eq2}
\begin{equation} \mathlabel{TauberianGrowthboundsClaim2Eq1}
\norm{e^{tzQ}} \leq D_g e^{-t}
\left( 1 + \sum_{j \geq 1}{\frac{t^{j} \ln{(ej)}}{j!}} \right ) \quad \text{ for all } \quad t \geq 0.  
\end{equation}
Assume again that $t > 1/e$, and let $J$ be the integer satisfying $1
\leq J \leq e t < J + 1$. Then
\begin{align*}
   \sum_{j \geq 1}{\frac{t^{j} \ln{(ej)} }{j!}}  & \leq \sum_{j = 1}^J
  {\frac{t^{j} \ln{(ej)}} {j!}}
  + \sum_{j \geq J + 1}{\frac{t^{j} \ln{(ej)}}{j!}} \\
  & \leq e^{t} \ln{(e J)} + \frac{\ln{e(J + 1)}}{J + 1} \sum_{j \geq J + 1}{\frac{t^{j} }{(j-1)!}}.
\end{align*}
The sum in the latter term can be estimated by Stirling's
approximation together with the estimates $et < J + 1 \leq e t + 1$,
$\tfrac{t}{J + 1} < e^{-1}$, and $\frac{1}{\sqrt{2\pi}} \left ( 1 -
\frac{t}{J + 1} \right )^{-1} < 1$: 
\begin{align*}
 & \sum_{j \geq J + 1}{\frac{t^{j} }{(j-1)!}} \leq \frac{t^{J +
      1}}{J!} \sum_{j \geq 0}{\left ( \frac{t }{J+1} \right
    )^{j}} < J + 1.
\end{align*}
We get $\sum_{j \geq 1}{\frac{t^{j} \ln{(ej)} }{j!}} \leq e^{t} \ln{(e
  J)} + \ln{e(J + 1)} < e^{t} \ln{e(et + 1)} $, and
\eqref{TauberianGrowthboundsClaim2EqB} follows since $\max_{t \geq
  1}{e^{-t}\ln{e(et + 1)}} < 1$.
\end{proof}

\section{\label{LindelofSec} Consequences of the Phragmen--Lindel\"of theorem}

 Much of the next results are consequences of Phragmen--Lindel\"of
 theorem (see Proposition \ref{PhragmenLindelofThm}) applied to the
 $\BLO(X)$-valued functions $r(s) := (1 - sQ)^{-1}$ (the Fredholm
 resolvent) and $g(t) := e^{t Q}$.  Because $\sigma(Q) = \{ 0 \}$, the
 Gelfand formula for the spectral radius implies that for all $r > 0$,
 there exists $C_r < \infty$ such that $\norm{e^{t Q}} \leq C_r e^{r
   \abs{t}}$ holds for all $t \in \C$; thus the entire function
 $g$ is always of exponential type. Unfortunately, the function
 $r$ does not have the same property without an additional
 compactness assumption on the quasinilpotent $Q$.

\subsection{Conditions for $\mathcal T_Q \subset \mathcal B_Q$}

The purpose of this section is to find sufficient conditions that
imply $\mathcal T_Q \subset \mathcal B_Q$ for a quasinilpotent $Q$;
i.e., that the Tauberian condition \eqref{TauberianConditionT} implies
the power-boundedness of $T(z)$. We already know that
\begin{equation*}
\begin{aligned}
  \mathcal T_Q \subset \mathcal R_Q & \quad \Leftrightarrow \quad
  \mathcal T_Q \subset \mathcal B_Q \quad \Leftrightarrow \quad
  \\ \mathcal A_Q \cap \mathcal T_Q \subset \mathcal B_Q, \quad
  \mathcal T_Q \cap \left (- \mathcal G_Q \right ) & = \emptyset,
  \quad\text{ and } \quad \mathcal T_Q \cap \left ( \mathcal A_Q^c
  \setminus (- \mathcal G_Q ) \right ) = \emptyset.
\end{aligned}  
\end{equation*}
The first equivalence holds by Proposition \ref{BoundaryExcludedProp}
because $\mathcal R_Q = \mathcal B_Q^{\circ} \cup \{ 0 \}$.  Based on
claims \eqref{InclusionThmClaim2} and \eqref{InclusionThmClaim2b} of
Theorem \ref{InclusionThm}, we see that inclusion $\mathcal A_Q \cap
\mathcal T_Q \subset \mathcal B_Q$ always holds. By Proposition
\ref{GelfandHilleProp}, we have $\mathcal T_Q \cap \left (- \mathcal
G_Q \right ) = \emptyset$ for all $Q \neq 0$, too.
 Hence, we conclude that $\mathcal T_Q
\subset \mathcal B_Q$ if and only if
\begin{equation}
  \mathlabel{InclusionEquivalence}
\mathcal T_Q \cap \left ( \mathcal A_Q^c
  \setminus (- \mathcal G_Q ) \right ) = \emptyset,
\end{equation}
and by a similar argument we see $\mathcal T_Q^{1/2} \subset \mathcal
A_Q$ if and only if
\begin{equation}
  \mathlabel{VerifyThisInclusionByPhLind}
  \mathcal T_Q^{1/2} \cap \left ( \mathcal A_Q^c \setminus (- \mathcal G_Q ) \right ) = \emptyset.
\end{equation}
Clearly \eqref{VerifyThisInclusionByPhLind} $\Rightarrow$
\eqref{InclusionEquivalence}, and we look for sufficient conditions to
ensure either of these.  Let us start from a simple observation when
\eqref{VerifyThisInclusionByPhLind} holds trivially:
\begin{proposition}
  Let $Q \in \BLO(X)$ be a quasinilpotent operator whose Ritt angle
  satisfies $\theta > \pi/4$.  Then $\mathcal T_Q^{1/2} \subset
  \mathcal A_Q$ and hence $\mathcal T_Q \subset \mathcal B_Q$.
\end{proposition}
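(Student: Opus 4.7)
The plan is to verify the equivalent form \eqref{VerifyThisInclusionByPhLind} of $\mathcal T_Q^{1/2} \subset \mathcal A_Q$ by proving the stronger, purely geometric inclusion $\mathcal A_Q^c \subset -\mathcal G_Q$. The argument rests on the sectorial containment $\mathcal R_Q \C_+ \subset \mathcal A_Q$ from claim \eqref{InclusionThmClaim4} of Theorem \ref{InclusionThm}, combined with the strict inequality $\theta > \pi / 4$; no actual Phragm\'en--Lindel\"of estimate is needed for this particular proposition.

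By Definition \ref{RittAngleDef} I write $\mathcal R_Q = e^{i \phi} \Sector{\theta} \cup \{ 0 \}$ for some $\phi \in [-\pi, \pi)$. A direct computation with arguments of complex numbers gives
\begin{equation*}
  \mathcal R_Q \C_+ = e^{i \phi} \Sector{\theta + \pi/2} \cup \{ 0 \}
  \quad \text{whenever } \theta < \pi/2,
\end{equation*}
while in the borderline case $\theta = \pi/2$ the product $\mathcal R_Q \C_+$ misses only the single ray $e^{i(\phi + \pi)} \R_+$, which already lies in $-\mathcal G_Q$; I will handle that case in one line. In the main case $\theta < \pi/2$, the complement of $\mathcal R_Q \C_+$ in $\C$ is the closed sector $-\overline{e^{i \phi} \Sector{\pi/2 - \theta}}$ of half-angle $\pi/2 - \theta$ about direction $-e^{i \phi}$. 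Since $0 \in \mathcal A_Q$ trivially, $\mathcal A_Q^c$ is contained in this closed sector with the origin removed.

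The strict hypothesis $\theta > \pi/4$ now gives $\pi / 2 - \theta < \theta$, so $\overline{e^{i \phi} \Sector{\pi/2 - \theta}} \setminus \{ 0 \} \subset e^{i \phi} \Sector{\theta}$; consequently
\begin{equation*}
  \mathcal A_Q^c \subset -e^{i \phi} \Sector{\theta} = -\mathcal R_Q \setminus \{ 0 \} \subset -\mathcal G_Q,
\end{equation*}
where the last inclusion uses claim \eqref{InclusionThmClaim1} of Theorem \ref{InclusionThm}. Hence $\mathcal A_Q^c \setminus (-\mathcal G_Q) = \emptyset$, and \eqref{VerifyThisInclusionByPhLind} delivers $\mathcal T_Q^{1/2} \subset \mathcal A_Q$. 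For the ``hence'' part, claim \eqref{InclusionThmClaim3} gives $\mathcal T_Q \subset \mathcal T_Q^{1/2} \subset \mathcal A_Q$, so $\mathcal T_Q = \mathcal T_Q \cap \mathcal A_Q = \mathcal R_Q$ by claim \eqref{InclusionThmClaim2b}, and finally $\mathcal R_Q \subset \mathcal B_Q$ by claim \eqref{InclusionThmClaim2}.

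The only nontrivial step will be carrying out the sector arithmetic for $\mathcal R_Q \C_+$ cleanly and checking that $\theta > \pi/4$ really does place the \emph{closed} complementary sector strictly inside the \emph{open} sector $-\mathcal R_Q \setminus \{ 0 \}$; aside from this bookkeeping and the trivial degenerate case $\theta = \pi/2$, everything is a direct consequence of facts already assembled in Theorem \ref{InclusionThm}.
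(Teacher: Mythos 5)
Your proof is correct and follows essentially the same route as the paper: reduce to the geometric condition \eqref{VerifyThisInclusionByPhLind} and then win by sector arithmetic from claim \eqref{InclusionThmClaim4} of Theorem \ref{InclusionThm}, with $\theta > \pi/4$ forcing the complementary sector to be swallowed by $-\mathcal G_Q$. The only cosmetic difference is that you carry out the computation with $\mathcal R_Q\C_+$ and name the complementary closed sector explicitly, whereas the paper phrases the same covering argument more loosely in terms of $\mathcal G_Q\C_+$ and total central angle; since $\mathcal R_Q$ and $\mathcal G_Q$ share interior and closure (Theorem \ref{OpenInteriorCLosureThm}), this is an equivalent and slightly tidier bookkeeping of the same idea.
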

\begin{proof}
  Denote the Ritt angle of $Q$ by $\theta > 0$. Then $\mathcal A_Q$
  contains an open sector $\Sigma := \C_+ \mathcal G_Q$ whose central
  angle is $2 \theta + \pi$; see claim \eqref{InclusionThmClaim4} of
  Theorem \ref{InclusionThm}.  Because the central angle of the sector
  $-\mathcal G_Q$ is $2 \theta$ and $-\mathcal G_Q$ is geometrically
  opposite to $\Sigma$, we conclude that $\mathcal A_Q \cup (-\mathcal
  G_Q)$ contains open sectors whose total central angle is at least
  $\min{(2\pi, 4 \theta + \pi)}$. If now $\theta > \pi/4$, there is no
  room at all left for the set $\mathcal A_Q^c \setminus (- \mathcal
  G_Q )$, and \eqref{VerifyThisInclusionByPhLind} follows.
\end{proof}

\begin{proposition}
   \label{PhragmenLindelofMainProp}
   If $Q \in \BLO(X)$ is a quasinilpotent operator such that the
   origin is an interior point of $\mathop{conv}(\mathcal G_Q \cup
   \mathcal T_Q^{1/2})$, then $Q = 0$.  
If $\mathcal R_Q \neq \{ 0 \}$
   and $Q \neq 0$, then $\mathcal A_Q \not \subset \mathcal
   T_Q^{1/2}$.
\end{proposition}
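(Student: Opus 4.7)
The plan is to apply the Phragm\'en--Lindel\"of theorem (Proposition \ref{PhragmenLindelofThm}) to the entire $\BLO(X)$-valued function $g(\zeta) := e^{\zeta Q}$. Because $Q$ is quasinilpotent, $g$ has minimal exponential type: $\norm{g(\zeta)} \leq C_\epsilon e^{\epsilon \abs{\zeta}}$ for every $\epsilon > 0$. On each ray $z \R_+$ with $z \in \mathcal G_Q$ the function $g$ is uniformly bounded, while Proposition \ref{TauberianGrowthbounds}\eqref{TauberianGrowthboundsClaim1} gives $\norm{g(\zeta)} \leq C(1+\abs{\zeta})^{1/2}$ on each ray $z \R_+$ with $z \in \mathcal T_Q^{1/2}$.

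To prove the first claim, I would use the hypothesis together with Carath\'eodory's theorem to extract three points $z_1, z_2, z_3 \in \mathcal G_Q \cup \mathcal T_Q^{1/2}$ whose convex hull contains the origin strictly in its interior. The three arcs between $\arg z_1, \arg z_2, \arg z_3$ on the unit circle then each have length strictly less than $\pi$, so the rays $z_j \R_+$ partition $\C$ into three open sectors, each of angular opening $< \pi$. In each such sector, after a rotation placing it inside $\overline{\C_+}$, I would apply Phragm\'en--Lindel\"of to $\zeta \mapsto g(\zeta)/(K+\zeta)^{1/2}$ with $K > 0$ chosen so that $K + \zeta$ is analytic, nonvanishing, and of magnitude comparable to $1 + \abs{\zeta}$ on the closed sector. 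The quotient is uniformly bounded on the two boundary rays and satisfies an interior bound $\leq A e^{\tau\abs{\zeta}}$ with $\tau$ arbitrarily small; since the sector opening is $< \pi$, Proposition \ref{PhragmenLindelofThm} applied with $\omega = 1$ yields boundedness of the quotient, whence $\norm{g(\zeta)} \leq C(1+\abs{\zeta})^{1/2}$ throughout the sector. Gluing the three sectors extends this bound to all of $\C$; Cauchy's estimates then force $g \equiv g(0) = I$, so $Q = g'(0) = 0$.

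For the second claim, assume $\mathcal R_Q \neq \{0\}$ and, toward a contradiction, $\mathcal A_Q \subset \mathcal T_Q^{1/2}$. By claim \eqref{InclusionThmClaim4} of Theorem \ref{InclusionThm} and Theorem \ref{InteriorBQoThm}, $\mathcal R_Q \C_+ \subset \mathcal A_Q \subset \mathcal T_Q^{1/2}$, and $\mathcal R_Q \C_+$ is an open sector of central angle strictly greater than $\pi$ (since $\mathcal R_Q$ itself has positive opening). Consequently $\mathcal G_Q \cup \mathcal T_Q^{1/2}$ contains an open sector of opening $> \pi$, which cannot lie in any closed half-plane through $0$; equivalently, $0$ is interior to its convex hull. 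The first part now forces $Q = 0$, contradicting $Q \neq 0$.

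The main obstacle is the Phragm\'en--Lindel\"of step in the first claim: one must choose the branch of $(K + \zeta)^{1/2}$ and the shift $K$ so that the denominator has the correct growth $(1+\abs{\zeta})^{1/2}$ while remaining analytic and bounded below on each rotated sector (its opening $< \pi$ places it inside $\overline{\C_+}$, so the principal branch works for any $K > 0$), and then reduce the operator-valued estimate to the scalar Phragm\'en--Lindel\"of of Proposition \ref{PhragmenLindelofThm} via the pairings $\zeta \mapsto \langle x^*, g(\zeta) x\rangle$, as already done in the proof of that proposition.
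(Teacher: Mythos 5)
Your argument is correct and follows essentially the same path as the paper's proof on both halves: the three-ray Phragm\'en--Lindel\"of reduction (using Proposition \ref{TauberianGrowthbounds} for the square-root bound on $\mathcal T_Q^{1/2}$-rays and boundedness on $\mathcal G_Q$-rays) for the first claim, and the observation that $\mathcal R_Q \C_+ \subset \mathcal A_Q$ has central angle $>\pi$ so $\mathop{conv}(\mathcal A_Q)=\C$ for the second. The only thing you add is an explicit normalization $g(\zeta)/(K+\zeta)^{1/2}$ to make Proposition \ref{PhragmenLindelofThm} applicable as stated --- a detail the paper leaves implicit --- and I would phrase the triangle-extraction step as the interior (Steinitz-type) refinement of Carath\'eodory rather than Carath\'eodory itself, since one needs $0$ strictly inside the triangle, not merely in it.
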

\noindent Thus, the origin $z = 0$ is a boundary point of all
of the sets $\mathcal B_Q$, $\mathcal G_Q$, $\mathcal K_Q$, $\mathcal
K_Q^\infty$, $\mathcal R_Q$, $\mathcal T_Q^{1/2}$, or $\mathcal T_Q$
if and only if $Q \neq 0$.
\begin{proof}
  If the origin is an interior point of $\mathop{conv}(z_1, z_2, z_3)$
  with $z_1, z_2, z_3 \in \mathcal G_Q \cup \mathcal T_Q^{1/2}$, then
  the three rays $z_1 \R_+$, $z_2 \R_+$, and $z_3 \R_+$ divide $\C$
  into three sectors whose central angles are strictly less than
  $\pi$. On these rays, the entire function $g(t) = e^{tQ}$ grows at
  most like a square root by claim
  \eqref{TauberianGrowthboundsClaim1} of Proposition
  \ref{TauberianGrowthbounds}.  Since the entire function $g$ is of
  exponential type, Proposition \ref{PhragmenLindelofThm} (with
  $\omega = 1$) implies $\norm{g(t)} \leq C \left ( \abs{t}^{1/2} + 1
  \right )$ holds for some $C < \infty$ and all $t \in \C$. By the
  Cauchy estimates, such $g$ is a constant function and $Q = 0$
  follows.

  Suppose $\mathcal R_Q = e^{i \phi}\Sector{\delta} \cup \{ 0 \}$ for
  $\delta > 0$ and $\phi \in [-\pi, \pi)$.  By claim
    \eqref{InclusionThmClaim4} of Theorem \ref{InclusionThm} we have
    $\mathop{conv}(\mathcal A_Q) = \C$. Now, if $\mathcal A_Q \subset
    \mathcal T_Q^{1/2}$ holds, then $Q = 0$ follows from the first
    claim.
\end{proof}

To get the main result of this section, we proceed to real operators
on partially ordered complex function spaces.  Let us assume that the
Banach space $X$ is a \emph{complex function space}, meaning that each
$x \in X$ is actually a function $x:\Omega \to \C$ where $\Omega$ is a
set of points.  We say that $x \in X$ is \emph{real} if $x(\omega) \in
\R$ for all $\omega \in \Omega$, and \emph{positive} if $x(\omega)
\geq 0$ for all $\omega \in \Omega$.  The conjugate and absolute value
of $x$ are defined as usual by $\overline{x}(\omega) :=
\overline{x(\omega)}$ and $\abs{x}(\omega) := \abs{x(\omega)}$ for all
$\omega \in \Omega$.  We require from $X$ that
\begin{enumerate}
\item $x \in X \Leftrightarrow \overline{x} \in X$ as well as $x \in
  X \Leftrightarrow \abs{x} \in X$;
 \item $\norm{\overline{x}} = \norm{\abs{x}} = \norm{x}$ for all $x
   \in X$; and
\item $\norm{x} \leq \norm{y}$ if $ 0 \leq x(\omega) \leq y(\omega)$
  for all $\omega \in \Omega$.
\end{enumerate}
It follows from these properties that each $x \in X$ has a
decomposition $x = x_{11} - x_{12} + i (x_{21} - x_{22})$ where all
$x_{j,k} \in X$ are positive and satisfy $\norm{x_{j,k}} \leq
\norm{x}$.

The conjugate of an operator $T \in \BLO(X)$ is defined by
$\overline{T}x := \overline{T \overline{x}}$ for $x \in X$, and the
operator is called \emph{[positive] real} if it maps [positive] real
vectors in $X$ to [positive] real vectors. If $-T$ is positive, then
$T$ is, of course, \emph{negative}. All products of real operators are
real, and the same holds for positive operators.  Squares of negative
operators are positive but the same does not generally hold for
general real operators: e.g., $T^2 = -I$ if $T = \sbm{0 & 1 \\ -1 &
  0}$.  We always have $\overline{T x}=\overline{T}\overline{x}$,
$\overline{\overline{T}} = T$ and $\norm{\overline{T}} = \norm{T}$ but
$\norm{\overline{T}T} \neq \norm{T}^2$ for positive real $T = \sbm{0 &
  1 \\ 0 & 0}$.  The operator conjugation is generally not an
involution in $\BLO(X)$ since $\overline{T
  S}=\overline{T}\,\overline{S} \neq \overline{S} \, \overline{T}$
unless $[T,S] = 0$.  If $Q\in \BLO(X)$ is a quasinilpotent real
operator, we have clearly $\overline{T(z)} = T(\overline{z})$,
$\overline{T_z} = T_{\overline{z}}$, $\overline{e^{tz Q}} = e^{t
  \overline{z} Q}$ for $t \in \R$ which implies that all the
microspectral sets $\mathcal A_Q$, $\mathcal B_Q$, $\mathcal G_Q$,
$\mathcal R_Q$, $\mathcal T_Q$, and $\mathcal T_Q^{1/2}$ are conjugate
symmetric. It is this symmetry that makes it possible to use
Phragmen--Lindel\"of theorem for excluding points in
\eqref{InclusionEquivalence}.
\begin{theorem}
  \label{PhragmenLindelofMainThm}
  Let $Q \in \BLO(X)$, $Q \neq 0$, be a quasinilpotent operator such
  that $zQ$ is a positive real operator (in the sense described above)
  for some $z \in \C$, $z \neq 0$.  If $\mathcal G_Q \neq \{ 0 \}$,
  then $\mathcal T_Q = \mathcal B_Q^{\circ} \cup \{ 0 \} \subset
  \mathcal B_Q$ and $\mathcal T_Q^{1/2} \subset \mathcal A_Q$.
\end{theorem}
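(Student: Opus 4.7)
The plan is to reduce to the case where $Q$ itself is positive real, by replacing $Q$ with $zQ$ for the $z$ given by the hypothesis; this scales each microspectral set by $z^{-1}$ and translates the hypothesis and conclusion equivariantly. Positivity then yields the pointwise estimate $|P(Q)f|\leq P^{+}(Q)|f|$ for every polynomial $P\in\C[\zeta]$ (with $P^{+}$ the polynomial whose coefficients are the moduli of those of $P$), and in particular $\norm{e^{wQ}f}\leq\norm{e^{|w|Q}|f|}$ and $\norm{(I+wQ)^{k}f}\leq\norm{(I+|w|Q)^{k}|f|}$. Combining the pointwise bound $e^{tQ}\geq(t^{k}/k!)Q^{k}$ (valid on positive vectors for all $t,k\geq 0$) with the identity $\norm{Q^{k}}=\sup_{f\geq 0,\,\norm{f}=1}\norm{Q^{k}f}$ (also a consequence of positivity), I obtain $\norm{e^{tQ}}\geq(t^{k}/k!)\norm{Q^{k}}$, so $\R_{+}\cap\mathcal{G}_{Q}=\{0\}$ unless $Q=0$. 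Together with conjugate symmetry (from $Q$ being real), convexity and the full-ray property of $\mathcal{G}_{Q}$, the hypothesis $\mathcal{G}_{Q}\neq\{0\}$ then forces $\R_{-}\subset\mathcal{G}_{Q}$, and claim \eqref{InclusionThmClaim4} of Theorem \ref{InclusionThm} yields $\{z\in\C:\Re z<0\}=\R_{-}\cdot\C_{+}\subset\mathcal{A}_{Q}$.

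Both conclusions reduce to the single inclusion $\mathcal{T}_{Q}^{1/2}\subset\mathcal{A}_{Q}$: granted it, claim \eqref{InclusionThmClaim2b} of Theorem \ref{InclusionThm} together with Theorem \ref{InteriorBQoThm} gives $\mathcal{T}_{Q}\subset\mathcal{A}_{Q}\cap\mathcal{T}_{Q}=\mathcal{R}_{Q}=\mathcal{B}_{Q}^{\circ}\cup\{0\}$, while $\mathcal{R}_{Q}\subset\mathcal{T}_{Q}$ is automatic. I would then argue by contradiction: pick $z_{0}\in\mathcal{T}_{Q}^{1/2}\setminus\mathcal{A}_{Q}$ with $z_{0}\neq 0$. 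The structural analysis above places $z_{0}$ in the closed right half-plane; Proposition \ref{GelfandHilleProp} combined with $\R_{+}\subset-\mathcal{G}_{Q}$ excludes $z_{0}\in\R_{+}$; and conjugate symmetry lets me take $\Im z_{0}>0$. In the generic case $\Re z_{0}>0$, the triangle with vertices $-1\in\mathcal{G}_{Q}$ and $z_{0},\bar z_{0}\in\mathcal{T}_{Q}^{1/2}$ lies in $\mathop{\rm conv}(\mathcal{G}_{Q}\cup\mathcal{T}_{Q}^{1/2})$ and strictly contains the origin (the convex-combination weights $\alpha=\Re z_{0}/(1+\Re z_{0})$ and $\beta=\gamma=1/(2(1+\Re z_{0}))$ are all strictly positive); Proposition \ref{PhragmenLindelofMainProp} then forces $Q=0$, contradicting $Q\neq 0$.

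The main obstacle I anticipate is the borderline case $\Re z_{0}=0$, where the triangle above degenerates with the origin on the side from $z_{0}$ to $\bar z_{0}$. I would close it in two sub-cases. If $\mathcal{G}_{Q}$ has positive opening, pick a conjugate pair $w,\bar w\in\mathcal{G}_{Q}$ with $\Im w\neq 0$; then $\mathop{\rm conv}(w,\bar w,z_{0},\bar z_{0})$ captures the origin interiorly and Proposition \ref{PhragmenLindelofMainProp} applies. If $\mathcal{G}_{Q}$ is exactly the ray $\R_{-}$, I apply Proposition \ref{PhragmenLindelofThm} to $g(s)=e^{sQ}$ on each of the two open sectors of angle $\pi/2$ bounded by $\R_{-}$ and $i\R_{\pm}$ (with boundary data uniformly bounded on $\R_{-}$ and polynomially bounded on $i\R_{\pm}$ through the Tauberian estimate \eqref{TauberianGrowthboundsClaim1EqB}) to obtain $\norm{g(s)}\leq C(|s|^{1/2}+1)$ throughout $\{\Re s\leq 0\}$, and then try to promote this to a global polynomial bound by a further Phragm\'en--Lindel\"of step in the right half-plane exploiting the subexponential growth of $g$ inherited from the positivity and $\R_{-}\subset\mathcal{G}_{Q}$; Cauchy's estimates then force $g$ constant and hence $Q=0$, again a contradiction. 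The sharpness required in this last sub-case---in particular establishing a growth exponent strictly below $1$ for $g$ on the right half-plane---is the heart of the argument and its hardest step.
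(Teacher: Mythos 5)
Your reduction to the single inclusion $\mathcal T_Q^{1/2}\subset\mathcal A_Q$, the normalization to positive real $Q$, the structural observations forcing $-\clos{\R}_+\subset\mathcal G_Q\subset\{\Re z<0\}\cup\{0\}$ and $\{\Re z<0\}\subset\mathcal A_Q$, and the handling of the generic case $\Re z_0>0$ via Proposition \ref{PhragmenLindelofMainProp} all match the paper's proof in substance and are correct. The paper splits into exactly the same two cases (modulo the paper's opposite sign convention of taking $Q$ negative real), and the generic three-ray Phragm\'en--Lindel\"of contradiction is the same. The problems are both in the borderline case $\Re z_0=0$.

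In your first sub-case ($\mathcal G_Q$ of positive opening, i.e.\ $\mathcal R_Q\neq\{0\}$) the convex-hull argument fails outright: for positive real $Q\neq 0$ the whole of $\mathcal G_Q$ is contained in the closed left half-plane (if some $w\in\mathcal G_Q$ had $\Re w>0$ then, by conjugate symmetry and convexity, $\Re w=\tfrac12(w+\bar w)\in\mathcal G_Q\cap\R_+$, contradicting your own estimate), so all four vertices $w,\bar w,z_0,\bar z_0$ have $\Re\leq0$ and the origin lies on the \emph{boundary} of $\mathop{conv}(w,\bar w,z_0,\bar z_0)$ --- specifically on the degenerate side $[z_0,\bar z_0]$ --- not in its interior, and Proposition \ref{PhragmenLindelofMainProp} gives nothing. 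This sub-case can in fact be closed without any new Phragm\'en--Lindel\"of: when $\mathcal R_Q\neq\{0\}$, Theorem \ref{InteriorBQoThm} and claim \eqref{InclusionThmClaim4} of Theorem \ref{InclusionThm} give $\mathcal A_Q\supset\mathcal R_Q\C_+$, an open sector of half-angle strictly exceeding $\pi/2$ around $-\R_+$, which already contains $i\R\setminus\{0\}$; so $z_0\in\mathcal A_Q$ immediately. (The paper simply runs the same three-ray argument as in the generic case here.)

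The second sub-case ($\mathcal G_Q=-\clos{\R}_+$) is the genuine gap, and your own description of it as the ``heart of the argument'' that you have not established is accurate. The proposed promotion from a square-root bound on $\{\Re s\leq 0\}$ to a global polynomial bound cannot be done by another Phragm\'en--Lindel\"of step: $g(t)=e^{tQ}$ is only known to be of exponential type with $\omega=1$ in the sense of \eqref{PhragmenLindelofThmEq1}, and Proposition \ref{PhragmenLindelofThm} explicitly excludes the half-plane $2\theta=\pi=\pi/\omega$, so there is no sub-exponential ``$\omega<1$'' bound on $\clos{\C}_+$ to leverage. The paper closes this case by an essentially different mechanism that uses positivity a second time. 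From $\pm iy\in\mathcal T_Q^{1/2}$ one deduces $\norm{(I+y^2Q^2)^k}\lesssim k$; since $Q':=y^2Q^2$ is \emph{positive} real, the pointwise domination $\abs{(I+e^{i\theta}Q')^k x}\leq(I+Q')^k x$ for positive $x$ (together with the four-positive-part decomposition of a general $x$) upgrades this to $\norm{(I+e^{i\theta}Q')^k}\lesssim k$ \emph{uniformly in} $\theta\in[-\pi,\pi)$, hence $\norm{e^{wQ'}}\lesssim 1+\abs{w}$ for all $w\in\C$; Cauchy's estimates on circles of radius $r\to\infty$ then force $Q'$, and hence $Q$, to be nilpotent, and Proposition \ref{AlgebraicOperatorProp} gives $\mathcal T_Q^{1/2}=\{0\}$, a contradiction. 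This transfer of a power-growth estimate along a single ray to all rays, made possible by the positivity of $Q^2$, is the idea your sketch is missing, and without it the argument does not close.
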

\noindent The inclusion $\mathcal T_Q^{1/2} \subset \mathcal A_Q$ is
strict if $\mathcal R_Q \neq \{ 0 \}$ since  $\mathop{conv}(\mathcal G_Q \cup
\mathcal T_Q^{1/2}) \subset e^{i \phi} \clos{\C}_+$ for some half
plane $e^{i \phi} \clos{\C}_+ \subset \mathcal A_Q$; see Proposition
\ref{PhragmenLindelofMainProp}.
If $\mathcal B_Q = \mathcal G_Q = \mathcal R_Q = \{ 0 \}$, then we
only know (by the same proposition) that $\mathcal T_Q^{1/2}$ is
contained in some closed half plane whose boundary contains the
origin.
\begin{proof}
  We may assume without loss of generality that $Q$ itself is a
  negative real operator (i.e., $z = - 1$ in the statement of this
  theorem), and then all the sets $\mathcal A_Q$, $\mathcal B_Q$,
  $\mathcal G_Q$, $\mathcal R_Q$, $\mathcal T_Q$, and $\mathcal
  T_Q^{1/2}$ are conjugate symmetric (by the real operator property),
  and $\mathcal G_Q \subset \C_+ \cup \{ 0 \}$ (by negativity).  We
  divide the proof into two cases depending whether $\mathcal R_Q \neq
  \{ 0 \}$ or $\mathcal R_Q = \{ 0 \}$ (but $\mathcal G_Q \neq \{ 0
  \}$).  By claim \eqref{InclusionThmClaim2} of Theorem
  \ref{InclusionThm}, Theorem \ref{InteriorBQoThm}, and Proposition
  \ref{InterioInclusionProp}, to prove $\mathcal T_Q = \mathcal
  B_Q^{\circ} \cup \{ 0 \}$ it is enough to show that $\mathcal T_Q
  \subset \mathcal B_Q$.  By claims \eqref{InclusionThmClaim2},
  \eqref{InclusionThmClaim2b}, and \eqref{InclusionThmClaim3} of
  Theorem \ref{InclusionThm}, it is enough to just show that $\mathcal
  T^{1/2}_Q \subset \mathcal A_Q$.

  Case $\mathcal R_Q \neq \{ 0 \}$: Now $\mathcal R_Q =
  \Sector{\theta'} \cup \{ 0 \}$ for the Ritt angle $\theta' > 0$ and
  $\Sector{\pi/2 + \theta'} \subset \mathcal A_Q$ (see claim
  \eqref{InclusionThmClaim4} of Theorem \ref{InclusionThm}).  We
  conclude from this by simple geometry that
  \begin{equation*}
    \mathcal A_Q^c  \setminus (- \mathcal G_Q )
    \subset e^{\tfrac{3 \pi i}{4}} \clos{\Sector{\pi/4 -\theta'}} \cup
    e^{-\tfrac{3 \pi i}{4}} \clos{\Sector{\pi/4 -\theta'}}. 
  \end{equation*}
  For contradiction, suppose that $\mathcal T^{1/2}_Q \subset \mathcal
  A_Q$ does not hold. Then by \eqref{InclusionEquivalence} and
  conjugate symmetry of $\mathcal T^{1/2}_Q$ there are points $re^{i
    \phi}, re^{-i \phi} \in \mathcal T_Q$ where $r > 0$ and $\phi \in
  [\pi/2 + \theta', \pi - \theta']$. By claim
  \eqref{TauberianGrowthboundsClaim2} of Proposition
  \ref{TauberianGrowthbounds}, the entire function $g(t) = e^{tQ}$
  grows at most like a square root on the rays $e^{i \phi} \R_+$ and
  $e^{- i \phi} \R_+$.  Also $\overline{\R}_+ \subset \mathcal G_Q$ by
  Theorem \ref{SemigroupInteriorThm}, and $g$ is bounded on this
  ray. These three rays divide the whole $\C$ into three sectors whose
  central angles are strictly less than $\pi$.  Using the same
  argument as in the proof of Proposition
  \ref{PhragmenLindelofMainProp}, we conclude the contradiction $Q =
  0$. Thus $\mathcal T^{1/2}_Q \subset \mathcal A_Q$ follows as
  claimed.
  
  Case $\mathcal R_Q = \{ 0 \}$ but $\mathcal G_Q \neq \{ 0 \}$:
  Proposition \ref{TrivialitiesProp} and Theorem
  \ref{SemigroupInteriorThm} imply that $\mathcal G_Q = e^{i \phi}
  \clos{\R}_+$ for some $\phi \in [-\pi, \pi)$.  Hence $\mathcal
G_Q = \clos{\R}_+$ and $\C_+ \subset \mathcal A_Q$ because $Q$ is
negative real. Let us first prove that $\mathcal T_Q^{1/2} \subset
\clos{\C}_+$.

  For contradiction, suppose that $re^{i \phi} \in \mathcal T_Q^{1/2}$
  with $\phi \in (\pi/2,\pi)$; note that $\phi = \pi$ is excluded by
  Proposition \ref{GelfandHilleProp}. By conjugate symmetry, also
  $re^{-i \phi} \in \mathcal T_Q^{1/2}$. Now, the function $g(t) =
  e^{tQ}$ grows at most like a square root on the rays $e^{\pm i \phi}
  \R_+$by claim \eqref{TauberianGrowthboundsClaim1} of Proposition
  \ref{TauberianGrowthbounds}.  Since the function $g$ is bounded on
  $\R_+ \subset \mathcal G_Q$, it follows that $Q = 0$ just like in
  the first part of this proof. This contradiction proves $\mathcal
  T_Q^{1/2} \subset \clos{\C}_+ \subset \overline{\mathcal A_Q}$.

  To conclude the proof, it remains to show that $\mathcal T_Q^{1/2}
  \cup i \R = \{ 0 \}$. Suppose not, meaning that we should have $\pm
  i y \in \mathcal T_Q^{1/2}$ for $y > 0$ by the conjugate symmetry of
  $\mathcal T_Q^{1/2}$.  Clearly there exists $K' > 0$ such that
  $\norm{(1 \pm i y Q)^k} \leq 1 + K' k^{1/2} $ for all $k \geq 0$,
  and thus $\norm{(1 + y^2 Q^2)^k} =\norm{(1 + i y Q)^k (1 - i y Q)^k}
  \leq 1 + (2K'+1) k \leq K (k + 1)$ for all $k \geq 0$ with $K :=
  2K'+1$.
  
  Since $Q$ is a \emph{negative} real operator, the operator $Q' := y^2 Q^2$
  is positive real.  If $x \in X$ is a positive vector, then $\abs{(1
    + e^{i \theta} Q')^k x} (\omega) \leq [(1 + Q')^k x] (\omega)$ for
  all $\omega \in \Omega$ and $\theta \in [-\pi, \pi)$, and thus
    $\norm{(1 + e^{i \theta} Q')^k x} \leq \norm{(1 + Q')^k
      x}$. Presenting any $x \in X$ in terms of four positive $x_{j,k}
    \in X$ satisfying $\norm{x_{j,k}} \leq \norm{x}$ for $j,k = 1,2$,
    we obtain
    \begin{equation*}
      \norm{(1 + e^{i \theta} Q')^k} \leq 4 \norm{(1 + Q')^k} \leq 4 K(k +
      1) \text{ for all } \theta \in [\pi, \pi) \text{ and } k \geq 0.
    \end{equation*}
    Using this estimate on power series of the exponential functions
    gives $\norm{e^{t (1 + e^{i \theta} Q')}} \leq 4 K (1 + t)e^t$ for
    $t \geq 0$, and hence $\norm{e^{t Q'}} \leq 4 K (1 + \abs{t})$ for
    all $t \in \C$. Since $\frac{1}{2 \pi i} \int{e^{t Q'} t^{-k + 1}
      \, dt} = Q'^k/k!$ (where the integration is around any circle $r
    \T$ for $r > 0$), we obtain the estimate $\norm{Q'^k}/k! = 4 K (r
    + 1) r^{-k+2}$ for all $r > 0$. Putting $k = 4$ and letting $r \to
    \infty$ gives $Q^8 = 0$.  We conclude by Proposition
    \ref{AlgebraicOperatorProp} that $\mathcal T_Q^{1/2} = \{ 0 \}$
    which is a contradiction against $y > 0$.
\end{proof}
Even though we require from $X$ that its elements are functions
defined on all of $\Omega$, it is not difficult to extend the
definition of $X$ to spaces like $L^p(\Omega;\C)$, $1 \leq p \leq
\infty$ where $\Omega \subset R^n$ is equipped with the Lebesgue
measure. Of course, then the partial ordering structure is defined
only almost everywhere. This leads to Banach lattices but we leave
such generalizations to the reader. Another way of doing this is to
consider first the vector space $C(\Omega;\C)$ equipped with the
$L^p$-norm and then proceeding by a density argument. With this
extension, we see that the results of this section can be applied to
Riemann--Liouville operators $V^{\alpha}$ (that are positive real) as
introduced in Remark \ref{VolterraRemark}.

\begin{proposition}
  \label{PhragmenLindelofAlternativeProp}
  Let $Q \in \BLO(X)$ be a quasinilpotent operator such
  that $zQ$ is a positive real operator for some $z \in \C$, $z \neq
  0$.  If $\mathcal R_Q \neq \{ 0 \}$, then at least one of the
  following holds: {\rm(i)} $\mathcal T_Q^{1/2} \subset \clos{\mathcal
    G_Q}$, or {\rm(ii)} $\mathcal B_Q = \mathcal G_Q$.
\end{proposition}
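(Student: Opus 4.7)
The plan is to argue by contradiction: I assume both (i) and (ii) fail and then show that (ii) must hold after all. As in the proof of Theorem \ref{PhragmenLindelofMainThm}, I normalise so that $Q$ itself is negative real, whence all the microspectral sets are conjugate symmetric and $\mathcal G_Q \subset \clos{\C}_+ \cup \{0\}$. Let $\theta'$ denote the Ritt angle of $Q$, so that by conjugate symmetry $\mathcal R_Q = \Sector{\theta'} \cup \{0\}$. Because $\mathcal G_Q$ is a convex, conjugate-symmetric sector made of full rays with interior $\Sector{\theta'}$, it is either $\Sector{\theta'} \cup \{0\}$ or $\clos{\Sector{\theta'}}$. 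In the first case the sandwich $\mathcal R_Q \subset \mathcal B_Q \subset \mathcal G_Q = \mathcal R_Q$ collapses to $\mathcal B_Q = \mathcal G_Q$, giving (ii); the only case to handle is $\mathcal G_Q = \clos{\Sector{\theta'}}$.

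Assume then $\mathcal G_Q = \clos{\Sector{\theta'}}$ and that (i) fails. Conjugate symmetry of $\mathcal T_Q^{1/2}$ supplies $r > 0$ and $\phi \in (\theta', \pi)$ with $re^{\pm i\phi} \in \mathcal T_Q^{1/2}$; the extreme value $\phi = \pi$ is excluded because $r \in \R_+ \subset \mathcal G_Q$ combined with $-r \in \mathcal T_Q^{1/2}$ would force $Q = 0$ by Proposition \ref{GelfandHilleProp}. The key move is to transport these bad points onto the boundary ray $e^{i\theta'}\R_+$ by interpolation with the Ritt set: by Proposition \ref{RittWeakTauberianInterpProp}, for every $\theta'' \in [0, \theta')$ and every $R > 0$ the segment joining $re^{i\phi}$ and $Re^{i\theta''}$ lies entirely in $\mathcal T_Q^{1/2}$, and since its endpoints straddle the argument $\theta'$ this segment meets $e^{i\theta'}\R_+$ in a unique point $s(R,\theta'')e^{i\theta'}$.

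A short trigonometric computation yields
\begin{equation*}
  s(R,\theta'') = \frac{rR\sin(\phi-\theta'')}{r\sin(\phi-\theta') + R\sin(\theta'-\theta'')},
\end{equation*}
which increases in $R$ with limit $r\sin(\phi-\theta'')/\sin(\theta'-\theta'')$; this limit tends to $+\infty$ as $\theta'' \to (\theta')^-$. Hence $s(R,\theta'')$ can be made arbitrarily large, and star-likeness of $\mathcal T_Q^{1/2}$ (the remark after Proposition \ref{RittWeakTauberianInterpProp}) delivers $e^{i\theta'}\R_+ \subset \mathcal T_Q^{1/2}$; conjugate symmetry gives $e^{-i\theta'}\R_+ \subset \mathcal T_Q^{1/2}$ as well. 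Because both boundary rays already sit in $\mathcal G_Q$ in the case under consideration, claim \eqref{InclusionThmClaim3} of Theorem \ref{InclusionThm} yields $e^{\pm i\theta'}\R_+ \subset \mathcal G_Q \cap \mathcal T_Q^{1/2} \subset \mathcal B_Q$, and together with $\mathcal B_Q \subset \mathcal G_Q = \clos{\Sector{\theta'}}$ this forces $\mathcal B_Q = \mathcal G_Q$, i.e.\ (ii).

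The main obstacle is the geometric observation that the crossing magnitudes $s(R,\theta'')$ can be driven to infinity; the order of the double limit matters, since for fixed $\theta''$ the map $R \mapsto s(R,\theta'')$ saturates at a finite value and only the subsequent limit $\theta'' \to (\theta')^-$ produces unbounded magnitudes. Keeping $\theta'' \geq 0$ is also needed, so that the interpolation segment actually crosses $e^{i\theta'}\R_+$ rather than the conjugate boundary ray.
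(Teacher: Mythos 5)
Your proof is correct and follows essentially the same route as the paper: the paper also interpolates between a hypothetical point of $\mathcal T_Q^{1/2} \setminus \clos{\mathcal G_Q}$ (and its conjugate) and the Ritt sector via Proposition \ref{RittWeakTauberianInterpProp} to obtain $\partial \mathcal G_Q \subset \mathcal T_Q^{1/2}$, then invokes claims \eqref{InclusionThmClaim1} and \eqref{InclusionThmClaim3} of Theorem \ref{InclusionThm} (i.e., Corollary \ref{InclusionThmCor}) to get $\mathcal B_Q = \mathcal G_Q$. Your preliminary case split on whether $\mathcal G_Q$ contains its boundary rays and the explicit computation of the crossing magnitude $s(R,\theta'')$ are details the paper leaves implicit, but the underlying argument is the same.
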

\noindent By Corollary \ref{InclusionThmCor} we have either
$\mathcal T_Q^{1/2} \subset \clos{\mathcal G_Q}$ or $\mathcal G_Q
\subset \mathcal T_Q^{1/2}$ for such operators.
\begin{proof}
Without loss of generality assume that $Q$ is a negative real
operator.  Suppose that $\mathcal R_Q \neq \{ 0 \}$ but $\mathcal
T_Q^{1/2} \subset \clos{\mathcal G_Q}$ does not hold.  Then there
exists $z \in \C$ such that $z, \overline{z} \in \mathcal T_Q^{1/2}
\setminus \clos{\mathcal G_Q}$.  By interpolating between $z$,
$\overline{z}$, and the points of the sector $\mathcal R_Q$ using
Proposition \ref{RittWeakTauberianInterpProp}, we conclude that
$\partial \mathcal G_Q \subset \mathcal T_Q^{1/2}$; thus
$\clos{\mathcal G_Q} \subset \mathcal T_Q^{1/2}$ since $\mathcal
G_Q^{\circ} = \mathcal B_Q^{\circ} \subset \mathcal T_Q \subset
\mathcal T_Q^{1/2}$ by Proposition \ref{InterioInclusionProp} and
Theorem \ref{OpenInteriorCLosureThm}. Now $\mathcal B_Q = \mathcal
G_Q$ follows from claims \eqref{InclusionThmClaim1} and
\eqref{InclusionThmClaim3} of Theorem \ref{InclusionThm}.
\end{proof}

\subsection{Quasinilpotent operators in Schatten classes}

We proceed to compact quasinilpotent operators on a separable Hilbert
space $X$. The \emph{approximation numbers} are defined by
$\sigma_j(Q) := \inf_{\mathop{rank}{F} = j}{\norm{Q - F}}$ for $j = 0,
1, \ldots$. Clearly $\sigma_0(Q) = \norm{Q}$ and $\lim_{j \to
  \infty}{\sigma_j(Q)} = 0$ is equivalent with the compactness of $Q$.
For $p \in (0,\infty)$, the \emph{Schatten $p$-class} $S_p(X)$ is
defined by those $Q$ for which the norm
\begin{equation*}
  \norm{Q}_{S_p(X)}^p := \sum_{j \geq 0}{\sigma_j(Q)^p}
\end{equation*}
is finite. It is easy to see that $S_p(X)\subset S_{p'}(X)$ for $p <
p'$, $S_p(X)$ is a Banach space under this norm, and it has also the
ideal property $B Q \in S_p(X)$ whenever $B \in \BLO(X)$ and $Q \in
S_p(X)$. For $p = 1$ the space $S_p(X)$ is called the \emph{trace
  class}, and we have the \emph{Hilbert--Schmidt} operators for $p =
2$. The Fredholm resolvent $r(s) := (1 - sQ)^{-1}$ of quasinilpotent
$Q$ is an entire function of finite exponential type $1/p$ if $Q \in
S_p(X)$.  Indeed, we have the generalized Carleman inequality for $m
\in \N$ and $p \in (m,m + 1]$:
\begin{equation}
  \mathlabel{CarlemanInEq}
  \norm{(I - s Q)^{-1}} \leq me^{3\norm{Q}_{S_p(X)} \abs{s}^p } \quad \text{ for all } \quad s \in \C;
\end{equation}
see, e.g., \cite[Theorem 5.4]{JM:PIC}, \cite[Theorem 5.8.9]{ON:CIL},
and \cite[p. 1088--1119]{D-S:LOII}.  Using \eqref{CarlemanInEq} makes
it possible to apply Phragmen--Lindel\"of theorem on Fredholm
resolvent functions:
\begin{proposition}
\label{SchattenRittAngleProp}
  Let $Q \in \BLO(X)$, $Q \neq 0$, be a quasinilpotent operator such
  that $Q \in S_p(X)$ for some $p > 1$.  Then the Ritt angle of $Q$ satisfies
$\theta \leq \frac{\pi}{2} \left (1 - 1/p \right )$.
\end{proposition}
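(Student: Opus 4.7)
The plan is to argue by contradiction using the Phragm\'en--Lindel\"of theorem applied to the Fredholm resolvent $r(\sigma) := (1 - \sigma Q)^{-1}$, which is entire of order at most $p$ by the Carleman estimate \eqref{CarlemanInEq}. Assume the Ritt angle of $Q$ satisfies $\theta > \frac{\pi}{2}(1 - 1/p)$, and write $\mathcal R_Q = e^{i\phi_0} \Sector{\theta} \cup \{0\}$ by Theorem \ref{InteriorBQoThm}. Pick $\varepsilon > 0$ small enough that $\pi - 2\theta + 2\varepsilon < \pi/p$.

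Set $\psi_\pm := \phi_0 \pm (\theta - \varepsilon)$ and $z_\pm := e^{i\psi_\pm} \in \mathcal R_Q$. By definition \eqref{RittSet} there is a constant $C < \infty$ such that $\|(1 - s z_\pm Q)^{-1}\| \leq C$ for all $s$ with $\Re s > -1/2$. Substituting $\sigma := s z_\pm$ (and using $|z_\pm| = 1$) yields $\|r(\sigma)\| \leq C$ on both open half--planes
\[
H_\pm := \{\, \sigma \in \C \;:\; \Re(e^{-i\psi_\pm} \sigma) > -1/2 \,\}.
\]
A direct geometric check shows that $\C \setminus (H_+ \cup H_-)$ is a convex wedge contained in the open sector $S$ around direction $\phi_0 + \pi$ of central angle $\pi - 2\theta + 2\varepsilon$, and moreover each of the two boundary rays $\partial S$ already lies inside $H_+$ or $H_-$ (on the ray $\{Re^{i(\psi_+ + \pi/2)} : R>0\}$ one has $\Re(e^{-i\psi_+}\sigma) = 0 > -1/2$, and similarly for the other ray). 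Consequently $\|r(\sigma)\| \leq C$ on $\partial S$.

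Since $Q \in S_p(X)$ with $p > 1$, the generalised Carleman inequality \eqref{CarlemanInEq} yields $\|r(\sigma)\| \leq m \exp\bigl(3\|Q\|_{S_p(X)} |\sigma|^p \bigr)$ for every $\sigma \in \C$, so $r$ is of finite order at most $p$. The central angle of $S$ is strictly less than $\pi/p$, so Proposition \ref{PhragmenLindelofThm} applied (after rotation) to $r/C$ with $\omega = p$ gives $\|r(\sigma)\| \leq C$ throughout $S$. Since $H_+ \cup H_- \cup S = \C$, the entire function $r$ is bounded on $\C$, hence constant by Liouville; evaluating at $\sigma = 0$ forces $r \equiv I$, i.e.\ $\sigma Q = 0$ for every $\sigma \in \C$, giving $Q = 0$ and the desired contradiction.

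The main delicate point is the geometric bookkeeping in the second paragraph: one must verify carefully that the complement of $H_+ \cup H_-$ in $\C$ is genuinely trapped in an origin--based sector $S$ of central angle precisely $\pi - 2\theta + 2\varepsilon$, and that the two straight--line boundaries of $H_\pm$ meet the two boundary rays of $S$ in such a way that the uniform bound $C$ passes to $\partial S$. Once this is in place, applying \eqref{CarlemanInEq} and Proposition \ref{PhragmenLindelofThm} to close the argument via Liouville is routine.
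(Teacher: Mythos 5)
Your proof is correct and uses the same three ingredients as the paper (the Carleman estimate \eqref{CarlemanInEq}, Proposition \ref{PhragmenLindelofThm}, and Liouville's theorem), but the geometric decomposition is genuinely different and in fact cleaner. The paper chooses finitely many rays inside $\mathcal A_Q$, chops $\C$ into closed sectors each with central angle below $\pi/p$, and applies Phragm\'en--Lindel\"of on every one of them, using only the pointwise ray bounds that the definition of $\mathcal A_Q$ provides; as written with ``three'' sectors this is slightly imprecise, since for a Ritt angle $\theta > \pi/6$ three sectors cannot all have central angle $\le \pi - 2\theta + \epsilon$ (their angles sum to $2\pi$), and one needs to subdivide $\mathcal R_Q\C_+$ further. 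You instead read the stronger, uniform information out of $z_\pm \in \mathcal R_Q$: each of the points $z_\pm$ close to $\partial \mathcal R_Q$ bounds $r$ uniformly on the entire translated half-plane $H_\pm$, so only the single wedge $\C \setminus (H_+ \cup H_-)$ remains, and one application of Proposition \ref{PhragmenLindelofThm} on that wedge (central angle $\pi - 2\theta + 2\varepsilon < \pi/p$) finishes the argument. Your geometric bookkeeping checks out: the vertex of the wedge lies on the ray $e^{i(\phi_0 + \pi)}\R_+$, so the wedge is contained in the origin-based sector $S$ of the same central angle, and on the two boundary rays of $S$ one indeed has $\Re(e^{-i\psi_\pm}\sigma) = 0 > -1/2$, so the uniform bound from $H_\pm$ passes to $\partial S$ as required by Proposition \ref{PhragmenLindelofThm}.
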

\begin{proof}
  Denoting the Ritt angle by $\theta$, we conclude from claim
  \eqref{InclusionThmClaim4} of Theorem \ref{InclusionThm} that
  $\mathcal A_Q^c$ is contained in a closed sector $e^{i \phi}
  \clos{\Sector{\pi - 2 \theta}}$ for some $\phi \in [-\pi,\pi)$.  We
    thus have three rays in $\mathcal A_Q$ that divide $\C$ into three
    closed sectors $\Sigma_i$, $i = 1,2,3$ so that the largest of
    their central angles $\alpha$ satisfies $\alpha = \pi - 2 \theta +
    \epsilon$ where $\epsilon > 0$ can be chosen arbitrarily small.

If $Q \in S_p(X)$ for $p > 1$, then \eqref{CarlemanInEq} holds and
Proposition \ref{PhragmenLindelofThm} can be applied to $(I - s
Q)^{-1}$ on each $\Sigma_i$ separately. If $\alpha < \pi / p$, we
conclude that $(I - s Q)^{-1}$ is bounded on all of $\C$, and the
contradiction $Q = 0$ follows from Liouville's theorem. Hence $\alpha
\geq \pi / p$, and the proof is completed by letting $\epsilon \to
0+$.
\end{proof}
\begin{corollary}
\label{SchattenNoRittCor}
  Let $Q \in \BLO(X)$, $Q \neq 0$, be a quasinilpotent operator such
  that $Q \in S_p(X)$ for all $p > 1$.  Then $\mathcal R_Q = \{0
  \}$.
\end{corollary}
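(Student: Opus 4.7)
The plan is to deduce Corollary \ref{SchattenNoRittCor} directly from Proposition \ref{SchattenRittAngleProp} by sending $p \to 1^+$. Suppose, for contradiction, that $\mathcal R_Q \neq \{0\}$. Then by Theorem \ref{InteriorBQoThm} and Definition \ref{RittAngleDef}, the Ritt angle $\theta$ of $Q$ is well defined and satisfies $\theta \in (0,\pi/2]$, with $\mathcal R_Q = e^{i\phi}\Sector{\theta}\cup\{0\}$ for some $\phi \in [-\pi,\pi)$.

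By hypothesis, $Q \in S_p(X)$ for every $p > 1$, so Proposition \ref{SchattenRittAngleProp} applies for every such $p$, giving
\begin{equation*}
  \theta \leq \frac{\pi}{2}\left(1 - \frac{1}{p}\right) \quad \text{for all } p > 1.
\end{equation*}
Letting $p \to 1^+$ on the right yields $\theta \leq 0$, which contradicts $\theta > 0$. Hence the assumption $\mathcal R_Q \neq \{0\}$ is untenable, and $\mathcal R_Q = \{0\}$ follows.

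There is no real obstacle here; the argument is a one-line limit once Proposition \ref{SchattenRittAngleProp} is in hand. The only point requiring care is to rule out the case $\mathcal R_Q = \C$ (i.e., $Q = 0$), but this is excluded by the standing assumption $Q \neq 0$, which also ensures the Ritt angle, when it exists, is strictly positive via Theorem \ref{InteriorBQoThm}.
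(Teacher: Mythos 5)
Your argument is correct and is precisely the intended one: the paper states the corollary immediately after Proposition \ref{SchattenRittAngleProp} without a separate proof, because it is just the limit $p\to 1^+$ of the bound $\theta \leq \tfrac{\pi}{2}(1-1/p)$, exactly as you write. Your extra care in invoking Theorem \ref{InteriorBQoThm} and the hypothesis $Q\neq 0$ to guarantee a well-defined Ritt angle $\theta>0$ is appropriate and matches the paper's conventions.
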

Considering the Riemann--Liouville operators $Q = -V^{\alpha}$ on
$L^2(0,1)$ for $\alpha > 0$ in Remark \ref{VolterraRemark}, their
approximation number asymptotics are known to be $\sigma_j(V^{\alpha})
\approx (\pi j)^{- \alpha}$; see \cite{T-G:ASVVIO}. Hence $V^{\alpha}
\in S_p(L^2(0,1))$ if and only if $p > 1/\alpha$. In particular,
Corollary \ref{SchattenNoRittCor} implies $\mathcal R_{Q} = \{ 0
\}$ for $Q = -V^1$, and the result is sharp by \cite[p.
  137]{YL:SPSOSRRC}.


By Theorem \ref{InteriorBQoThm}, the Ritt set cannot \emph{strictly}
contain any of the half planes $e^{i \phi} \C_+ \cup \{ 0 \}$ for
$\phi \in [-\pi,\pi)$ if $Q \neq 0$.  Even the restricting case is
  also impossible unless $(I - s Q)^{-1}$ is quite pathological:
\begin{proposition}
\label{HalfPlaneRittProp}
  Let $Q \in \BLO(X)$ be a quasinilpotent operator such that $\mathcal
  R_Q = e^{i \phi} \C_+ \cup \{ 0 \}$ for some $\phi \in [-\pi,\pi)$.
    Then the Fredholm resolvent $r(s) = (I - s Q)^{-1}$ is not an
    entire function of exponential type in the sense of
    \eqref{PhragmenLindelofThmEq1}. In particular, $Q \notin S_p(X)$
    for all $p > 0$.
\end{proposition}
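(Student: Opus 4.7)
The plan is to verify the contrapositive: assuming $r(s) := (I - sQ)^{-1}$ satisfies $\|r(s)\| \le A e^{\tau |s|^\omega}$ for all $s \in \C$ with some constants $A, \tau, \omega > 0$, we derive $Q = 0$, which contradicts $\mathcal R_Q \neq \{0\}$. Without loss of generality take $\phi = 0$, so that $\mathcal R_Q = \C_+ \cup \{0\}$ and the Ritt angle equals $\pi/2$. Claim \eqref{InclusionThmClaim4} of Theorem \ref{InclusionThm} then gives $\mathcal A_Q \supset \mathcal R_Q \C_+ = \C \setminus (-\infty, 0)$; combined with continuity of $r$ at the origin, this shows that $\|r(\cdot)\|$ is bounded along every ray $e^{i\theta}\R_+$ with $\theta \in (-\pi, \pi)$.

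The geometric core of the argument is to partition $\C$ by a family of rays that stays away from the one dangerous direction $-\R_+$. Fix an odd integer $N > 2\omega$ and set $R_k := e^{2\pi i k/N}\R_+$ for $k = 0, 1, \ldots, N-1$. Oddness of $N$ keeps each $2\pi k/N$ away from $\pi$, so every $R_k \subset \mathcal A_Q$, and the finiteness of the collection yields a uniform constant $C < \infty$ with $\|r(s)\| \le C$ on $\bigcup_k R_k$. These rays split $\C$ into $N$ open sectors of central angle $2\pi/N < \pi/\omega$; rotating each such sector so that it becomes a balanced sector $\Sector{\pi/N}$ and applying Proposition \ref{PhragmenLindelofThm} to $r/C$ (the growth bound is rotation invariant, and $\|r/C\| \le 1$ holds on the boundary) gives $\|r(s)\| \le C$ throughout every sector. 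Assembling the pieces and invoking Liouville's theorem for $\BLO(X)$-valued entire functions, $r$ is constant; since $r(0) = I$, this forces $r \equiv I$ and hence $Q = 0$, contradicting $\mathcal R_Q = \C_+ \cup \{0\}$.

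For the ``in particular'' claim, if $Q \in S_p(X)$ for some $p > 0$, the inclusion $S_p \subset S_{p'}$ for $p' > p$ lets us assume $p > 1$, and the Carleman bound \eqref{CarlemanInEq} then exhibits $r$ as an entire function of exponential type in the sense of \eqref{PhragmenLindelofThmEq1}, contradicting the first part. The principal obstacle is the uncontrolled behaviour of $\|r(\cdot)\|$ on $-\R_+$; the simple device of choosing $N$ odd places this troublesome direction strictly \emph{inside} one of the sectors rather than on a dividing ray, so that Phragm\'en--Lindel\"of can absorb it for \emph{any} finite $\omega > 0$.
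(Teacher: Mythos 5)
Your proof is correct and follows essentially the same strategy as the paper: use claim (ii) of Theorem \ref{InclusionThm} to place every ray except $e^{i\phi}(-\R_+)$ inside $\mathcal A_Q$, chop $\C$ into finitely many sectors narrow enough for Proposition \ref{PhragmenLindelofThm} given the exponential order $\omega$, deduce global boundedness of $r$, and invoke Liouville to force $Q = 0$. Two small points where you diverge, both to the good: (1) the paper merely says to ``divide $\C$ into a finite number of sufficiently small sectors'' without specifying that the dividing rays must avoid the one bad direction $e^{i\phi}(-\R_+)$; your odd-$N$ construction $R_k = e^{2\pi i k/N}\R_+$ makes this explicit and airtight. (2) For the ``in particular'' clause with $p \in (0,1]$, the paper cites a separate result from \cite{JM:PIC}, whereas you simply embed $S_p \subset S_2$ and use the Carleman estimate \eqref{CarlemanInEq} at $p'=2$; since the proposition only needs \emph{some} exponential type, your simpler reduction is perfectly adequate.
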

\begin{proof}
  If $Q$ is as assumed, then $\C \setminus e^{i \theta} \R_+ \subset
  \mathcal A_Q$ for some $\theta \in [-\pi, \pi)$ by claim
    \eqref{InclusionThmClaim4} of Theorem \ref{InclusionThm}. If
    $r$ is of some bounded type, we can divide the whole of
    $\C$ into a finite number of sufficiently small sectors and use
    Proposition \ref{PhragmenLindelofThm} on each of them
    separately. Again, the boundedness of the entire function
    $r$ would follow, and hence $Q = 0$.  That $Q \notin
    S_p(X)$ follows from \eqref{CarlemanInEq} for $p > 1$, and from
    \cite[Theorem 3.8]{JM:PIC} for $p \in (0, 1]$.
\end{proof}

\section{\label{AbelSec} The Abel set $\mathcal A_Q$ revisited}

Let us start by stating that if $\mathcal R_Q \neq \{ 0 \}$, there are
sectors inside $\mathcal A_Q$ on which the Fredholm resolvent $(I -
sQ)^{-1}$ is uniformly bounded:
\begin{lemma}
\label{FredholmResolventBoundedOnSectorsLemma}
Assume that $Q \in \BLO(X)$, $Q \neq 0$, is a quasinilpotent operator
such that $\mathcal R_Q$ has a nonempty interior.  Define $E :=
\mathcal R_Q \C_+ \cap \T$, $E'' := \mathcal A_Q \cap \T$, and by $E'$
denote the (path connected) component of $E''$ that contains
$E$. Define the function $\Phi: E'' \to \R_+$ by setting
  \begin{equation} \mathlabel{RadialMaximalFunction}
    \Phi(e^{i \theta}) := \sup_{r \geq 0}{\norm{(I - r e^{i \theta}Q)^{-1}}}.
  \end{equation}
  Then $E$ and $E'$are circular intervals, $E \subset E' \subset
  E''$, and the following holds:
\begin{enumerate}
\item \label{FredholmResolventBoundedOnSectorsLemmaClaim1} For any
   closed, circular interval $K \subset E$ we have $\sup_{e^{i
      \theta} \in K}{\Phi(e^{i \theta})} < \infty$.
\item \label{FredholmResolventBoundedOnSectorsLemmaClaim2} We have
  $\sup_{e^{i \theta} \in E}{\Phi(e^{i \theta})} = \infty$.
\item \label{FredholmResolventBoundedOnSectorsLemmaClaim3} For any
  closed, circular interval $K \subset E'$, there is an open circular
  interval $W \subset K$such that $\sup_{e^{i \theta} \in
    W}{\Phi(e^{i \theta})} < \infty$.
\end{enumerate}
\end{lemma}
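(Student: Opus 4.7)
The plan is to combine the structural statements with the three claims, using the Ritt angle structure from Theorem \ref{InteriorBQoThm} and the half-plane characterisation of the Ritt set in Proposition \ref{RittSetProp}. Since $\mathcal R_Q$ has nonempty interior, Theorem \ref{InteriorBQoThm} gives $\mathcal R_Q = e^{i\phi}\Sector{\theta_0} \cup \{0\}$ for some $\phi \in [-\pi,\pi)$ and some Ritt angle $\theta_0 \in (0, \pi/2]$, so $\mathcal R_Q \C_+$ is an open sector of half-angle $\theta_0 + \pi/2$ and $E = \mathcal R_Q \C_+ \cap \T$ is an open circular interval (possibly all of $\T$ when $\theta_0 = \pi/2$). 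Claim \eqref{InclusionThmClaim4} of Theorem \ref{InclusionThm} yields $\mathcal R_Q \C_+ \subset \mathcal A_Q$, whence $E \subset E''$; since $E$ is connected it lies inside the connected component $E'$, which is itself a circular arc.

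For Claim \eqref{FredholmResolventBoundedOnSectorsLemmaClaim1}, Proposition \ref{RittSetProp} supplies $C_\zeta := \sup_{\Re w > 0}\norm{(I - w\zeta Q)^{-1}} < \infty$ for each $\zeta \in \mathcal R_Q \cap \T$. Writing any $e^{i\theta} \in E$ as $\zeta e^{i\beta}$ with $\zeta \in \mathcal R_Q \cap \T$ and $\abs{\beta} < \pi/2$, for $r > 0$ we get $re^{i\theta}Q = (re^{i\beta})\zeta Q$ with $re^{i\beta} \in \C_+$, hence $\Phi(e^{i\theta}) \leq C_\zeta$ (the case $r = 0$ being trivial). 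The open arcs $U_\zeta := \{\zeta e^{i\beta} : \abs{\beta} < \pi/2\}$ cover $E$, so a compact $K \subset E$ is covered by finitely many of them and $\sup_K \Phi$ is finite.

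For Claim \eqref{FredholmResolventBoundedOnSectorsLemmaClaim2} I would argue by contradiction: suppose $M := \sup_E \Phi < \infty$, equivalently $\sup_{s \in \mathcal R_Q \C_+}\norm{(I-sQ)^{-1}} \leq M$. Put $z_+ := e^{i(\phi + \theta_0)} \in \partial \mathcal R_Q$; for every $w \in \C_+$ one has $\arg(wz_+) \in (\phi + \theta_0 - \pi/2, \phi + \theta_0 + \pi/2) \subset (\phi - \theta_0 - \pi/2, \phi + \theta_0 + \pi/2)$, hence $wz_+ \in \mathcal R_Q \C_+$ and $\sup_{\Re w > 0}\norm{(I - wz_+Q)^{-1}} \leq M$. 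Proposition \ref{RittSetProp} then places $z_+$ in $\mathcal R_Q$, contradicting $z_+ \in \partial \mathcal R_Q \setminus \{0\}$ together with the openness of $\mathcal R_Q \setminus \{0\}$ given by Theorem \ref{InteriorBQoThm}. I expect this to be the main obstacle: one has to recognise that the uniform resolvent bound on the whole sector $\mathcal R_Q \C_+$ does more than keep the boundary rays of $\mathcal R_Q$ inside $\mathcal A_Q$ (which is already known), it actually promotes them into $\mathcal R_Q$ via the half-plane criterion and so enlarges the Ritt angle beyond $\theta_0$.

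For Claim \eqref{FredholmResolventBoundedOnSectorsLemmaClaim3} the key observation is that $\Phi \colon \T \to [0,\infty]$ is a pointwise supremum of the continuous family $e^{i\theta} \mapsto \norm{(I - re^{i\theta}Q)^{-1}}$, hence lower semicontinuous, and it is pointwise finite on $E''$ because the $\mathcal A_Q$-condition controls $r$ large while continuity at $r = 0$ handles $r$ small. Given a closed arc $K \subset E'$, the sublevel sets $K_N := \{e^{i\theta} \in K : \Phi(e^{i\theta}) \leq N\}$ are closed in $K$ and cover it, so the Baire category theorem, applied to the compact metric space $K$, produces some $N$ for which $K_N$ has nonempty relative interior; this interior contains the desired open circular interval $W$ on which $\Phi \leq N$.
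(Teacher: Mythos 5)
Your proposal is correct and follows essentially the same route as the paper: claim (i) via the Ritt property giving half-plane bounds plus a compactness/covering argument (the paper phrases this through the analytic-semigroup and Hille–Yosida language, but it is the same structural fact); claim (ii) by the contradiction that a uniform bound on $\mathcal R_Q \C_+$ would, via Proposition \ref{RittSetProp}, force a boundary ray of $\mathcal R_Q$ into $\mathcal R_Q$ itself, impossible since $\mathcal R_Q\setminus\{0\}$ is open; and claim (iii) by lower semicontinuity of $\Phi$ and Baire's category theorem.
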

\noindent Indeed, we do not know whether $\mathcal A_Q \setminus \{ 0
\}$ is always a connected set.  Proposition \ref{AbelBoundaryProp}
shows that $E' = E$ for $Q$ that satisfy a compactness assumption.
\begin{proof}
  Claim \eqref{FredholmResolventBoundedOnSectorsLemmaClaim1}: Because
  $t \mapsto e^{t Q}$ is an analytic semigroup on $\mathcal R_Q$ that
  is bounded on each closed subsector $\clos{\Sigma}$ of $\mathcal
  R_Q$, the Fredholm resolvent $(I - sQ)^{-1}$ is uniformly bounded on
  corresponding closed subsectors $\clos{\Sigma} \clos{\C}_+$ by the
  Hille--Yoshida theorem.
  
  Claim \eqref{FredholmResolventBoundedOnSectorsLemmaClaim2}: If
  $\sup_{e^{i \theta} \in E}{\Phi(e^{i \theta})} = \sup_{s \in
    \mathcal R_Q \C_+}{\norm{(I - s Q)^{-1}}} < \infty$, we would
  conclude by Proposition \ref{RittSetProp} that the bounding ray(s)
  of $\mathcal G_Q$ in $\partial \mathcal G_Q$  would belong to $\mathcal R_Q$. This is impossible
  by Theorem \ref{OpenInteriorCLosureThm}.

  Claim \eqref{FredholmResolventBoundedOnSectorsLemmaClaim3}: The
  function $\Phi$ defined by \eqref{RadialMaximalFunction} is clearly
  nonnegative and lower semicontinuous.  Define the level sets for $k
  = 1, 2, \ldots$ by $E_k := \{ \theta \in E' : \Phi(e^{i \theta})
  \leq k \}$. By lower semicontinuity, all of these sets are closed,
  and clearly $E_k \subset E_j$ for $k < j$ as well as $E' = \cup_{j
    \geq 1}{E_j}$. Defining $K_j := K \cap E_j$ we get an increasing
  family of closed sets satisfying $K = \cup_{j \geq 1}{K_j}$. Because
  $K$ is a complete metric space, there exists $j \in \N$ such that
  $K_j$ has a non-empty open interior $K_j^\circ$ by Baire's category
  theorem.  Hence, there exists an open circular interval $W \subset
  K_j^\circ$ such that $ \Phi(e^{i \theta}) \leq j$ for all $e^{i
    \theta} \in W$.  This completes the proof.
\end{proof}

The operator $Q = - V^\alpha$ for $\alpha \in (0,1)$ provides us with
an example of an operator for which $\mathcal A_Q$ is not convex:
\begin{proposition}
  Let $Q \in \BLO(X)$ be quasinilpotent operator such that $\mathcal
  R_Q \neq \{0 \}$.  Then one of the following holds: {\rm(i)} $\mathcal
  A_Q$ is not convex; {\rm(ii)} the Fredholm resolvent $r(s) := (1 -
  sQ)^{-1}$ is not of any exponential type; or {\rm(iii)} $Q = 0$.
\end{proposition}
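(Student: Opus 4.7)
The plan is to prove the contrapositive: assuming both (i) and (ii) fail --- i.e., $\mathcal A_Q$ is convex and the Fredholm resolvent $r(s) := (I - sQ)^{-1}$ satisfies a global bound $\|r(z)\| \leq A e^{\tau |z|^\omega}$ for some $A, \tau, \omega > 0$ --- I aim to conclude $Q = 0$. The argument has two parts: first, show that the hypotheses force $\mathcal A_Q = \C$, and then apply Proposition \ref{PhragmenLindelofThm} on sectors of opening $< \pi/\omega$ followed by Liouville's theorem, in much the spirit of Proposition \ref{HalfPlaneRittProp}.

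For the first part, since $\mathcal R_Q \neq \{0\}$, Theorem \ref{InteriorBQoThm} gives $\mathcal R_Q = e^{i\phi}\Sector{\theta} \cup \{0\}$ for some $\theta \in (0, \pi/2]$, and claim \eqref{InclusionThmClaim4} of Theorem \ref{InclusionThm} then yields $\mathcal A_Q \supset \mathcal R_Q\, \C_+$, which is an open sector of central angle $\pi + 2\theta > \pi$. Combined with the full-ray property of $\mathcal A_Q$ (claim \eqref{TrivialitiesPropClaim1} of Proposition \ref{TrivialitiesProp}) and the assumed convexity, $\mathcal A_Q$ is a convex cone in $\C$ whose angular footprint exceeds $\pi$. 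Such a cone necessarily contains two diametrically opposite rays --- pick any $\gamma$ strictly inside the angular interval of the sector so that $\gamma + \pi$ is also inside, which is possible exactly because the interval has width greater than $\pi$ --- and hence a full line $L := \R e^{i\gamma}$ through the origin. A standard plane convex-cone classification shows that a convex cone containing a line $L$ is either $L$ itself, a half-plane bounded by $L$, or all of $\C$; of these, only $\C$ can accommodate a sector of angle strictly greater than $\pi$. Hence $\mathcal A_Q = \C$.

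For the second part, $r$ is entire and therefore bounded on compact subsets of $\C$, while $\mathcal A_Q = \C$ gives $\limsup_{s \to +\infty}\|r(se^{i\phi})\| < \infty$ for every $\phi \in [-\pi, \pi)$; together this produces finite ray bounds $C_\phi := \sup_{s \geq 0}\|r(s e^{i\phi})\| < \infty$. Now pick $n \in \N$ large enough that $2\pi/n < \pi/\omega$ and split $\C$ into $n$ closed sectors of opening $2\pi/n$ via rays $e^{i\phi_k}\R_+$, $k=1, \ldots, n$. On each such sector $r$ is bounded by $\max(C_{\phi_k}, C_{\phi_{k+1}})$ on the two bounding rays and satisfies the global exponential bound in the interior; applying Proposition \ref{PhragmenLindelofThm} after normalisation shows $r$ is bounded on that sector, and combining the $n$ sectorial bounds makes $r$ bounded on all of $\C$. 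Liouville's theorem then forces $r$ to equal the constant $r(0) = I$, whence $(I - sQ)^{-1} \equiv I$ and $Q = 0$, contradicting the presumed nontriviality needed to have $\mathcal R_Q \neq \{0\}$.

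The only genuinely delicate ingredient is the elementary convex-geometry lemma in the first part --- everything else is a clean repetition of the Phragmen--Lindelof plus Liouville argument already exploited in the proof of Proposition \ref{HalfPlaneRittProp}.
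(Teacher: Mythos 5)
Your proof is correct and follows essentially the same route as the paper: deduce $\mathcal A_Q = \C$ from convexity together with the sector $\mathcal R_Q\,\C_+ \subset \mathcal A_Q$ of opening $>\pi$, then apply Phragmen--Lindel\"of and Liouville to force $r \equiv I$. The only cosmetic differences are that you spell out the convex-cone geometry that the paper compresses into ``$\mathop{conv}(\mathcal A_Q) = \C = \mathcal A_Q$'', and you use a fixed finite partition of $\C$ into sectors of opening $< \pi/\omega$ rather than the paper's cover-of-$\T$-by-sectors plus compactness; both are equivalent. One small slip in the last sentence: reaching $Q = 0$ is not a contradiction with $\mathcal R_Q \neq \{0\}$ (when $Q = 0$ every microspectral set equals $\C$, so $\mathcal R_Q = \C \neq \{0\}$); $Q = 0$ is simply alternative (iii), and the contrapositive is complete the moment you arrive at it---drop the final ``contradicting...'' clause.
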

\begin{proof}
  Assume that $\mathcal A_Q$ is convex, and that $r$ is of some
  exponential type. We prove that $Q = 0$.
  
  Since $\mathcal R_Q$ has a nonempty interior, we see from claim
  \eqref{InclusionThmClaim4} of Theorem \ref{InclusionThm} that
  $\mathop{conv}(\mathcal A_Q) = \C = \mathcal A_Q$. Thus the entire
  function $r$ is bounded on all rays in the sense that $\sup_{r \geq
    0}{\norm{(I - r e^{i \theta}Q)^{-1}}} < \infty$ for all $\theta
  \in [-\pi, \pi)$.  Proposition \ref{PhragmenLindelofThm} implies
    that for any $e^{i \theta} \in \T$, there is an open sector
    $\Sigma(\theta)$ with $e^{i \theta} \in \Sigma(\theta)$ on which
    $r$ is uniformly bounded by some constant $C(\theta) < \infty$.
    The sets $\Sigma(\theta) \cap \T$ are an open cover for $\T$, and
    hence there is a finite sub-cover. From this it follows that $r$
    is uniformly bounded on all of $\C$, and $Q = 0$ by Liouville's
    theorem.
\end{proof}
We have $\mathcal R_Q \C_+ \subset \mathcal A_Q$ by claim
\eqref{InclusionThmClaim4} of Theorem \ref{InclusionThm} but we cannot
exclude in Lemma \ref{FredholmResolventBoundedOnSectorsLemma} the
possibility that $\mathcal A_Q$ could be substantially larger than
$\mathcal R_Q \C_+$. If the Fredholm resolvent is of exponential type
(e.g., if $Q \in S_p(X)$ for $p > 0$), then we know that at least a
part of $\partial \mathcal A_Q$ is where one would expect:
\begin{proposition}
\label{AbelBoundaryProp}
  Let $Q \in \BLO(X)$ be a quasinilpotent operator with $\mathcal G_Q
  \neq \{ 0 \}$ and that the Fredholm resolvent $r(s) = (I - s
  Q)^{-1}$ is an entire function of exponential type.  Then $\partial
  ( \mathcal G_Q \C_+) \subset \partial \mathcal A_Q$.
\end{proposition}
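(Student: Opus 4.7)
The plan is to argue by contradiction. Assume $Q \neq 0$ (otherwise $\mathcal{G}_Q \C_+ = \C$ and the claim is vacuous), take $z_0 \in \partial(\mathcal{G}_Q \C_+)$, and suppose $z_0 \notin \partial \mathcal{A}_Q$. Since $\mathcal{G}_Q \C_+ \subset \mathcal{A}_Q$ by claim \eqref{InclusionThmClaim4} of Theorem \ref{InclusionThm}, automatically $z_0 \in \overline{\mathcal{A}_Q}$, so the assumption forces $z_0 \in \mathcal{A}_Q^\circ$. The case $z_0 = 0$ is a contradiction already: were $\mathcal{A}_Q = \C$, then $r(s) := (I-sQ)^{-1}$ would be bounded on every ray and, being entire of exponential type, a finite-cover argument based on Proposition \ref{PhragmenLindelofThm} would bound $r$ on all of $\C$, yielding $Q = 0$ by Liouville; otherwise $\mathcal{A}_Q \neq \C$ and $0 \in \partial \mathcal{A}_Q$ since $\mathcal{A}_Q$ consists of full rays.

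Suppose from now on $z_0 \neq 0$. Since $\mathcal{A}_Q$ consists of full rays and scaling by positive reals is a homeomorphism, $\mathcal{A}_Q^\circ$ also consists of full rays, and therefore contains an open angular sector $\{w : \abs{\arg w - \arg z_0} < \eta\}$ for some $\eta > 0$. WLOG $z_0$ sits on the boundary ray of $\mathcal{G}_Q \C_+$ at the larger argument; let $2 \theta$ denote the opening of $\mathcal{G}_Q$, with $\theta \in [0, \pi/2)$ (strict inequality by Proposition \ref{HalfPlaneRittProp} and the exponential-type hypothesis), so that $\mathcal{G}_Q \C_+$ has opening $\pi + 2 \theta$.

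The next step produces uniform bounds for $r$. On the interior side of $z_0 \R_+$, for any $z_1 \in \mathcal{G}_Q \setminus \{0\}$ the Hille--Yoshida estimate $\norm{r(s z_1)} \leq M(z_1)/\cos{(\arg s)}$ for $\Re s > 0$ yields a finite uniform bound on the closed subsector of $\mathcal{G}_Q \C_+$ whose upper edge is $\arg t = \arg z_0 - \varepsilon$, for each small $\varepsilon > 0$ (using a single $z_1$ on the ray when $\theta = 0$, or a pair of $z_1$'s close to the two boundary rays of $\mathcal{G}_Q$ when $\theta > 0$). On the exterior side of $z_0 \R_+$, every ray with $\arg t \in (\arg z_0, \arg z_0 + \eta)$ lies in $\mathcal{A}_Q$, so $r$ is pointwise bounded there. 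Apply the Phragm\'en--Lindel\"of Proposition \ref{PhragmenLindelofThm} (with $\omega = 1$) to the straddling sector $\{t : \arg t \in (\arg z_0 - \varepsilon, \arg z_0 + \eta/2)\}$, whose opening is less than $\pi$ and whose two boundary rays carry $r$-bounds from the sources above: $r$ is uniformly bounded on this sector. Gluing with the previously bounded interior subsector yields a single sector $S$ on which $\norm{r}$ is uniformly bounded, with opening $(\pi + 2 \theta) + (\eta/2 - \varepsilon)$, strictly greater than $\pi + 2 \theta$ whenever $\varepsilon < \eta/2$.

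If $z = e^{i\alpha}$ satisfies $z \C_+ \subset S$, then $r$ is uniformly bounded on $z \C_+$, so $z \in \mathcal{R}_Q$ by Proposition \ref{RittSetProp}. A direct computation shows that the set of such admissible $\alpha$ is an open arc extending past the upper boundary of the argument range of $\mathcal{G}_Q$, producing a point $z \in \mathcal{R}_Q \setminus \mathcal{G}_Q$ and contradicting $\mathcal{R}_Q \subset \mathcal{G}_Q$ (claims \eqref{InclusionThmClaim2} and \eqref{InclusionThmClaim1} of Theorem \ref{InclusionThm}). The main obstacle is coupling the parameters carefully: $\varepsilon$ must be chosen smaller than $\eta/2$ so that the combined opening exceeds that of $\mathcal{G}_Q \C_+$, the Phragm\'en--Lindel\"of sector must have opening less than $\pi$, and the two uniformly bounded regions must share a ray so that they glue into one sector.
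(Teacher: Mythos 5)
Your proof is correct, and it is reassuring that you arrived at a valid argument blind; but it differs from the paper's route in two interesting ways.

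First, you do not invoke Lemma~\ref{FredholmResolventBoundedOnSectorsLemma} at all. The paper's proof restricts from the outset to the case $\mathcal R_Q \neq \{0\}$ (and observes $\mathcal G_Q\C_+ = \mathcal R_Q\C_+$ under that assumption), then gets the interior bound from claim~\eqref{FredholmResolventBoundedOnSectorsLemmaClaim1} of that lemma and the contradiction from claim~\eqref{FredholmResolventBoundedOnSectorsLemmaClaim2}: the Phragm\'en--Lindel\"of extension past the boundary ray of $\mathcal R_Q\C_+$ produces a strictly larger open sector on which the radial maximal function $\Phi$ is bounded on compact sub-arcs, contradicting $\sup_E\Phi = \infty$. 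You instead reconstruct the interior bound by hand from two Hille--Yoshida estimates $\norm{r(sz_1)} \le M(z_1)/\cos(\arg s)$ at (nearly) the two boundary rays of $\mathcal G_Q$ (a single $z_1$ sufficing when $\theta=0$), glue with the PL straddling sector, and then derive the contradiction from Proposition~\ref{RittSetProp} plus $\mathcal R_Q \subset \mathcal B_Q \subset \mathcal G_Q$, rather than from the lemma's unboundedness assertion. These two contradictions are of course close cousins---claim~\eqref{FredholmResolventBoundedOnSectorsLemmaClaim2} is itself proved in the paper via Proposition~\ref{RittSetProp}---but your version is more self-contained.

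Second, your argument genuinely covers the extra case that the paper sets aside. The paper says explicitly that it proves the result only under the stronger hypothesis $\mathcal R_Q \neq \{0\}$; the case $\mathcal R_Q = \{0\}$ with $\mathcal G_Q = e^{i\phi}\clos{\R}_+$ a single ray is left implicit. Your construction (single $z_1$ on the ray, PL sector of width $\eta/2+\varepsilon$, forcing a $z$ with $z\C_+ \subset S$ and $\arg z > \phi$, hence $z \in \mathcal R_Q \setminus \mathcal G_Q$) handles it. You also treat $z_0 = 0$ separately and correctly, and the step that $\theta < \pi/2$ strictly, via Proposition~\ref{HalfPlaneRittProp} and Theorem~\ref{SemigroupInteriorThm}, is the right way to exclude the half-plane case. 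The one place to read more carefully is the tracking of $\varepsilon$ versus $\eta$: the final constraint $\varepsilon < \eta/2$ is sufficient but, as you note, not actually the binding one --- what is used is only that $S$ extends past $\arg z_0$, which the PL sector guarantees for any small $\varepsilon$; so the opening computation could have been omitted entirely.
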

\begin{proof}
We prove this under the stronger assumption $\mathcal R_Q \neq \{ 0
\}$ in which case $\mathcal G_Q \C_+ = \mathcal R_Q \C_+$ holds
because $\C_+$ is open and $\mathcal R_Q = \mathcal G_Q^\circ \cup
\{ 0 \}$. Then, by claims
\eqref{FredholmResolventBoundedOnSectorsLemmaClaim1} and
\eqref{FredholmResolventBoundedOnSectorsLemmaClaim2} of Lemma
\ref{FredholmResolventBoundedOnSectorsLemma}, the open sector
$\mathcal R_Q \C_+ \setminus \{ 0 \}$ has the following properties:
    {\rm(i)} for each closed sector $\Sigma \subset \mathcal R_Q
    \C_+$, the Fredholm resolvent $r$ is bounded on $\Sigma$, and
      {\rm(ii)} there is no larger sector than $\mathcal R_Q \C_+
      \setminus \{ 0 \}$ having the same property.

Since $\mathcal R_Q$ is a sector (see Theorem \ref{InteriorBQoThm}),
we have $\partial ( \mathcal R_Q \C_+) = e^{i \phi_1} \clos{\R_+} \cup
e^{i \phi_2} \clos{\R_+}$ for some $\phi_1, \phi_2 \in [-\pi, \pi)$.
  Suppose, for contradiction, that $ e^{i \phi_1} \clos{\R_+} \not
  \subset \partial \mathcal A_Q$. Since always $\mathcal R_Q \C_+
  \subset \mathcal A_Q$, it is impossible that $e^{i \phi_1} \R_+
  \subset \mathcal A_Q^{c} \setminus \partial \mathcal A_Q $. Hence,
  $e^{i \phi_1} \R_+ \subset \mathcal A_Q^{\circ}$ and we have $e^{i
    \phi_1} \clos{\Sector{\theta}} \subset \mathcal A_Q^{\circ} \cup
  \{ 0 \} \subset \mathcal A_Q$ for all $\theta > 0$ small enough. We
  may conclude that $r$ is bounded in $e^{i \phi_1}
  \clos{\Sector{\theta}}$ with sufficiently small $\theta > 0$ that is
  compatible with the exponential type of $r$ so that Proposition
  \ref{PhragmenLindelofThm} can be used.  Now, the set $\Sigma' :=
  e^{i \phi_1} \Sector{\theta} \cup \mathcal R_Q \C_+$ is strictly
  larger than $\mathcal R_Q \C_+ \setminus \{ 0 \}$ but it still
  satisfies condition {\rm(i)} given above.
\end{proof}

\section{\label{HilleYosidaSec} Miscellaneous observations}


We start by giving three results concerning the Ritt set $\mathcal
R_Q$.  We consider the case where $T_z$ in \eqref{OperatorFamilies} is
not only (uniformly) power-bounded as in
Proposition~\ref{HilleYosidaProp} (related to Hille--Yoshida semigroup
generator theorem) but more strongly, a Ritt operator characterized by
the resolvent estimate
\begin{equation*}
 \sup_{\xi \in 1 + \Sector{\pi /2 + \delta} } 
 {\abs{\xi - 1} \cdot \norm{\left ( \xi - T_z \right )^{-1}}} 
 < \infty.
\end{equation*}
For all $z \in \C$, define
\begin{equation} \label{HilleYosidaSecEq1}
    \quad Q_z := T_z - I  = zQ\left (1 - zQ \right )^{-1}.
\end{equation}


\begin{proposition} \label{RittHilleYosidaProp}
  Let $Q \in \BLO(X)$ be a quasinilpotent operator.  Then $T_z$ in
  \eqref{OperatorFamilies}, \eqref{HilleYosidaSecEq1} is a Ritt
  operator if and only if $z \in \mathcal R_Q$.
\end{proposition}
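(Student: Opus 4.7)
The plan is to convert the Ritt resolvent condition on $T_z$ into a condition on the Fredholm resolvent $(I - szQ)^{-1}$ via a Möbius change of variables, and then match the resulting region with those appearing in Proposition~\ref{NevanlinnaLubichProp} and Proposition~\ref{RittSetProp}. Since $T_z^{-1} = I - zQ$, I would first multiply out
\[
(\xi I - T_z)(I - zQ) = \xi(I - zQ) - I = (\xi - 1)I - \xi zQ = (\xi - 1)(I - szQ),
\qquad s := \tfrac{\xi}{\xi - 1},
\]
which yields $(\xi - T_z)^{-1} = (\xi - 1)^{-1}(I - zQ)(I - szQ)^{-1}$. The map $\xi \mapsto s$ is an involution of $\C\setminus\{1\}$ satisfying $s - 1 = 1/(\xi - 1)$, and since $w\mapsto 1/w$ preserves each balanced sector $\Sigma_\theta$ at the origin, the condition $\xi - 1 \in \Sigma_{\pi/2 + \delta}$ is equivalent to $s - 1 \in \Sigma_{\pi/2 + \delta}$. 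Moreover
\[
|\xi - 1|\cdot \|(\xi - T_z)^{-1}\| = \|(I - zQ)(I - szQ)^{-1}\|,
\]
which, since $I - zQ$ is invertible with bounded inverse (because $Q$ is quasinilpotent), is comparable to $\|(I - szQ)^{-1}\|$ up to constants depending only on $z$.

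Next I would verify the geometric inclusion $1 + \Sigma_{\pi/2 + \delta} \subset \Sigma_{\pi/2 + \delta}$ for every $\delta > 0$: if $\mu = a+ib \in \Sigma_{\pi/2 + \delta}$ with $a \geq 0$, then $1 + \mu$ lies in the open right half-plane; and if $a < 0$, the condition $|a| < |b|\tan\delta$ that comes from $\mu \in \Sigma_{\pi/2 + \delta}$ gives $|a| < 1 + |b|\tan\delta$, which is exactly $|\arg(1 + \mu)| < \pi/2 + \delta$. The reverse inclusion fails, but the complementary region $\{\Re s > 0\}\setminus(1 + \Sigma_{\pi/2 + \delta})$ is a bounded triangular set with vertex at $s = 1$, a point I will need crucially.

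For the direction $z \in \mathcal R_Q \Rightarrow T_z$ Ritt, Proposition~\ref{NevanlinnaLubichProp}(ii) supplies $\|(I - szQ)^{-1}\| \leq C_\eta$ on $\Sigma_{\pi/2 + \eta}$ for some $\eta > 0$; the inclusion above then gives the same bound on $1 + \Sigma_{\pi/2 + \eta}$, and the identity for $(\xi - T_z)^{-1}$ translates this into the sectorial Ritt estimate. For the converse, if $T_z$ is Ritt with some opening $\pi + 2\delta$, the identity gives $\|(I - szQ)^{-1}\|$ bounded on $1 + \Sigma_{\pi/2 + \delta}$. On the complementary bounded region inside $\{\Re s > 0\}$, the map $s \mapsto (I - szQ)^{-1}$ is entire ($Q$ being quasinilpotent), so it is automatically bounded on the closure of that compact set. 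Combining the two, $\|(I - szQ)^{-1}\|$ is bounded throughout $\{\Re s > 0\}$, and Proposition~\ref{RittSetProp} yields $z \in \mathcal R_Q$.

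The main subtlety I expect is bookkeeping of the Möbius geometry: since $\xi \mapsto \xi/(\xi - 1)$ mixes a shift with an inversion, one must verify carefully that the Ritt sector at $1$ for $T_z$ corresponds, after the substitution, to a region containing $\{\Re s > 0\}$ up to a bounded deficit. The elegance of the argument lies in the fact that this deficit is compact, so the entirety of $s \mapsto (I - szQ)^{-1}$, guaranteed by quasinilpotency, automatically fills in the missing bound without any further analysis.
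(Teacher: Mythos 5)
Your proof is correct and follows essentially the same route as the paper's: the operator identity $(\xi - T_z)^{-1} = (\xi - 1)^{-1}(I - zQ)(I - szQ)^{-1}$ with the M\"obius substitution $s = \xi/(\xi - 1)$, the observation that $w \mapsto 1/w$ preserves the balanced sector at the origin so that $\xi - 1 \in \Sector{\pi/2 + \delta}$ iff $s - 1 \in \Sector{\pi/2 + \delta}$, and finally a compactness argument for the bounded deficit region using the entirety of the Fredholm resolvent. You are in fact a bit more explicit than the paper at the one point where the argument requires care: for the direction $z \in \mathcal R_Q \Rightarrow T_z$ Ritt, the paper asserts the equivalence of boundedness on $1 + \Sector{\pi/2 + \delta}$ and on $\{\Re s > -1/2\}$ via the closed-triangle observation, which directly gives only one implication (the deficit $\{\Re s \geq -1/2\}\setminus(1 + \Sector{\pi/2 + \delta})$ is compact, but $1 + \Sector{\pi/2 + \delta}$ is not contained in the half plane). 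You correctly fill this in by invoking Proposition~\ref{NevanlinnaLubichProp}(ii) to get boundedness on a full sector $\Sector{\pi/2 + \eta}$ and then verifying the inclusion $1 + \Sector{\pi/2 + \eta} \subset \Sector{\pi/2 + \eta}$. The remaining differences (using $\{\Re s > 0\}$ with Proposition~\ref{RittSetProp} instead of $\{\Re s > -1/2\}$, deriving the identity by right-multiplying $\xi - T_z$ by $I - zQ$ rather than stating it directly) are cosmetic.
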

\noindent In other words, $z \in \mathcal R_{Q (I - zQ)^{-1}}$ if and
only if $1 \in \mathcal R_{Q_z}$ if and only if $z \in \mathcal
R_{Q}$.  This does not mean that $\mathcal R_{Q} = \mathcal R_{Q (I -
  zQ)^{-1}}$ for all $z$.
\begin{proof}
 For all $\xi \neq 1$ we have  the identity
\begin{equation*}
  \left ( \xi - 1 \right )\left ( \xi - T_z \right )^{-1}
= \left (1 - zQ \right ) \left ( 1 -  \frac{\xi z}{\xi - 1}Q \right )^{-1}.
\end{equation*}
Denoting $s = \xi/(\xi - 1)$ it is easy to see that $\xi \in 1 + \Sector{\pi /2
  + \delta}$ is equivalent with $s \in 1 + \Sector{\pi /2 + \delta}$.
Since $\rho(Q) = 0$ we have
\begin{equation*}
  \sup_{\xi \in 1 + \Sector{\pi /2 + \delta}} 
    {\norm{\left ( \xi - 1 \right )\left ( \xi - T_z \right )^{-1}}} < \infty \Leftrightarrow
  \sup_{s \in 1 + \Sector{\pi /2 + \delta}}  {\norm{\left ( 1 -  s zQ \right )^{-1} }} < \infty
\end{equation*}
for any $\delta > 0$.  Recall that $z \in \mathcal R_Q$ if and only if
$\sup_{\Re{s} > -1/2 } {\norm{\left ( 1 - s zQ \right )^{-1} }} <
\infty.$ Since the set $\{\Re{s} \geq -1/2 \} \setminus \left (1 +
  \Sector{\pi /2 + \delta} \right )$ is a closed triangle, and the mapping
$s \mapsto \left ( 1 - s zQ \right )^{-1}$ is continous for all
$z$, we conclude that
\begin{equation*}
  \sup_{s \in 1 + \Sector{\pi /2 + \delta}}  {\norm{\left ( 1 -  s zQ \right )^{-1} }} < \infty
  \Leftrightarrow 
  \sup_{\Re{s} > - 1/2}  {\norm{\left ( 1 -  s zQ \right )^{-1} }} < \infty.
\end{equation*}
This proves the claim.
\end{proof}

It is clear that $\sup_{k \in \N}{(k + 1) \norm{(I - T_z) T_z^k}} <
\infty$ for all $z \in \mathcal R_Q$ by \cite[Proposition
1.1]{M-N-Y:OTCFBLO} and Proposition \ref{RittHilleYosidaProp}. A similar
but weaker conclusion can be given in the larger set $\mathcal G_Q$:
\begin{proposition} \label{WeakerTauberianProp}
  Let $Q \in \BLO(X)$ be a quasinilpotent operator.  The operator
  $T_z$ in \eqref{OperatorFamilies} satisfies $\sup_{k \in \N}{\sqrt{k
      + 1} \norm{(I - T_z) T_z^k}} < \infty$ for all $z \in \mathcal
  G_Q$.
\end{proposition}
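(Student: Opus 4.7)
The plan is to reduce the Tauberian estimate to a simple moment computation by combining two classical ingredients: the identity that rewrites $(I - T_z) T_z^k$ as a telescoping difference, and the Laplace transform representation of resolvent powers.

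First I would use
\begin{equation*}
(I - T_z) T_z^k = T_z^k - T_z^{k+1} \quad \text{for all } k \geq 0,
\end{equation*}
which reduces the claim to proving $\norm{T_z^k - T_z^{k+1}} = O(1/\sqrt{k+1})$.

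Set $M := \sup_{t \geq 0}{\norm{e^{tzQ}}} < \infty$, which is finite since $z \in \mathcal G_Q$. Because $zQ$ is bounded and generates the uniformly bounded $C_0$-semigroup $S(t) := e^{tzQ}$, the standard Laplace representation of resolvent powers (applied to the generator $zQ$ at the point $\lambda = 1 > 0$) gives
\begin{equation*}
T_z^k = (I - zQ)^{-k} = \frac{1}{(k-1)!} \int_0^\infty t^{k-1} e^{-t} S(t) \, dt \quad \text{for all } k \in \N,
\end{equation*}
which in particular recovers the uniform bound $\norm{T_z^k} \leq M$ already afforded by Proposition \ref{HilleYosidaProp}. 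Subtracting the formulas for indices $k$ and $k + 1$, pulling the norm inside, and using $\norm{S(t)} \leq M$ yields
\begin{equation*}
\norm{T_z^{k+1} - T_z^k} \leq M \int_0^\infty \Bignorm{ \tfrac{t^{k}}{k!} - \tfrac{t^{k-1}}{(k-1)!}} e^{-t} \, dt = \frac{M}{k} \int_0^\infty \frac{t^{k-1}e^{-t}}{(k-1)!} \, \abs{t - k} \, dt.
\end{equation*}

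The remaining integral is the first absolute central moment of a Gamma$(k,1)$ distributed random variable $X_k$ with mean $k$ and variance $k$. By Cauchy--Schwarz applied to the density $p_k(t) := t^{k-1} e^{-t}/(k-1)!$,
\begin{equation*}
\int_0^\infty \abs{t - k} p_k(t) \, dt \leq \left( \int_0^\infty (t - k)^2 p_k(t) \, dt \right)^{1/2} = \sqrt{k}.
\end{equation*}
Combining the two displays gives $\norm{(I - T_z)T_z^k} \leq M/\sqrt{k}$, so $\sqrt{k+1} \norm{(I - T_z) T_z^k} \leq M \sqrt{2}$ for all $k \geq 1$, which proves the claim.

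This argument is essentially routine; I do not foresee a real obstacle. The conceptual observation is that the defining property of $\mathcal G_Q$ (bounded semigroup) feeds directly into the Laplace formula to control $\norm{T_z^k}$ and, more subtly, the differences $\norm{T_z^{k+1} - T_z^k}$, where the $1/\sqrt{k}$ decay rate comes out of the $\sqrt{k}$ standard deviation of the Gamma kernel against the factor $1/k$ in front. One could alternatively justify the Cauchy--Schwarz step by evaluating the first absolute moment explicitly via integration by parts, but the variance-based bound is shorter and sufficient.
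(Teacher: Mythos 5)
Your argument is correct, and it takes a genuinely different route from the paper's. The paper proves this by first forming the convex combination $R_{\alpha,z} := (1-\alpha)I + \alpha T_z$ for $\alpha \in (0,1)$, invoking the general principle that any such convexification of a power-bounded operator satisfies $\sqrt{k+1}\,\norm{(I - R_{\alpha,z})R_{\alpha,z}^k} = O(1)$ (this is the black box \cite[Theorem 4.5.3]{ON:CIL}, the same fact underlying claim (viii) of Theorem~\ref{InclusionThm}), and then transferring the estimate back to $T_z$ via the algebraic identity
\begin{equation*}
-\tfrac{1}{\alpha}\bigl(I - R_{\alpha,z}\bigr) R_{\alpha,z}^k \cdot \bigl(I - (1-\alpha)zQ\bigr)^{-k} = zQ\bigl(I - zQ\bigr)^{-k-1} = (I - T_z)T_z^k
\end{equation*}
together with the Hille--Yosida bound on $\bigl(I - (1-\alpha)zQ\bigr)^{-k}$. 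You instead go directly through the Laplace representation $T_z^k = \tfrac{1}{(k-1)!}\int_0^\infty t^{k-1}e^{-t}e^{tzQ}\,dt$ (valid since $zQ$ is bounded and $z \in \mathcal G_Q$ makes the semigroup bounded), and recognize the first difference as integrating the bounded semigroup against a kernel whose $L^1$-norm is controlled by the first absolute central moment of a Gamma$(k,1)$ law; the $\sqrt{k}/k = 1/\sqrt{k}$ decay then drops out of Cauchy--Schwarz against the variance. Your route is more self-contained and yields explicit constants ($\sqrt{2}\,M$), whereas the paper's route plugs into the general convexification lemma that it already uses elsewhere in Section~\ref{BasicSec}; both ultimately encode the same $\sqrt{k}$-scale concentration phenomenon, once via binomial coefficients near their peak and once via the Gamma kernel.
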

\begin{proof}
  Let $z \in \mathcal G_Q$ be arbitrary. Define for $\alpha \in (0,1)$
  the power-bounded operators $R_{\alpha,z} := (1 - \alpha) + \alpha
  T_z = I + \alpha Q_z$; see Proposition \ref{HilleYosidaProp} above.
  Since $I - R_{\alpha,z} = - \alpha z Q \left (1 - zQ \right )^{-1}$,
  we get by a straightforward computation
  \begin{equation} \label{WeakerTauberianPropEq1}
    - \frac{1}{\alpha} \left ( I - R_{\alpha,z} \right ) R_{\alpha,z}^k 
    \cdot \left (1 - (1 - \alpha) zQ \right )^{-k}
    = z Q \left (1 - zQ \right )^{-k-1}.
  \end{equation}
  The Hille--Yosida theorem implies that $\sup_{k \in \N}\norm{\left
      (1 - (1 - \alpha) zQ \right )^{-k}} < \infty$ because $1 -
  \alpha > 0$ and $z \in \mathcal G_Q$. Moreover, it follows from
  \cite[Theorem 4.5.3]{ON:CIL} that 
  \begin{equation} \label{WeakerTauberianPropEq2}
    \sup_{k \in \N}{\sqrt{k + 1}
      \norm{\left ( I - R_{\alpha,z} \right ) R_{\alpha,z}^k}} < \infty.  
  \end{equation} Noting that $z Q \left (1 - zQ \right )^{-k-1} = (I - T_z)
  T_z^k$, the claim follows from \eqref{WeakerTauberianPropEq1}.
\end{proof}
Note that $T_z = 1 + zQ + z^2Q^2\left (1 - zQ \right )^{-1}$
approximates the operator $T(z) = 1 + zQ$ for $z \approx 0$. If,
instead of Proposition~\ref{WeakerTauberianProp}, we had $\mathcal G_Q
\subset \mathcal T_Q^{1/2}$, then the equality $\mathcal B_Q =
\mathcal G_Q$ would follow by Corollary \ref{InclusionThmCor}.  This
is the motivation for Propositions \ref{RittHilleYosidaProp} and
\ref{WeakerTauberianProp}.

We complete this section by showing that the boundedness of the
Fredholm resolvent in small sectors implies practically nothing on the
the semigroups:
\begin{proposition} \label{AbelianImpliesNothingProp}
  There exists a quasinilpotent operator $Q \in \BLO(X)$ such that
  $\sup_{s \in \Sector{\delta}}{\norm{(I - sQ)^{-1}}} < \infty$ with some $0
  < \delta < \pi/2$ (hence, $\Sector{\delta} \subset \mathcal A_Q$) but the
  estimate
    $$\norm{e^{t Q}} \leq M_k t^k \text{ for all } t \geq 0$$
  does not hold for any $k \geq 1$ and $M_k < \infty$.
\end{proposition}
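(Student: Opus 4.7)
The plan is to exhibit $Q$ as a small rotation of the classical Volterra operator. Let $V$ denote the Volterra operator on $L^2(0,1)$, fix $\epsilon \in (0, \pi/4)$ small, and set $Q := e^{i(\pi - 2\epsilon)}V$. Since $V$ is quasinilpotent, so is $Q$.

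For the Fredholm resolvent I would start from the explicit formula $(I + sV)^{-1}f(x) = f(x) - s\int_0^x e^{-s(x-t)} f(t)\,dt$. Schur's test on the kernel $e^{-s(x-t)}$ over the triangle $\{0 \leq t \leq x \leq 1\}$ yields $\|(I + sV)^{-1}\|_{\BLO(L^2)} \leq 1 + 1/\cos(\arg s)$ for $\Re s > 0$. Writing $(I - sQ)^{-1} = (I + s e^{-2i\epsilon} V)^{-1}$, uniform boundedness follows on every $\Sector{\delta}$ with $\delta < \pi/2 - 2\epsilon$; in particular $\Sector{\delta} \subset \mathcal A_Q$ for such $\delta$.

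The main step is showing that $\|e^{tQ}\|$ is super-polynomial in $t$. I would argue by contradiction using Phragmen--Lindel\"of (Proposition~\ref{PhragmenLindelofThm}) applied to the entire function $r \mapsto e^{rV}$. The essential quantitative input is $\|V^n\|_{\BLO(L^2)} \leq C/n!$, which follows from Cauchy--Schwarz applied to the iterated Volterra kernel $(x-t)^{n-1}/(n-1)!$; this yields $\|e^{rV}\| \leq C\sum_n |r|^n/(n!)^2 \leq C' e^{2\sqrt{|r|}}$ for every $r \in \C$, so $e^{rV}$ is an entire function of order at most $1/2$. Suppose now that $\|e^{tQ}\| \leq Mt^k$ for all $t \geq 1$. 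Then $\|e^{rV}\| \leq M|r|^k$ on the ray $\arg r = \pi - 2\epsilon$, and since $V$ is a real operator, the identity $\|e^{\bar r V}\| = \|e^{rV}\|$ yields the same polynomial bound on the conjugate ray $\arg r = -(\pi - 2\epsilon)$. Applying Phragmen--Lindel\"of with $\omega = 1/2$ to the function $r \mapsto e^{rV}/(1+r)^{k+1}$ on the sector $\Sector{\pi - 2\epsilon}$ (the denominator is holomorphic there, since the sector avoids $r = -1$), the condition $2(\pi - 2\epsilon) < 2\pi = \pi/\omega$ is satisfied, and the polynomial boundary bound propagates inside, giving $\|e^{rV}\| \leq C''(1+|r|)^{k+1}$ throughout the sector, in particular on $\R_+$.

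This contradicts the explicit lower bound $\|e^{rV}\| \geq \|e^{rV}\mathbf{1}\|_{L^2} \sim e^{2\sqrt{r}}/\sqrt{r}$ as $r \to +\infty$, obtained by computing $e^{rV}\mathbf{1}(x) = \sum_n (rx)^n/(n!)^2 = I_0(2\sqrt{rx})$ for the constant function $\mathbf{1} \in L^2(0,1)$ and invoking the standard asymptotics of the modified Bessel function $I_0$. The main obstacle is the Phragmen--Lindel\"of step: Proposition~\ref{PhragmenLindelofThm} is stated with boundary bound $\leq 1$, so one must pass to a polynomial-growth variant (handled above by the auxiliary factor $(1+r)^{-k-1}$), and the order-$1/2$ bound on $e^{rV}$ crucially depends on the sharp kernel estimate $\|V^n\| \leq C/n!$; the resolvent estimate and the Bessel-function lower bound are then comparatively routine.
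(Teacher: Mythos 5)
Your proof is correct but takes a genuinely different route from the paper's, and the comparison is instructive. The paper's argument is abstract: it takes \emph{any} quasinilpotent $Q'$ with $\mathcal R_{Q'} \neq \{0\}$ (citing $Q' = -V^{\alpha}$, $\alpha\in(0,1)$, from Remark~\ref{VolterraRemark}; crucially \emph{not} $\alpha=1$, since $\mathcal R_{-V^1}=\{0\}$), normalizes the Ritt sector to $\Sector{2\delta}$, sets $Q^{\pm}:=e^{\pm i(\pi/2+\delta)}Q'$, and obtains the resolvent bound on small sectors from Proposition~\ref{NevanlinnaLubichProp} and claim (ii) of Theorem~\ref{InclusionThm}. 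The contradiction then comes from Phragmen--Lindel\"of applied to the order-$1$ entire function $t\mapsto e^{tQ'}$ on three sectors of opening $<\pi$ cut by the rays $\R_+$ and $e^{\pm i(\pi/2+\delta)}\R_+$; a polynomial bound on all of $\C$ forces $e^{tQ'}$ to be a polynomial, hence $Q'$ nilpotent, contradicting Proposition~\ref{AlgebraicOperatorProp}. You instead pick the very concrete rotation $Q=e^{i(\pi-2\epsilon)}V$ of the classical Volterra operator $V=V^1$, get the resolvent bound by hand via Schur's test on the explicit kernel, exploit the \emph{sharper} kernel bound $\|V^n\|\le 1/n!$ to show $e^{rV}$ has order $\le 1/2$ (which the paper never needs), and run Phragmen--Lindel\"of once on the near-full-plane sector $\Sector{\pi-2\epsilon}$ with $\omega=1/2$ after deflating by $(1+r)^{k+1}$; the contradiction is then the explicit Bessel-function lower bound $\|e^{rV}\mathbf{1}\|_{L^2}\asymp e^{2\sqrt r}/\mathrm{poly}(r)$ rather than the ``polynomial entire function $\Rightarrow$ nilpotent'' step. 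The trade-off: your version is self-contained and does not lean on the Ritt-set machinery or the external reference for $-V^{\alpha}$, at the cost of requiring the finer order-$1/2$ estimate specific to $V$; the paper's is shorter and generic, but relies on the earlier abstract structure theory and on knowing that some $Q'$ with $\mathcal R_{Q'}\ne\{0\}$ exists. Both are valid exhibits of the same phenomenon.
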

\noindent The operator $Q = -V^{1}$ in Remark
\ref{VolterraRemark} is sectorial so that $\sup_{s \in \Sector{\pi/2 -
    \eta}}{\norm{(I - s Q)^{-1}}} < \infty$ for all $\eta \in (0,
\pi/2)$; see, e.g., \cite{YL:SPSOSRRC}.
\begin{proof}
  By $Q'$ denote any quasinilpotent operator with $\mathcal R_{Q'}
  \neq \{0 \}$ (see Remark \ref{VolterraRemark}).  Such $Q'$ is
  never nilpotent by Proposition \ref{AlgebraicOperatorProp}, and
  without loss of generality we may assume that $\mathcal R_{Q'} =
  \Sector{2 \delta} \cup \{ 0 \}$ for $\delta > 0$.  Then by claim
  \eqref{InclusionThmClaim4} of Theorem \ref{InclusionThm} we have
  $e^{\pm i(\pi/2 + \delta)} \Sector{\delta} \subset \mathcal A_{Q'}$, and
  also $\sup_{s' \in e^{\pm i(\pi/2 + \delta)} \Sector{\delta}} {\norm{(I -
      sQ')^{-1}}} < \infty$ as can be seen by Proposition
  \ref{NevanlinnaLubichProp}. Both the operators $Q^{\pm} := e^{\pm i
    (\pi/2 + \delta)} Q'$ satisfy now the conditions of this
  proposition.

For contradiction, suppose that $\norm{e^{t Q^{\pm}}} \leq M_k t^k$
for some $k, M_k$ and all $t \geq 0$. Then we would have $\norm{e^{t
    Q'}} \leq M_k \abs{t}^k$ on the rays $e^{\pm i (\pi/2 + \delta)}
\R_+$, and also $\norm{e^{t Q'}} \leq M < \infty$ on
$\R_+$. Proposition \ref{PhragmenLindelofAlternativeProp} implies that
$\norm{e^{t Q'}} \leq M_k \abs{t}^k$ for all $t \in \C$, and hence
$e^{t Q'}$ is a polynomial. This is possible only if $Q$ is nilpotent
which is impossible by Proposition \ref{AlgebraicOperatorProp}.
\end{proof}

\section{Conclusions}

These conclusions concern the open problems that remain.

\subsection{\label{OpenProb1} Do the sets $\mathcal B_Q$ and 
$\mathcal K_Q^\infty$ consist of full rays?} In other words, do we
have $\mathcal B_Q \R_+ = \mathcal B_Q$?  We know that $\mathcal B_Q$
is convex.  Since the open interior $\mathcal B_Q^{\circ}$ is a
sector, it has the full ray property.  Thus, the question can be
rephrased whether $\left ( \partial \mathcal B_Q \cap \mathcal B_Q
\right ) \R_+ \subset \mathcal B_Q$. The same questions can be asked
about the set $\mathcal K_Q^\infty$, too.

\subsection{\label{OpenProb2} Do we always have $\mathcal B_Q = \mathcal G_Q$?} 
If this equality holds, then Problem \ref{OpenProb1} is clearly
resolved in positive. Since the convex sets $\mathcal B_Q$ and
$\mathcal G_Q$ have the same interior, the question is whether
$\partial \mathcal B_Q \cap \mathcal B_Q = \partial \mathcal G_Q \cap
\mathcal G_Q$. See Corollary \ref{InclusionThmCor} and Proposition
\ref{UniformlyBPonRayProp}.

\subsection{\label{OpenProb4} Is it possible to have $\mathcal B_Q \neq \mathcal R_Q$ or even $\mathcal G_Q \neq \mathcal
  R_Q$ when $\mathcal R_Q \neq \{ 0 \}$?} In other words, is one of
the sets $\partial \mathcal B_Q \cap \mathcal B_Q$ and $\partial
\mathcal G_Q \cap \mathcal G_Q$, or both, always empty for such $Q$?
If they are, then Problem \ref{OpenProb2} is resolved in positive for
operators with $\mathcal R_Q \neq \{ 0 \}$.

\subsection{\label{OpenProb5} Does the set $\mathcal K_Q$ consist of full rays, or is it convex?}
 It would follow from convexity that Theorem
 \ref{OpenInteriorCLosureThm} could be completed with
 $\overline{\mathcal R_Q} = \overline{\mathcal K_Q}$.

\subsection{\label{OpenProb6} Do we always have $\mathcal G_Q \cap \partial \mathcal T_Q  \subset \mathcal T_Q^{1/2}$?}
This inclusion has a flavour of a trace theorem, and there is certain
ring of truth in it. If $\partial \mathcal T_Q \subset \mathcal
T_Q^{1/2}$ holds for some $Q$ with $\mathcal R_Q \neq \{ 0 \}$, then
$\partial \mathcal G_Q \subset \mathcal T_Q^{1/2}$ since $\partial
\mathcal G_Q = \partial \mathcal B_Q = \partial \mathcal B_Q^{\circ}$
and $\mathcal A_Q \cap \mathcal T_Q = \mathcal R_Q = \partial \mathcal
B_Q^{\circ} \cup \{ 0 \}$. This would again resolve Problem
\ref{OpenProb2} in positive for operators with $\mathcal R_Q \neq \{ 0
\}$.

\subsection{\label{OpenProb3} Do we have the inclusion $\mathcal T_Q \subset \mathcal B_Q$ 
or even $\mathcal T_Q^{1/2} \subset \mathcal A_Q$ without the extra
assumptions of Section \ref{LindelofSec}?} This is a particularly deep
question, and Theorem \ref{PhragmenLindelofMainThm} gives hints what
kind of counter examples could work. Note that there is a counter
example \cite[Theorem 3.3]{K-M-O-T:PBORNE} 
for the same question without the quasinilpotency assumption.

\vspace{0.5cm}

All of these open problems seem to be resistent to the techniques we
have presented in this work. We probably need fresh, new ideas now,
and trying to produce counter examples seems like a reasonable next
step.
 
\providecommand{\bysame}{\leavevmode\hbox to3em{\hrulefill}\thinspace}
\providecommand{\MR}{\relax\ifhmode\unskip\space\fi MR }
\providecommand{\MRhref}[2]{%
  \href{http://www.ams.org/mathscinet-getitem?mr=#1}{#2}
}
\providecommand{\href}[2]{#2}


\end{document}